\newtheorem{theorem}{Theorem}[section]
\newtheorem{proposition}{Proposition}[section]
\newtheorem{lemma}{Lemma}[section]
\newtheorem{corollary}{Corollary}[section]
\newtheorem{remark}{Remark}[section]
\newcommand{\Complex}{ \mathbb{C} }
\newcommand{\Fourier}{ \mathcal{F}}
\newcommand{\FourierInverse}{ \mathcal{F}^{ - 1 } }
\newcommand{\Integer}{ \mathbb{Z} }
\newcommand{\Real}{ \mathbb{R} }
\newcommand{\Torus}{ \mathbb{T} }
\begin{document}

\title[Article]{Well-Posedness of the Cauchy Problem for One-Dimensional Nonlinear Diffusion Equations with Dynamic and Fourth-Type Boundary Conditions in the $L^p$–$L^q$ Maximal Regularity Setting}


\author[1]{\fnm{Ken} \sur{Furukawa}}\email{furukawa@sci.u-toyama.ac.jp}

\affil*[1]{\orgdiv{Faculty of Science, Academic Assembly}, \orgname{University of Toyama}, \orgaddress{\street{Gofuku 3190}, \city{Toyama}, \state{Toyama}, \postcode{930-8555}, \country{JAPAN}}}

\abstract{
This paper addresses the local well-posedness of the Cauchy problem for a one-dimensional diffusion equation equipped with a dynamic boundary condition and an additional boundary condition that renders the one-dimensional Laplace operator self-adjoint.
The equation serves as a model for describing filtration in aquaria, originally introduced by the author and Kitahata.
The boundary condition treated in this work differs from classical types such as Dirichlet, Neumann, and Robin conditions; we refer to it as the fourth or FK-type boundary condition.
The boundary condition is designed to capture interactions between the two boundaries in the context of filtration.
The framework for establishing well-posedness is based on $L^p$-$L^q$ maximal regularity classes. 
The results of this paper are applicable to equations with a wide range of nonlinearities, including those of the Lotka–Volterra type.
}
\keywords{Diffusion equation, dynamic boundary condition, filtration model, fourth boundary condition, well-posedness}
\maketitle

\section{Introduction}
This paper addresses the well-posedness of the Cauchy problem for the reaction-diffusion equations with the dynamic boundary condition
\begin{equation} \label{eq_filter_clogging}
    \begin{split}
        \begin{aligned}[t]
            &\partial_t v - \partial_x^2 v
            = f_v(v, \sigma) + g_v
            & x \in I,
            & \, t >0,\\
            &B_1(\theta(\sigma); v)
            = 0, \quad 
            B_2(\theta(\sigma); v)
            =0,
            & x \in \partial I,
            & \, t >0,\\
            &\frac{d\sigma}{dt}
            = f_\sigma(v, \sigma) + g_\sigma,
            & 
            & \, t >0,\\
        \end{aligned}
    \end{split}
\end{equation}
The domain is the interval $I = (-1,1)$ and the boundary is the left and right edges $\partial I = \{ x = \pm 1\}$.
The functions in the domain $v : [0, T) \times I \rightarrow \Real$ and the function on the boundary $\sigma : [0, T) \rightarrow \Real$ are unknowns.
We identify the two edges, but not periodic, so $\sigma$ can be a function defined only on the right edge.
The functions $v_0(x)$ and $\sigma_0$ are initial data of $v$ and $\sigma$, respectively.
The functions $f_v$ and $f_\sigma$ are nonlinear.
A typical case is
\begin{align} \label{eq_example_f_v_f_sigma}
    \begin{split}
        f_v(v, \sigma)
        & = - \theta(\sigma) \partial_x v
        + v (1 - v),\\
        f_\sigma(v, \sigma)
        & = \sigma (1 - \sigma)
        + \theta(\sigma)^2 \gamma_+ v.
    \end{split}
\end{align}
Compare with the nonlinear functions in \cite{FurukawaKitahata2024}.
Let $\gamma_\pm \varphi = \varphi(\pm 1)$ be the boundary trace operator.
The boundary conditions are defined with $B_1$ and $B_2$ are boundary operators defined by
\begin{align} \label{eq_BC}
    \begin{split}
        B_1(\theta; \varphi)
        &= (1 - \theta) \gamma_+ \varphi - \gamma_- \varphi,\\
        B_2(\theta; \varphi)
        &= \gamma_+ \partial_x \varphi - (1 - \theta) \gamma_- \partial_x \varphi,
    \end{split}
\end{align}
for some $\theta > 0$, which can be a function of $t \geq 0$.
The functions $g_v$ and $g_\sigma$ are forcing terms independent of $v$ and $\sigma$.
The system (\ref{eq_filter_clogging}) is a mathematical model introduced by the author and Kitahata \cite{FurukawaKitahata2024} to describe the filtration of liquids, \textit{e.g.}, water or air.
We refer to this boundary condition as the fourth boundary condition, or the Furukawa–Kitahata (FK) type boundary condition.
A typical example is the filtration system in an aquarium housing live fish.  
In this model, the boundary conditions replicate the function of a filtration device installed at the boundary.  
Some portion of microscopic particles (\textit{e.g.}, fish metabolites) in the aquarium passes through the filtration device at the boundary, driven by a pump.  
At this time, a fraction $\theta$ of the particles is absorbed by the filtration device, while the remaining particles are returned to the aquarium from the opposite side.  
This process helps to maintain the cleanliness of the water.

The second equation in (\ref{eq_BC}) serves a mathematical requirement to make the Laplace operator $- \partial_x^2$ associated with the domain
\begin{align} \label{eq_domain}
    D(-\partial_x^2)
    = \{
        \varphi \in H^{2, q}(I)
        \,:\,
          B_1(\theta; \varphi)
        = B_2(\theta; \varphi)
        = 0
    \}
\end{align}
as non-negative and self-adjoint when $q = 2$.
It can be seen from the formula
\begin{align*}
    -\int_I 
        \partial_x^2 \varphi(x) \varphi(x)
    dx
    = \int_I 
        \vert
            \partial_x \varphi(x)
        \vert^2
    dx
    \geq 0.
\end{align*}
See also Lemma 3.2.1 in the book by Sohr \cite{Sohr2001}.
Therefore, it can be understood that the boundary conditions (\ref{eq_BC}) represent intermediate boundary conditions from the periodic boundary condition ($\theta=0$) to the Dirichlet-Neumann boundary condition ($\theta=1$) so that the Laplace operator $-\partial_x^2$ is non-negative self-adjoint.
See also the book by Schmudgen \cite{Schmudgen2012} for the complete characterization of boundary conditions leading the self-adjointness of the one-dimensional Laplace operator $- \partial_x^2$.
The positivity of the spectrum implies the decay of the semigroup generated by the one-dimensional Laplace operator with the FK-type boundary condition.  
This decay corresponds to the effect of filtration.  
At the endpoint cases $\theta = 0$ and $\theta = 1$, the boundary condition reduces to the periodic boundary condition and the Dirichlet–Neumann mixed boundary condition, respectively.  
The case $\theta = 0$ represents the absence of filtration, and thus no decay occurs.  
In contrast, the case $\theta = 1$ corresponds to maximal filtration, leading to the fastest decay.
As the concentration of fish metabolites $\sigma$ increases, the filtration ratio $\theta(\sigma)$ tends to decrease.  
Consequently, the smallest eigenvalue of the operator $- \partial_x^2$ approaches zero, and the filtration effect diminishes accordingly.

This paper addresses the well-posedness of the Cauchy problem for the parabolic equation~(\ref{eq_filter_clogging}), hereafter referred to simply as the well-posedness of~(\ref{eq_filter_clogging}).
There are many results about the maximal regularity for the abstract parabolic type equation
\begin{align} \label{eq_abstract_parabolic_eq}
    \begin{split}
        \frac{du}{dt} + A u
        & = f \quad \text{in $X$ for $t > 0$}, \\
        u(0)
        & = u_0,
    \end{split}
\end{align}
in a UMD Banach space $X$ where $f: (0, T) \rightarrow X$ is the forcing and $u_0$ is the initial data in a trace space $X_\gamma$.
The operator $A$ is a densely defined closed operator and is a generator of an analytic semigroup.
For the theory of an analytic semigroup of a closed operator, the reader is referred to \cite{Kato1966}.
The maximal $L^p$-$L^q$ regularity for $p, q \in (1, \infty)$ means that $\frac{du}{dt}, Au \in L^p(0,T; X) \hookrightarrow C([0, T); X_\gamma)$ for a Banach space $X$ of a closed subspace of $L^q$ and the inequality
\begin{align*}
    \left \Vert
        \frac{du}{dt}
    \right \Vert_{L^p(0,T; X)}
    + \Vert
        A u
    \Vert_{L^p(0,T; X)}
    \leq C (
        \Vert
            u_0
        \Vert_{X_\gamma}
        + \Vert
            f
        \Vert_{L^p(0,T; X)}
    )
\end{align*}
holds.
Dore and Venni \cite{DoreVenni1987} proved that the fractional power of $A$ is a sufficient condition; see also Dore \cite{Dore1993} for an application to parabolic partial differential equations (PDEs), and Giga and Sohr \cite{GigaSohr1991} for an application to the Stokes operator.
The theory of operators with fraction imaginary power leads to the theory of operator with bounded $H^\infty$-calculus, see for example \cite{AuscherMcIntoshNahmod1997, CowlingDoustMcIntoshYagi1996, McIntosh1986}.
The necessary and sufficient condition for maximal regularity was characterized by Weis \cite{Weis2001}.
The $\mathcal{R}$-boundedness of the semigroup is a necessary and sufficient condition of the maximal regularity.
The functional analytic theory of maximal regularity including a sum of closed operators has been developed by many researchers.
The reader is referred to \cite{KunstmannWeis2004, DenkPrussZacher2008, PrussSimonett2004, PrussSimonett2016} for comprehensive studies on the subject.

The one-dimensional Laplace operator $\partial_x^2 =: \mathcal{A}(t)$ in the equation (\ref{eq_filter_clogging}) is time-dependent because the domain depends on $\theta = \theta(\sigma(t))$.
The solution $u(t)$ to (\ref{eq_abstract_parabolic_eq}) when $A = \mathcal{A}(t)$ is constructed by the evolution operator $\mathcal{T}(t, s)$ and the Duhamel formula
\begin{align*}
    u(t)
    = \mathcal{T}(t, 0) u_0
    + \int_0^t
        \mathcal{T}(t, \tau) f(\tau)
    d\tau
\end{align*}
at least formally.
The construction of the evolution operator in this case is developed by Kato and Tanabe \cite{KatoTanabe1962}.
If the domain is a subspace of the Hilbert space and is time-independent, the evolution operator can be constructed by the corresponding quadratic-form from Lions's method \cite{Lions1959, Lions1968}.
See \cite{Tanabe1960, Tanabebook,Yagi1991, Yagibook}

There are several results concerning well-posedness of PDEs under dynamic boundary conditions.
Hintermann \cite{Hintermann1989} studied the well-posedness of various type of PDEs, for example elliptic, parabolic, and hyperbolic types, in Sobolev spaces.
Denk, Pr\"{u}ss, and Zacher \cite{DenkPrussZacher2008} have established the well-posedness of $2m$-th order parabolic equations for where $m \in \Integer_{\geq 1}$ under dynamic boundary conditions in the $L^p$-$L^p$ maximal regularity framework.
This is a highly general result that is applicable to linear parabolic equations under generalized Dirichlet, Neumann, and Robin boundary conditions.
The first author and Kajiwara \cite{FurukawaKajiwara2021} have proved sufficient conditions for well-posedness in the $L^p$-$L^q$ maximal regularity setting. 
The reader is referred to the book by Pr\"{u}ss and Simonett \cite{PrussSimonett2016} for comprehensive knowledge about the well-posedness of PDEs in maximal $L^p$-$L^q$ framework.
However, these results are not applicable to equation~(\ref{eq_filter_clogging}) due to the interaction between the two boundaries.
The nonlinearity of $\sigma$ in the system (\ref{eq_filter_clogging}) is embedded in the most essential part that determines the decay structure of the system.  
To the best of the author's knowledge, there are no existing mathematical analyses for such systems other than the previous work \cite{FurukawaKitahata2024}.
The existence theorem for the equations of filtration \cite{FurukawaKitahata2024} is based on the evolution operator theory in $L^2$ developed by Tanabe, as mentioned, together with the Leray–Schauder principle.  
Although the strategy is standard, the argument is technically involved, because Tanabe's theory requires $C^{1,\alpha}$-H\"{o}lder regularity for $\theta(\sigma(t))$.  
As a result, the solution space for $v$ must be $H^3$ because the equation of $\sigma$ includes the boundary trace of $v$, which is a significantly higher regularity class than that of typical second-order nonlinear parabolic equations.  
In this paper, we propose a new framework to establish well-posedness in the $L^p$–$L^q$ maximal regularity class.  
This framework allows for solutions in less regular spaces than those required in previous work, and yields results that ensure the existence of local-in-time well-posedness of equations with various types of nonlinearities.

The solvability of parabolic and elliptic equations under various boundary conditions—such as Dirichlet, Neumann, Robin, and their generalizations—as well as under dynamic boundary conditions, has been extensively studied.  
In our research, we propose a novel combination of boundary conditions.  
This idea interprets the combination of the boundary condition~(\ref{eq_BC}) and the dynamic boundary condition as a mechanism for controlling internal unknowns (filtration), thereby providing a new perspective on the effectiveness of formulating PDEs under dynamic boundary conditions.

To introduce the main result, we begin by setting notation.
Let $\theta_0 = \theta(\sigma_0)$ and $A_q(\theta_0) = \partial_x^2$ associated with the same domain as \eqref{eq_domain} for $\theta = \theta_0$.
We denote the real-interpolation space by
\begin{align*}
    D_{p, q, \eta} (\theta_0)
    = (L^q(I), D(A_q(\theta_0)))_{\eta, p}, \quad \eta \in (0,1).
\end{align*}
Since $A_q(\theta_0)$ admits a bounded $H^\infty$-calculus, as shown in Theorem~\ref{thm_H_infty_calculus_of_A}, this fact combined with the trace theorem allows us to characterize $D_{p, q, \eta}(\theta_0)$ as
\begin{equation*}
    D_{p, q, \eta} (\theta_0)
    = \left \{
        \begin{alignedat}{3}
            B^{2\eta}_{q, p}(I),
            & \quad &
            & \eta \in (0, 1/2p),\\
            \{
                \varphi \in B^{2\eta}_{q, p}(I)
                \, ; \,
                B_1(\theta_0; \varphi)
                = 0
            \},
            & \quad & 
            & \eta \in (1/2p, 1/2 + 1/2p),\\
            \{
                \varphi \in B^{2\eta}_{q, p}(I)
                \, ; \,
                B_1(\theta_0; \varphi)
                = B_2(\theta_0; \varphi)
                = 0
            \},
            & \quad &  
            & \eta \in (1/2 + 1/2p, 1).
        \end{alignedat}
    \right .
\end{equation*}
For the characterization of the initial trace class, the reader is referred to \cite{GigaGriesHieberHusseinKashiwabara2017_H_infty}, for instance.
Let
\begin{align*}
    E_{v, 0, p, q, T}
    = L^p(0,T; L^q(I)), \quad
    E_{\sigma, 0, p, T}
    = L^{p}(0,T), \quad
    E_{\sigma, 1, p, T}
    = H^{1, p}(0,T).
\end{align*}
The main result of this paper is
\begin{theorem} \label{thm_main}
    Let $1 < p, q < \infty$ and $l, m \in \Integer_{\geq 1}$.
    Let $v_0 \in D_{p, q, 1 - 1/p}(\theta_0)$ and $\sigma_0 \in \Real$.
    Let $g_v \in E_{v, 0, p, q, T}$ and $g_\sigma \in E_{\sigma, 0, p, T}$.
    Assume that $f_v(v, \sigma)$ and $f_\sigma(w, \sigma)$ satisfy the boundedness
    \begin{gather} \label{eq_assumption_1_of_f_in_main_thm}
        \begin{split}
            \Vert
                f_v(v, \sigma)
            \Vert_{E_{v, 0, p, q, T}}
            \leq C (
                \delta(T)
                + \nu^l
                + s^l
            )
            (
                \nu
                + s
            ),\\
            \Vert
                f_\sigma(v, \sigma)
            \Vert_{L^p(0, T)}
            \leq C (
                \delta(T)
                + \nu^m
                + s^m
            )
            (
                \nu
                + s
            ),\\
            (
                \nu
                = \Vert
                    v
                \Vert_{E_{v, 1, p, q, T}}, \,
                s = \Vert
                    \sigma
                \Vert_{E_{\sigma, 1, p, T}}
            )
        \end{split}
    \end{gather}
    and the locally Lipschitz continuity
    \begin{align} \label{eq_assumption_2_of_f_in_main_thm}
        \begin{split}
            & \Vert
                f_v(v_1, \sigma_1)
                - f_v(v_2, \sigma_2)
            \Vert_{E_{w, 0, p, q, T}}\\
            & \quad \leq C (
                \delta(T)
                + \nu_1^l
                + \nu_2^l
                + s_1^l
                + s_2^l
            )
            (
                \Vert
                    v_1
                    - v_2
                \Vert_{E_{w, 1, p, q, T}}
                + \Vert
                    \sigma_1
                    - \sigma_2
                \Vert_{_{\sigma, 1, p, T}}
            ),\\
            & \Vert
                f_\sigma(v_1, \sigma_1)
                - f_\sigma(v_2, \sigma_2)
            \Vert_{E_{w, 0, p, q, T}}\\
            & \quad \leq C (
                \delta(T)
                + \nu_1^m
                + \nu_2^m
                + s_1^m
                + s_2^m
            )
            (
                \Vert
                    v_1
                    - v_2
                \Vert_{E_{w, 1, p, q, T}}
                + \Vert
                    \sigma_1
                    - \sigma_2
                \Vert_{E_{\sigma, 1, p, T}}
            ),\\
            & \quad\quad\quad\quad\quad\quad
            (
                \nu_j
                = \Vert
                    v_j
                \Vert_{E_{w, 1, p, q, T}}, \,
                s_j
                = \Vert
                    \sigma_j
                \Vert_{E_{\sigma, 1, p, T}}
            )
        \end{split}
    \end{align}
    for some constant $\delta(T) > 0$, which tends to zero as $T \rightarrow 0$.
    Then there exists $T_0 \in (0, T)$ and a unique solution
    \begin{gather*}
        \sigma(t)
        \in \Real, \quad
        v(t)
        \in D_{p, q, 1 - 1/p}(\theta(\sigma(t))), \text{ continuously,}
    \end{gather*}
    to (\ref{eq_filter_clogging}) in the strong sense in $E_{v, 0, p, q, T} \times E_{\sigma, 0, p, T}$ satisfying
    \begin{align}
        \sigma
        \in H^{1, p}(0,T_0), \quad
        \partial_t v, \,
        A_q(\theta(\sigma)) v
        \in E_{v, 0, p, q, T}
    \end{align}
    and the inequality
    \begin{align*}
        & \Vert
            \partial_t v
        \Vert_{E_{w, 0, p, q, T_0}}
        + \Vert
            A_q(\theta(\sigma(t))) v
        \Vert_{E_{w, 0, p, q, T_0}}
        + \Vert
            \sigma
        \Vert_{E_{\sigma, 1, p, T}}\\
        & \leq C_{T} (
            \Vert
                w_0
            \Vert_{D_{p, q, 1 - 1 / p}}
            + \vert
                \sigma_0
            \vert
            + \Vert
                g_w
            \Vert_{E_{w, 0, p, q, T_0}}
            + \Vert
                g_\sigma
            \Vert_{E_{\sigma, 0, p, T}}
        ).
    \end{align*}
\end{theorem}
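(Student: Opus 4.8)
The plan is to solve \eqref{eq_filter_clogging} by a contraction-mapping argument in the maximal regularity space $E_{v,1,p,q,T_0}\times E_{\sigma,1,p,T_0}$ for a sufficiently small $T_0\in(0,T)$. Given a pair $(\tilde v,\tilde\sigma)$ in a closed ball of this space, define $\Psi(\tilde v,\tilde\sigma)=(v,\sigma)$ as follows. Let $\sigma$ be obtained by integrating the ordinary differential equation $\sigma'=f_\sigma(\tilde v,\tilde\sigma)+g_\sigma$, $\sigma(0)=\sigma_0$, which places $\sigma$ in $H^{1,p}(0,T_0)=E_{\sigma,1,p,T_0}$ with $\|\sigma\|_{E_{\sigma,1,p,T_0}}\le C(|\sigma_0|+\|f_\sigma(\tilde v,\tilde\sigma)\|_{L^p(0,T_0)}+\|g_\sigma\|_{L^p(0,T_0)})$; and let $v$ be the solution of the linear parabolic problem $\partial_t v-\partial_x^2 v=f_v(\tilde v,\tilde\sigma)+g_v$ on $I$ with initial value $v_0$ and with the boundary conditions $B_1(\theta(\tilde\sigma(t));v)=B_2(\theta(\tilde\sigma(t));v)=0$ frozen along the given curve $\tilde\sigma$. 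The output pair is decoupled, and since $\tilde\sigma(0)=\sigma_0$ the boundary operators at $t=0$ reduce to those of $\theta_0$, which is consistent with the hypothesis $v_0\in D_{p,q,1-1/p}(\theta_0)$. A fixed point of $\Psi$ is the desired solution of \eqref{eq_filter_clogging}.

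The heart of the argument is the linear $v$-solver, since the domain of $-\partial_x^2$ moves in $t$ through $\theta(\tilde\sigma(t))$. I would not work with the moving domain directly, but instead rewrite the moving boundary conditions as \emph{inhomogeneous} boundary data for the fixed reference operator $A_q(\theta_0)$. Setting $\mu(t)=\theta(\tilde\sigma(t))-\theta_0$, the affine dependence of \eqref{eq_BC} on $\theta$ gives
\begin{align*}
    B_1(\theta_0;v)=\mu(t)\,\gamma_+ v,\qquad
    B_2(\theta_0;v)=-\mu(t)\,\gamma_-\partial_x v,
\end{align*}
so that $v$ solves the parabolic equation under the $\theta_0$-type boundary conditions with boundary data $h_1(t)=\mu(t)\gamma_+ v(t)$ and $h_2(t)=-\mu(t)\gamma_-\partial_x v(t)$. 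Because $A_q(\theta_0)$ admits a bounded $H^\infty$-calculus by Theorem~\ref{thm_H_infty_calculus_of_A}, it has maximal $L^p$–$L^q$ regularity, and the associated inhomogeneous-boundary-data problem for the FK($\theta_0$) condition is solvable in the maximal regularity class (this is the standard reduction of inhomogeneous boundary data, see e.g.\ \cite{PrussSimonett2016}), with an estimate of the form $\|v\|_{E_{v,1,p,q,T}}\le C(\|v_0\|_{D_{p,q,1-1/p}(\theta_0)}+\|f\|_{E_{v,0,p,q,T}}+\|h_1\|_{\mathbb{X}_1}+\|h_2\|_{\mathbb{X}_2})$, where $\mathbb{X}_1,\mathbb{X}_2$ are the appropriate trace spaces on $(0,T)$ (on the two-point set $\partial I$ these are merely fractional Sobolev spaces in time, of orders $1-\tfrac{1}{2p}-\tfrac{1}{2q}$ and $\tfrac12-\tfrac{1}{2p}-\tfrac{1}{2q}$). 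The crucial quantitative point is that $\mu(0)=0$ and $\mu'=\theta'(\tilde\sigma)\tilde\sigma'\in L^p(0,T)$, so for each trace exponent $s<1$ one has $\|\mu\|_{W^{s,p}(0,T)}\le C\,T^{1-s}\|\mu'\|_{L^p(0,T)}$, which tends to $0$ as $T\to0$; since $W^{s,p}(0,T)$ is a multiplier algebra for the relevant $s$ and $\gamma_\pm v,\gamma_\pm\partial_x v$ are controlled by $\|v\|_{E_{v,1,p,q,T}}$, the boundary data obeys $\|h_1\|_{\mathbb{X}_1}+\|h_2\|_{\mathbb{X}_2}\le C\,T^{\kappa}\,\|v\|_{E_{v,1,p,q,T}}$ for some $\kappa>0$. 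Absorbing this term for small $T$ by a Neumann-series step yields a linear solver with a uniform-in-small-$T$ bound $\|v\|_{E_{v,1,p,q,T}}\le C(\|v_0\|_{D_{p,q,1-1/p}(\theta_0)}+\|f_v(\tilde v,\tilde\sigma)+g_v\|_{E_{v,0,p,q,T}})$, and the solution satisfies $v(t)\in D(A_q(\theta(\tilde\sigma(t))))$ continuously in $t$ with $A_q(\theta(\tilde\sigma))v=-\partial_x^2 v\in E_{v,0,p,q,T}$.

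With the linear solver in hand, closing the fixed point is routine. Choose the ball radius $R$ comparable to the data norm $\|v_0\|_{D_{p,q,1-1/p}(\theta_0)}+|\sigma_0|+\|g_v\|_{E_{v,0,p,q,T}}+\|g_\sigma\|_{E_{\sigma,0,p,T}}$, and then shrink $T$ to $T_0$. The self-mapping and contraction properties follow from the linear estimates above together with \eqref{eq_assumption_1_of_f_in_main_thm} and \eqref{eq_assumption_2_of_f_in_main_thm}: their prefactors contain the quantity $\delta(T)\to0$, and on a short interval the remaining contributions carry positive powers of $T$ (through the decay of $\|g_v\|_{E_{v,0,p,q,T}}$, of $\|g_\sigma\|_{E_{\sigma,0,p,T}}$, of the free-solution norm $\|A_q(\theta_0)e^{-\,\cdot\,A_q(\theta_0)}v_0\|_{L^p(0,T;L^q(I))}$ which vanishes as $T\to0$ since $v_0\in D_{p,q,1-1/p}(\theta_0)$, and of the $H^{1,p}(0,T)$-norm on short intervals of the essentially constant part of $\sigma$). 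Hence, for $T_0$ small enough, $\Psi$ maps the ball into itself and is a strict contraction there; in the difference estimate one uses in addition that $\mu_1,\mu_2$ and $\mu_1-\mu_2$ all vanish at $t=0$, because $\tilde\sigma_1(0)=\tilde\sigma_2(0)=\sigma_0$. Banach's fixed point theorem produces the unique solution $(v,\sigma)$; evaluating the linear maximal regularity estimate at the fixed point gives the asserted inequality, the reformulation of the boundary conditions shows $v(t)\in D_{p,q,1-1/p}(\theta(\sigma(t)))$ continuously, and uniqueness in the full strong class follows from the same Lipschitz bounds.

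I expect the main obstacle to be the linear step, namely establishing the maximal $L^p$–$L^q$ regularity of the parabolic problem under \emph{inhomogeneous} FK($\theta_0$)-type boundary data, with an explicit description of the admissible trace spaces $\mathbb{X}_1,\mathbb{X}_2$: this is where the nonstandard, nonlocal coupling of the two endpoints in \eqref{eq_BC} must be confronted, since the usual Lopatinskii–Shapiro verification for local boundary conditions does not apply verbatim, and it is also where the smallness gain $T^{\kappa}$ in the boundary data has to be extracted uniformly for all $1<p,q<\infty$. A secondary, bookkeeping-level difficulty is the superlinear structure in \eqref{eq_assumption_1_of_f_in_main_thm}: one must organise the a priori bound so that the high powers $\nu^{l},s^{m}$ stay controlled on the ball of radius $R$ while the overall map remains a contraction, which is handled by splitting $v$ (resp. $\sigma$) into a linear part carrying the data and a correction with zero initial data, the latter having small norm on short time intervals.
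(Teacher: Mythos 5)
Your overall architecture (freeze the boundary condition at $\theta_0$, treat the deviation $\mu(t)=\theta(\sigma(t))-\theta_0$ as a small perturbation vanishing at $t=0$, absorb it by a Neumann-series/smallness argument, and close with Banach's fixed point theorem using \eqref{eq_assumption_1_of_f_in_main_thm}--\eqref{eq_assumption_2_of_f_in_main_thm}) matches the paper's strategy in spirit. However, there is a genuine gap at exactly the point you flag as the ``main obstacle'': your linear solver rests on the assertion that the parabolic problem with \emph{inhomogeneous} FK($\theta_0$)-type boundary data $h_1=\mu\,\gamma_+v$, $h_2=-\mu\,\gamma_-\partial_x v$ admits maximal $L^p$--$L^q$ regularity with the usual trace spaces $\mathbb{X}_1,\mathbb{X}_2$, citing the standard theory in \cite{PrussSimonett2016}. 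That theory covers local (generalized Dirichlet/Neumann/Robin) boundary operators via Lopatinskii--Shapiro; the paper explicitly states that those results are \emph{not} applicable here because $B_1,B_2$ couple the two endpoints nonlocally. You do not supply the missing trace-space characterization or the solvability of the inhomogeneous boundary value problem, so the central step of your argument is asserted rather than proved.

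The paper avoids this obstacle entirely by a different device: instead of solving an inhomogeneous boundary value problem, it constructs an explicit lifting $v_c=(\theta-\theta_0)E(\theta_0)v$ using the reflection-type extension operator $E$ of Lemma~\ref{lem_extension_theorem}, chosen so that $\tilde v=v-v_c=G(\theta_0,\theta)v$ satisfies the \emph{homogeneous} conditions $B_j(\theta_0;\tilde v)=0$. Since $E(\theta_0)$ is bounded on $L^q(I)$ and on $H^{m,q}(I)$ with $\theta$-independent constants, $G(\theta_0,\theta)=I-(\theta-\theta_0)E(\theta_0)$ is invertible by a Neumann series for small $|\theta-\theta_0|$ (Proposition~\ref{prop_inverse_of_G}), and the commutator identities of Propositions~\ref{prop_formula_for_dxm_E}--\ref{prop_bound_of_difference_of_dtm_G} convert the moving boundary condition into lower-order interior perturbations $F_w^1,F_w^3$ of a semilinear equation for $\tilde v$ with \emph{fixed homogeneous} boundary conditions. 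Only then is maximal regularity needed, and only for the homogeneous problem, where it follows from the bounded $H^\infty$-calculus of $A_q(\theta_0)$ established via the explicit resolvent formula (Theorem~\ref{thm_H_infty_calculus_of_A}). To repair your proof you would either have to prove the inhomogeneous-data maximal regularity theorem for the FK-type condition from scratch, or adopt the paper's lifting: the latter is the content of Section~\ref{sec_new_formulation} and is what makes the fixed-point argument of Theorem~\ref{thm_well_posedness_nonlinear_propblem_abstract} and Lemma~\ref{lem_nonlinear_problem} go through.
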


\begin{remark}
    When $f_v(v, \sigma)$ and $f_\sigma(v, \sigma)$ are given by (\ref{eq_example_f_v_f_sigma}), we find from the Sobolev inequality and the trace theorem that
    \begin{align*}
        & \Vert
            f_v(v, \sigma)
        \Vert_{L^p(0, T; L^q(I))}\\
        & \leq C \Vert
            v
        \Vert_{L^p(0, T; H^{1, q}(I))}
        + C \Vert
            v
        \Vert_{L^{2 p}(0, T; L^{2 q}(I))}^2\\
        & \leq C T^\eta (
            \Vert
                v
            \Vert_{
                H^{1, p}(0, T; L^q(I))
                \cap L^p(0, T; H^{2, q}(I))
            }
            + \Vert
                v
            \Vert_{
                H^{1, p}(0, T; L^q(I))
                \cap L^p(0, T; H^{2, q}(I))
            }^2
        ), \\
        & \Vert
            f_\sigma(v, \sigma)
        \Vert_{L^p(0, T)}\\
        & \leq C T^\eta (
            \Vert
                \sigma
            \Vert_{H^{1, p}(0, T)}
            + \Vert
                \sigma
            \Vert_{H^{1, p}(0, T)}^2
            + \Vert
                v
            \Vert_{
                H^{1, p}(0, T; L^q(I))
                \cap L^p(0, T; H^{2, q}(I))
            }
        )
    \end{align*}
    for all $p, q \in (1, \infty)$.
    In this case Theorem \ref{thm_main} is applicable for all $p, q \in (1, \infty)$.
\end{remark}

The strategy to prove Theorem~\ref{thm_main} is as follows.  
We begin by considering the linearized system~\ref{eq_filter_clogging} at time $t = 0$.
The boundary condition~(\ref{eq_BC}) is converted into an inhomogeneous one with a time-independent filtration ratio, given by  
\begin{align*}
    \begin{split}
        B_1(\theta_0; v)
        &= (\theta(\sigma) - \theta_0)\, \gamma_+ v,\\
        B_2(\theta_0; v)
        &= - (\theta(\sigma) - \theta_0)\, \gamma_- \partial_x v.
    \end{split}
\end{align*}
When considering local-in-time well-posedness, it is reasonable to consider the smallness of $(\theta(\sigma) - \theta_0)$.  
To eliminate the inhomogeneity in the boundary conditions, we introduce a correction function $v_c$ such that $\tilde{v} := v - v_c$ satisfies the homogeneous boundary conditions $B_j(\theta_0; \tilde{v}) = 0$ for $j = 1, 2$.  
The function $v_c$ depends linearly and continuously on $v$, and hence can be expressed as $\tilde{v} = G(\theta_0, \theta(\sigma)) v$ for some bounded operator $G$ acting on both $L^q(I)$ and the Sobolev space $H^m(I)$ for $m \in \mathbb{Z}_{\geq 1}$.  
The smallness of $(\theta(\sigma) - \theta_0)$ ensures the invertibility of $G(\theta_0, \theta(\sigma))$, so that $v$ can be recovered from $\tilde{v}$.  
Therefore, it is sufficient to establish the well-posedness of the system satisfied by $(\tilde{v}, \sigma)$.

The resulting system is a nonlinear parabolic equation.  
Its well-posedness in the $L^p$–$L^q$ setting is obtained by combining the maximal regularity of the one-dimensional Laplace operator $A$ associated with the boundary condition~(\ref{eq_BC}) for $\theta = \theta_0$, with the Banach fixed point theorem.  
The bounded $H^\infty$-calculus for $A$ follows from the explicit formula of the resolvent operator $(\lambda - A)^{-1}$ derived via Fourier analysis.  
This formula implies that for any analytic function $h$ defined on a sector $\Sigma$ and decaying as $|\lambda| \to 0$ and $|\lambda| \to \infty$, the operator $h(A)$ is bounded on $L^q(I)$.

Under the assumptions on the nonlinearities $f_v$ and $f_\sigma$, the Fujita–Kato method can be applied.  
The nonlinear terms $f_v(v, \sigma)$ and $f_\sigma(v, \sigma)$ are rewritten as $f_v(G(\theta_0, \theta(\sigma))^{-1} \tilde{v}, \sigma)$ and $f_\sigma(G(\theta_0, \theta(\sigma))^{-1} \tilde{v}, \sigma)$, respectively.  
Since $G(\theta_0, \theta(\sigma))^{-1}$ is continuous in the maximal $L^p$–$L^q$ class with $\sigma$-independent bounds, the transformed nonlinearities satisfy polynomial-type estimates with respect to $\tilde{v}$ and $\sigma$ as given in~(\ref{eq_assumption_1_of_f_in_main_thm})–(\ref{eq_assumption_2_of_f_in_main_thm}).  
Using this standard method, we can construct a unique local-in-time solution to~(\ref{eq_filter_clogging}).  

We introduce notation.  
For a Banach space $X$, we denote by $\Vert \cdot \Vert_{X \rightarrow X}$ the operator norm.  
We write $\Fourier \varphi = \int_\Real e^{-i \xi x} \varphi(x) dx$ and $\FourierInverse \varphi = \frac{1}{2\pi} \int_\Real e^{i \xi x} \varphi(x) dx$ to denote the Fourier transform and its inverse.  
We denote by $L^p_t L^q_x(Q_T) = L^p(0, T; L^q(\Omega))$ the space-time Lebesgue space over $Q_T = (0, T) \times \Omega$, with the norm
\begin{align*}
    \Vert
        \varphi
    \Vert_{L^p_t L^q_x(Q_T)}
    = \Vert
        \varphi
    \Vert_{L^p(0, T; L^q(\Omega))}
    := \left(
        \int_0^T
            \Vert
                \varphi(t, \cdot)
            \Vert_{L^q(\Omega)}^p
        dt
    \right)^{1/p}.
\end{align*}
We analogously define $H^{m,p}_tH^{n,q}_x(Q_T)$ for $m, n \in \Integer_{\geq 0}$ and $p,q \in (1,\infty)$.  
We denote by $\Sigma_\phi$ a sector in $\Complex$ of angle $\phi \in (0, \pi)$ as
\begin{align*}
    \Sigma_\phi
    = \{ \lambda \in \Complex
        ; \vert
            \arg \lambda
        \vert < \phi
    \}.
\end{align*}

The paper is organized as follows.  
In Section~\ref{sec_new_formulation}, we introduce a new formulation that reduces the boundary condition in system~(\ref{eq_filter_clogging}) to a time-independent one.  
The unknowns $(v, \sigma)$ are transformed into $(\tilde{v}, \sigma)$.  
We also derive some estimates for the operator $v \mapsto \tilde{v}$.
In Section~\ref{sec_well_posedness}, we establish the well-posedness of the equation satisfied by $(\tilde{v}, \sigma)$.  
We first develop the linear theory for the corresponding linearized equation, and in particular, we prove that $A_q$ admits a bounded $H^\infty$-calculus.  
We then apply this linear theory to the nonlinear problem to prove the well-posedness of the original equations.

\section{New Formulation} \label{sec_new_formulation}
In this section, we reduce the equation (\ref{eq_filter_clogging}) to one having time-independent boundary conditions.
By transforming Lemma 2.9 in \cite{FurukawaKitahata2024}, we obtain the following
\begin{lemma}\label{lem_extension_theorem}
    Let $m \in \Integer_{> 0}$, $\alpha \in [0, 1]$, and $u \in H^m(I)$.
    Then there exists an increasing function $\psi_\ast \in C^\infty(-1, 1)$ satisfying $\psi_\ast(\pm 1) = \pm 1$ and extension $v \in H^m(I)$ given by
    \begin{align} \label{eq_form_of_the_extension_operator_E}
        \begin{split}
            v(x)
            & = \frac{
                1 - \alpha
            }{
                1 + (1 - \alpha)^2
            }u(x)
            + \frac{
                \psi_\ast(x)
            }{
                1 + (1 - \alpha)^2
            }u(-x)\\
            & =: E(\theta, \psi_\ast) u
        \end{split}
    \end{align}
    such that
    \begin{align} \label{eq_boundary_condition_and_estimate_of_extension}
        \begin{split}
            (1 - \alpha) \gamma_+ v
            - \gamma_- v
            = \gamma_+ u,
            & \quad
            \gamma_+ \partial_x v
            - (1 - \alpha) \gamma_- \partial_x v
            = - \gamma_- \partial_x u,\\
            \Vert
                v
            \Vert_{H^{m, q}(I)}
            & \leq C \Vert
                u
            \Vert_{H^{m, q}(I)}
        \end{split}
    \end{align}
    for some $\alpha$-independent constant $C = C(m,I)>0$.
    Moreover, we define
    \begin{align*}
        \overline{E}(\theta, \psi_\ast) u
        = \frac{
            1 - \alpha
        }{
            1 + (1 - \alpha)^2
        }u(x)
        - \frac{
            \psi_\ast(x)
        }{
            1 + (1 - \alpha)^2
        }u(-x).
    \end{align*}
    Then it follows that
    \begin{align} \label{eq_boundary_condition_and_estimate_of_extension_overline_E}
        \Vert
            \overline{E}(\theta, \psi_\ast) u
        \Vert_{H^{m, q}(I)}
        \leq C \Vert
            u
        \Vert_{H^{m, q}(I)}
    \end{align}
\end{lemma}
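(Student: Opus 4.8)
The plan is to proceed entirely by an explicit verification, since $E(\theta,\psi_*)$ is given by the closed formula~\eqref{eq_form_of_the_extension_operator_E}. First I would fix, once and for all, a profile $\psi_* \in C^\infty([-1,1])$ that is strictly increasing, satisfies $\psi_*(\pm 1) = \pm 1$, and has $\psi_*'(\pm 1) = 0$; for instance $\psi_*(x) = \sin(\pi x/2)$ works, since $\psi_*'(x) = \tfrac{\pi}{2}\cos(\pi x/2)$ vanishes at $x=\pm1$ and is positive on $(-1,1)$, or one may build such a $\psi_*$ by integrating a suitable nonnegative bump. The only properties of $\psi_*$ that will be used are $\psi_*(\pm1)=\pm1$, $\psi_*'(\pm1)=0$, and $\psi_*\in C^m([-1,1])$; in particular no condition is imposed on the higher derivatives of $\psi_*$ at the endpoints, so the same fixed $\psi_*$ serves for every $m$, every $\alpha\in[0,1]$, and every $q$. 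Abbreviate $c_1 = c_1(\alpha) = (1-\alpha)/(1+(1-\alpha)^2)$ and $c_2 = c_2(\alpha) = 1/(1+(1-\alpha)^2)$, so that $v = E(\theta,\psi_*)u = c_1\,u(x) + c_2\,\psi_*(x)\,u(-x)$ and $\overline{E}(\theta,\psi_*)u = c_1\,u(x) - c_2\,\psi_*(x)\,u(-x)$.

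For the norm bounds I would note that the reflection $x\mapsto u(-x)$ is an isometry of $H^{m,q}(I)$ and that $\psi_*\in C^m([-1,1])$, so the Leibniz rule gives $\Vert \psi_*(\cdot)\,u(-\,\cdot\,)\Vert_{H^{m,q}(I)} \le C(m)\,\Vert\psi_*\Vert_{C^m([-1,1])}\,\Vert u\Vert_{H^{m,q}(I)}$. Since $t\mapsto t/(1+t^2)$ and $t\mapsto 1/(1+t^2)$ are bounded on $t = 1-\alpha\in[0,1]$ — indeed $c_1\in[0,1/2]$ and $c_2\in[1/2,1]$ — we have $|c_1(\alpha)| + |c_2(\alpha)| \le 2$ uniformly in $\alpha$, whence $\Vert v\Vert_{H^{m,q}(I)} \le C(m,I)\,\Vert u\Vert_{H^{m,q}(I)}$ with a constant independent of $\alpha$. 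The estimate~\eqref{eq_boundary_condition_and_estimate_of_extension_overline_E} for $\overline{E}(\theta,\psi_*)u$ is literally the same argument, only the sign of the second term changing.

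For the boundary identities I would evaluate $v$ and $v'(x) = c_1 u'(x) + c_2 \psi_*'(x) u(-x) - c_2 \psi_*(x) u'(-x)$ at $x=\pm1$. Using $\psi_*(\pm1)=\pm1$ and $\psi_*'(\pm1)=0$ yields $\gamma_\pm v = c_1\gamma_\pm u \pm c_2\gamma_\mp u$ and $\gamma_\pm \partial_x v = c_1\gamma_\pm\partial_x u \mp c_2\gamma_\mp\partial_x u$. Substituting into $(1-\alpha)\gamma_+v - \gamma_-v$, the coefficient of $\gamma_+u$ is $(1-\alpha)c_1 + c_2 = ((1-\alpha)^2+1)/(1+(1-\alpha)^2) = 1$ and the coefficient of $\gamma_-u$ is $(1-\alpha)c_2 - c_1 = 0$, giving the first identity in~\eqref{eq_boundary_condition_and_estimate_of_extension}; similarly in $\gamma_+\partial_x v - (1-\alpha)\gamma_-\partial_x v$ the coefficient of $\gamma_+\partial_x u$ is $c_1 - (1-\alpha)c_2 = 0$ and that of $\gamma_-\partial_x u$ is $-c_2 - (1-\alpha)c_1 = -1$, giving the second identity.

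The computation is elementary once $\psi_*$ is chosen, so there is no real obstacle; the only point requiring care — the one genuine design decision — is the recognition that one must impose $\psi_*'(\pm1)=0$, not merely $\psi_*(\pm1)=\pm1$, so that the cross term $c_2\,\psi_*'(\pm1)\,u(\mp1)$ appearing in $\gamma_\pm\partial_x v$ drops out; without this the second boundary identity fails while everything else is unaffected. One may also remark that this lemma is exactly Lemma~2.9 of~\cite{FurukawaKitahata2024} after rewriting the parameter and the normalizing constants $c_1,c_2$ in the form~\eqref{eq_form_of_the_extension_operator_E}.
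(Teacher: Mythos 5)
Your verification is correct, and it is essentially the argument the paper intends: the paper gives no proof of its own, deriving the lemma by ``transforming Lemma 2.9'' of \cite{FurukawaKitahata2024}, and your explicit computation with $c_1=(1-\alpha)/(1+(1-\alpha)^2)$, $c_2=1/(1+(1-\alpha)^2)$ supplies exactly what that transformation amounts to. You also correctly isolate the one point the lemma statement leaves implicit, namely that $\psi_\ast$ must additionally satisfy $\psi_\ast'(\pm 1)=0$ for the second boundary identity to hold; this is consistent with the author's intended choice of a $\psi_\ast$ that is constant near the endpoints, so your $\sin(\pi x/2)$ (or any such profile) is a legitimate realization.
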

By definition, $E(\alpha, \varphi)$ is a linear operator, and its operator norm $\Vert \cdot \Vert_{L^q(I) \rightarrow L^q(I)}$ is bounded by $2$.
If $\varphi = \psi_\ast$, we simply denote by $E(\alpha) = E(\alpha, \psi_\ast)$.

\begin{proposition} \label{prop_formula_for_dxm_E}
    Let $\alpha > 0$.
    Then
    \begin{align*}
        \partial_x E(\alpha) u
        = \overline{E}(\alpha) \partial_x u
        + \frac{
            \psi_\ast^\prime (x)
        }{
            1 + (1 - \alpha)^2
        }u(-x)
    \end{align*}
    and
    \begin{align*}
        \partial_x^2 E(\alpha) u
        = E(\alpha) \partial_x^2 u
        + \frac{
            1
        }{
            1 + (1 - \alpha)^2
        } \left(
            - 2 \psi_\ast^\prime (x) \partial_x u(-x)
            + \psi_\ast^{\prime\prime} (x) u(-x)
        \right).
    \end{align*}
    Moreover, in odd case,
    \begin{align*}
        \partial_x^{2m - 1} E(\alpha) u
        & = \overline{E}(\alpha) \partial_x^{2m - 1} u\\
        & + \frac{
            1
        }{
            1 + (1 - \alpha)^2
        }
        \sum_{l = 1}^{2m - 1}
        \begin{pmatrix}
            2m - 1 \\
            l \\
        \end{pmatrix}
        (-1)^{2m - 1 - l} [\partial_x^{2m - 1 - l} u] (- x) \frac{d^l}{dx^l} \psi_\ast (x)\\
        & =: \overline{E}(\alpha) \partial_x^{2m - 1} u
        + R_{2m - 1} (\alpha) u,
    \end{align*}
    and in even case,
    \begin{align*}
        \partial_x^{2m} E(\alpha) u
        & = E(\alpha) \partial_x^{2m} u\\
        & + \frac{
            1
        }{
            1 + (1 - \alpha)^2
        }
        \sum_{l = 1}^{2m}
        \begin{pmatrix}
            2m \\
            l \\
        \end{pmatrix}
        (-1)^{2m - l} [\partial_x^{2m - l} u] (- x) \frac{d^l}{dx^l} \psi_\ast (x)\\
        & =: E(\alpha) \partial_x^{2m} u
        + R_{2m} (\alpha) u.
    \end{align*}
    The operator $R_m(\alpha)$ consists of spatial derivatives of order $(m - 1)$.
\end{proposition}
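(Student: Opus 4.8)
The plan is to obtain all four identities from the single explicit expression
\[
E(\alpha) u(x)
= c_1\, u(x) + c_2\, \psi_\ast(x)\, u(-x),
\qquad
c_1 = \frac{1 - \alpha}{1 + (1-\alpha)^2},
\quad
c_2 = \frac{1}{1 + (1-\alpha)^2},
\]
by differentiating in $x$ and repeatedly invoking the chain-rule identity $\partial_x^{j}[u(-x)] = (-1)^{j} u^{(j)}(-x)$ together with the Leibniz rule for the product $\psi_\ast(x)\, u(-x)$. Here $\psi_\ast \in C^\infty(-1,1)$ and $u \in H^{m,q}(I)$, so every derivative below is meaningful in the weak sense up to the order that occurs.

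First I would record the first-order case by direct computation: differentiating once gives $\partial_x E(\alpha) u = c_1 u'(x) - c_2 \psi_\ast(x) u'(-x) + c_2 \psi_\ast'(x) u(-x)$, and the first two terms are exactly $\overline{E}(\alpha) \partial_x u$, since $\overline{E}(\alpha)$ is the operator carrying the minus sign on the reflected part; this is the stated formula for $\partial_x E(\alpha) u$. Differentiating once more, using the companion identity $\partial_x \overline{E}(\alpha) w = E(\alpha) w' - c_2 \psi_\ast'(x) w(-x)$ with $w = \partial_x u$ and then expanding $\partial_x[\psi_\ast'(x) u(-x)]$, collects to $E(\alpha)\partial_x^2 u + c_2\bigl(-2\psi_\ast'(x)u'(-x) + \psi_\ast''(x)u(-x)\bigr)$, which is the claimed second-order formula; equivalently, these two are the $m = 1$ instances of the general formulas proved next.

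For general order $k$ I would compute $\partial_x^k E(\alpha) u$ in one step rather than iterate. The first summand contributes only $c_1 u^{(k)}(x)$, while the Leibniz rule applied to $\psi_\ast(x) u(-x)$ yields
\[
\partial_x^k\bigl[\psi_\ast(x) u(-x)\bigr]
= \sum_{l=0}^{k} \binom{k}{l} \psi_\ast^{(l)}(x)\, \partial_x^{k-l}[u(-x)]
= \sum_{l=0}^{k} \binom{k}{l} (-1)^{k-l}\, u^{(k-l)}(-x)\, \psi_\ast^{(l)}(x).
\]
Isolating the $l = 0$ term gives $(-1)^k c_2 \psi_\ast(x) u^{(k)}(-x)$; combining it with $c_1 u^{(k)}(x)$ produces $E(\alpha)\partial_x^k u$ when $k$ is even, since then $(-1)^k = 1$, and $\overline{E}(\alpha)\partial_x^k u$ when $k$ is odd, since then $(-1)^k = -1$. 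The remaining terms, $l = 1, \dots, k$, are precisely the operator $R_k(\alpha) u$ as written in the statement (with $k = 2m-1$ or $k = 2m$), and since each of them carries $u^{(k-l)}(-x)$ with $1 \le l \le k$, the highest derivative of $u$ appearing is of order $k-1$, giving the final assertion.

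I do not anticipate a genuine obstacle: the argument is bookkeeping with the Leibniz and chain rules. The only points demanding a little care are the sign matching, that is, keeping straight which of $E(\alpha)$ and $\overline{E}(\alpha)$ absorbs the reflected term for each parity of $k$, and the observation that the $l=0$ term is the \emph{only} one producing a $k$-th order derivative of $u$, so that the reflection operators act on $\partial_x^k u$ exactly while the remainder $R_k(\alpha)$ is genuinely of lower order.
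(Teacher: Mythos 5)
Your proposal is correct and is exactly the argument the paper intends: the paper's proof is the one-line remark that the identities ``follow directly from the chain rule,'' and your Leibniz-rule expansion of $\partial_x^k[\psi_\ast(x)u(-x)]$, with the $l=0$ term absorbed into $E(\alpha)$ or $\overline{E}(\alpha)$ according to the parity of $k$, is precisely the bookkeeping that remark leaves implicit. The sign checks and the observation that only the $l=0$ term carries a $k$-th order derivative of $u$ are all verified correctly.
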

\begin{proof}
    This follows directly from the chain rule.
\end{proof}

We expand the boundary condition around $\theta_0 = \theta(\sigma_0)$ such as
\begin{align}
    \begin{split}
        B_1(\theta_0; v)
        & = (\theta - \theta_0) \gamma_+ v, \\
        B_2(\theta_0; v)
        & = - (\theta - \theta_0) \gamma_- \partial_x v.
    \end{split}
\end{align}
We can write the correction term $v_c$ and the difference between $v$ and $v_c$ as
\begin{gather} \label{eq_definition_of_tilde_v_vc}
    \begin{split}
        v_c
        := (\theta - \theta_0) E(\theta_0) v,\\
        \tilde{v}
        := v - v_c
        = G(\theta_0, \theta) v
        \quad \text{for} \quad
        G(\theta_0, \theta)
        := I
        - (\theta - \theta_0) E(\theta_0).
    \end{split}
\end{gather}
Clearly,
\begin{gather*}
    v
    = v_c + \tilde{v},\\
    B_j(\theta_0; \tilde{v}) = 0, \quad j = 1,2.
\end{gather*}
\begin{proposition} \label{prop_inverse_of_G}
    Assume that $\vert \theta - \theta_0 \vert \leq \delta$ for some $\delta > 0$.
    Then there exists the inverse operator $G(\theta_0, \theta)^{-1} = (I - (\theta - \theta_0) E(\theta_0))^{-1}$ such that
    \begin{align*}
        v
        & = G(\theta_0, \theta)^{-1} \tilde{v},\\
        v_c
        & = (\theta - \theta_0) E(\theta_0) G(\theta_0, \theta)^{-1} \tilde{v},
    \end{align*}
    and
    \begin{align*}
        \Vert
            G(\theta_0, \theta)^{-1}
        \Vert_{L^q(I) \rightarrow L^q(I)}
        \leq \frac{
            C
        }{
            1 - 2 (\theta - \theta_0)
        }.
    \end{align*}
    Moreover, we define
    \begin{align*}
        \overline{G}(\theta_0, \theta)
        = I - (\theta - \theta_0) \overline{E}(\theta_0).
    \end{align*}
    Then, in the same manner, there exists the inverse operator $\overline{G}(\theta_0, \theta)^{-1}$ such that
    \begin{align*}
        \Vert
            \overline{G}(\theta_0, \theta)^{-1}
        \Vert_{L^q(I) \rightarrow L^q(I)}
        \leq \frac{
            C
        }{
            1 - 2 (\theta - \theta_0)
        }.
    \end{align*}
\end{proposition}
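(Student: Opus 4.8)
The plan is to realize $G(\theta_0,\theta)^{-1}$ as a Neumann series. Recall from the text following Lemma~\ref{lem_extension_theorem} (and, if one wants to be concrete, directly from the explicit coefficients $\frac{1-\alpha}{1+(1-\alpha)^2}$ and $\frac{\psi_\ast(x)}{1+(1-\alpha)^2}$, which are bounded uniformly for $\alpha\in[0,1]$ since $|\psi_\ast|\le 1$ and $\frac{t}{1+t^2}\le\frac12$) that $E(\theta_0)=E(\theta_0,\psi_\ast)$ and $\overline{E}(\theta_0)$ are bounded linear operators on $L^q(I)$ with
\[
    \Vert E(\theta_0)\Vert_{L^q(I)\to L^q(I)}\leq 2,
    \qquad
    \Vert \overline{E}(\theta_0)\Vert_{L^q(I)\to L^q(I)}\leq 2,
\]
and that these bounds do not depend on $\theta_0$ --- this $\alpha$-independence is exactly what is recorded in~\eqref{eq_boundary_condition_and_estimate_of_extension}--\eqref{eq_boundary_condition_and_estimate_of_extension_overline_E}. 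Accordingly I would take the constant $\delta$ in the hypothesis $\vert\theta-\theta_0\vert\le\delta$ to satisfy $\delta<1/2$, so that $(\theta-\theta_0)E(\theta_0)$ and $(\theta-\theta_0)\overline{E}(\theta_0)$ have operator norm on $L^q(I)$ at most $2\vert\theta-\theta_0\vert\le 2\delta<1$.

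First I would define
\[
    G(\theta_0,\theta)^{-1}
    := \sum_{k=0}^{\infty}(\theta-\theta_0)^k\,E(\theta_0)^k,
\]
which converges absolutely in operator norm by the bound above, and check that it is a two-sided inverse of $G(\theta_0,\theta)=I-(\theta-\theta_0)E(\theta_0)$ via the telescoping identity $(I-N)\sum_{k=0}^{n}N^k=I-N^{n+1}$ with $N=(\theta-\theta_0)E(\theta_0)$ and $N^{n+1}\to 0$. Summing the geometric majorant gives
\[
    \Vert G(\theta_0,\theta)^{-1}\Vert_{L^q(I)\to L^q(I)}
    \leq \sum_{k=0}^{\infty}\bigl(2\vert\theta-\theta_0\vert\bigr)^k
    = \frac{1}{1-2\vert\theta-\theta_0\vert},
\]
which is the asserted estimate (with $C=1$; a larger $C$ may be used if one prefers cruder constants). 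Replacing $E(\theta_0)$ by $\overline{E}(\theta_0)$ throughout produces $\overline{G}(\theta_0,\theta)^{-1}$ and the same bound verbatim. The two displayed identities are then immediate from~\eqref{eq_definition_of_tilde_v_vc}: since $\tilde v=G(\theta_0,\theta)v$, applying $G(\theta_0,\theta)^{-1}$ yields $v=G(\theta_0,\theta)^{-1}\tilde v$, whence $v_c=(\theta-\theta_0)E(\theta_0)v=(\theta-\theta_0)E(\theta_0)G(\theta_0,\theta)^{-1}\tilde v$.

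There is no serious obstacle here: this is the standard Neumann series for $I$ minus a contraction. The only points requiring care are quantitative --- one must fix $\delta<1/2$ (equivalently $2\delta<1$) so that the series converges and the denominator $1-2\vert\theta-\theta_0\vert$ stays positive, and one must use that the operator norm of $E(\theta_0)$ on $L^q(I)$ is uniform in $\theta_0$, which is precisely the $\alpha$-independence of the constant in Lemma~\ref{lem_extension_theorem}. If in later sections one needs $G(\theta_0,\theta)^{-1}$ and $\overline{G}(\theta_0,\theta)^{-1}$ to act boundedly on $H^{m,q}(I)$ for $m\in\Integer_{\geq 1}$ as well, the identical argument applies with the $H^{m,q}(I)$-operator norm of $E(\theta_0)$ --- again $\theta_0$-independent by~\eqref{eq_boundary_condition_and_estimate_of_extension} --- in place of $2$, at the cost of possibly shrinking $\delta$.
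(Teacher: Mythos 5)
Your proposal is correct and follows exactly the route the paper takes: the paper's proof consists of the single remark that the inverses exist "by a Neumann series argument," and you have simply written out that argument, using the $\theta_0$-independent bound $\Vert E(\theta_0)\Vert_{L^q(I)\to L^q(I)}\le 2$ from Lemma~\ref{lem_extension_theorem} and the constraint $\delta<1/2$. Your explicit geometric-series summation even recovers the stated constant $1/(1-2\vert\theta-\theta_0\vert)$, so nothing further is needed.
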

\begin{proof}
    By the definition of $v_c$ and $\tilde{v}$, the existence of the inverse operators and their properties follow from a Neumann series argument.
\end{proof}

Applying $\partial_t - \partial_x^2$ to $v_c$ and using Proposition \ref{prop_formula_for_dxm_E}, we deduce that
\begin{align*}
    \partial_t v_c - \partial_x^2 v_c
    & = \theta^\prime(\sigma) \sigma^\prime E(\theta_0) v
    + (
        \theta(\sigma)
        - \theta_0
    ) E(\theta_0) \partial_t v
    - (
        \theta(\sigma)
        - \theta_0
    ) (
        E(\theta_0) \partial_x^2 v
        + R_2(\theta_0) v
    )\\
    & = \frac{
        \theta^\prime(\sigma) \sigma^\prime
    }{
        \theta(\sigma)
        - \theta_0
    } v_c
    + (
        \theta(\sigma)
        - \theta_0
    ) E(\theta_0) (
        f_v(v, \sigma)
        + g_v
    )
    - (
        \theta(\sigma)
        - \theta_0
    ) R_2(\theta_0) v.
\end{align*}
Since $\tilde{v} = v - v_c$, we obtain the formula
\begin{align} \label{eq_parabolic_eq_for_tilde_v}
    \begin{split}
        \partial_t \tilde{v} - \partial_x^2 \tilde{v}
        & = f_v(v, \sigma)
        + g_v
        - (
            \partial_t v_c
            - \partial_x^2 v_c
        )\\
        & = - \frac{
            \theta^\prime(\sigma) \sigma^\prime
        }{
            \theta - \theta_0
        } v_c
        + G(\theta_0, \theta) (
            f_v(v, \sigma)
            + g_v
        )
        + (\theta - \theta_0) R_2(\theta_0) v\\
        & = - \theta^\prime(\sigma) \sigma^\prime E(\theta_0) G(\theta_0, \theta)^{-1} \tilde{v}
        + G(\theta_0, \theta) f_v(G(\theta_0, \theta)^{-1} \tilde{v}, \sigma)\\
        & + G(\theta_0, \theta) g_v
        + (\theta - \theta_0) R_2(\theta_0) G(\theta_0, \theta)^{-1} \tilde{v}.
    \end{split}
\end{align}
Since $v_c(0, x) = 0$, the initial data of $\tilde{v}$ is given by
\begin{align*}
    \tilde{v}(x, 0)
    = v_0.
\end{align*}

\begin{proposition} \label{prop_Hm_to_Hm_bound_of_G}
    Let $m \in \Integer_{\geq 0}$ and $q \in (0,\infty]$.
    Let $\alpha_0 = \alpha(0)$ and $\alpha$ is a positive continues function of its range within $(0,1)$.
    Let $\mathcal{G} = \overline{G}$ if $m$ is odd and $\mathcal{G} = G$ if $m$ is even.
    Then there exists $\delta >0 $, if $t - t_0 < \delta$, the formula 
    \begin{align*}
        \partial_x^m G(\alpha_0, \alpha)^{-1} \varphi
        = \mathcal{G}(\alpha_0, \alpha)^{-1} \partial_x^m \varphi
        + (\alpha - \alpha_0) \mathcal{G}(\alpha_0, \alpha)^{-1} R_m(\alpha_0) G(\alpha_0, \alpha)^{-1} \varphi
    \end{align*}
    holds.
    Consequently,
    \begin{gather*}
        \Vert
            G(\alpha_0, \alpha)^{-1}
        \Vert_{H^{1, q}(I) \rightarrow H^{1, q}(I)}
        \leq \frac{A_2}{
            (1 - 2(\alpha - \alpha_0))^2
        }.
    \end{gather*}
    Iteratively,
    \begin{align*}
        \Vert
            G(\alpha_0, \alpha)^{-1}
        \Vert_{H^{m, q}(I) \rightarrow H^{m, q}(I)}
        \leq \frac{A_m}{
            (1 - B_m (\alpha - \alpha_0))^{m+1}
        }
    \end{align*}
    for some constant $A_m, B_m >0$.
    The constant $A_m$ and $B_m$ which is the maximum of $\Vert E(\alpha_0) \Vert_{H^{m, q}(I) \rightarrow H^{m, q}(I)}$.
\end{proposition}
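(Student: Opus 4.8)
The plan is to derive the stated commutation identity by a single direct differentiation and then to promote the $L^q$-bound of Proposition~\ref{prop_inverse_of_G} to $H^{m,q}(I)$ by induction on $m$, exploiting that the remainder operator $R_m(\alpha_0)$ has order $m-1$.

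For the identity, set $v := G(\alpha_0,\alpha)^{-1}\varphi$. For $|\alpha-\alpha_0|$ small this is well defined in $L^q(I)$ by Proposition~\ref{prop_inverse_of_G}, and since $E(\alpha_0)$ is bounded on $H^{m,q}(I)$ by Lemma~\ref{lem_extension_theorem}, after shrinking $\delta$ the Neumann series $\sum_{k\ge 0}(\alpha-\alpha_0)^k E(\alpha_0)^k$ converges in $H^{m,q}(I)$ as well, so $v\in H^{m,q}(I)$ whenever $\varphi\in H^{m,q}(I)$. By construction $\varphi = G(\alpha_0,\alpha)v = v-(\alpha-\alpha_0)E(\alpha_0)v$; applying $\partial_x^m$ and inserting the even/odd formula of Proposition~\ref{prop_formula_for_dxm_E} for $\partial_x^m E(\alpha_0)v$, the top-order contribution is $E(\alpha_0)\partial_x^m v$ when $m$ is even and $\overline{E}(\alpha_0)\partial_x^m v$ when $m$ is odd (the single sign change coming from one differentiation turning $E$ into $\overline{E}$), while the leftover is exactly $R_m(\alpha_0)v$. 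Hence $\partial_x^m\varphi = \mathcal{G}(\alpha_0,\alpha)\partial_x^m v-(\alpha-\alpha_0)R_m(\alpha_0)v$ with $\mathcal{G}=G$ ($m$ even) or $\mathcal{G}=\overline{G}$ ($m$ odd); applying $\mathcal{G}(\alpha_0,\alpha)^{-1}$, which exists for small $\delta$ by the same Neumann-series argument (cf.\ Proposition~\ref{prop_inverse_of_G} for $\overline{G}$), and substituting $v=G(\alpha_0,\alpha)^{-1}\varphi$ gives the claimed formula.

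For the norm estimates I would argue inductively. When $m=1$, $R_1(\alpha_0)$ is the zeroth-order operator $u\mapsto \psi_\ast'(x)u(-x)/(1+(1-\alpha_0)^2)$, bounded on $L^q(I)$ with an $\alpha_0$-independent constant; combining the identity with $\|G(\alpha_0,\alpha)^{-1}\|_{L^q\to L^q},\ \|\overline{G}(\alpha_0,\alpha)^{-1}\|_{L^q\to L^q}\le C(1-2(\alpha-\alpha_0))^{-1}$ and $|\alpha-\alpha_0|\le\delta$ bounds $\|\partial_x G(\alpha_0,\alpha)^{-1}\varphi\|_{L^q}$ by $C(1-2(\alpha-\alpha_0))^{-2}\|\varphi\|_{H^{1,q}}$, and adding the $L^q$-estimate for $G(\alpha_0,\alpha)^{-1}\varphi$ itself produces the exponent $2$. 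For the step from $m-1$ to $m$, the explicit form of $R_m(\alpha_0)$ (a differential operator of order $m-1$ with $\alpha_0$-independent bounded coefficients, by Proposition~\ref{prop_formula_for_dxm_E} and Lemma~\ref{lem_extension_theorem}) gives $\|R_m(\alpha_0)\psi\|_{L^q}\le C\|\psi\|_{H^{m-1,q}}$; feeding the induction hypothesis $\|G(\alpha_0,\alpha)^{-1}\|_{H^{m-1,q}\to H^{m-1,q}}\le A_{m-1}(1-B_{m-1}(\alpha-\alpha_0))^{-m}$ and the $L^q$-bounds for $\mathcal{G}^{-1}$ into the identity, using the smallness of $|\alpha-\alpha_0|(1-2(\alpha-\alpha_0))^{-1}$ and $1-B(\alpha-\alpha_0)\le 1$ to unify the denominators, bounds $\|\partial_x^m G(\alpha_0,\alpha)^{-1}\varphi\|_{L^q}$ and hence $\|G(\alpha_0,\alpha)^{-1}\varphi\|_{H^{m,q}}$ by $A_m(1-B_m(\alpha-\alpha_0))^{-(m+1)}\|\varphi\|_{H^{m,q}}$, with $B_m=\max\{2,B_{m-1},\|E(\alpha_0)\|_{H^{m,q}\to H^{m,q}}\}$ and $A_m$ an explicit function of $A_{m-1}$, $C$ and $\delta$.

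The point to watch—more a matter of bookkeeping than a genuine obstacle—is the choice of $\delta=\delta(m)$: it must be small enough that all the Neumann series involved ($G^{-1}$ and $\overline{G}^{-1}$ on $L^q(I)$ and on each $H^{k,q}(I)$ for $k\le m$) converge simultaneously, that the auxiliary factors $|\alpha-\alpha_0|(1-2(\alpha-\alpha_0))^{-1}$ stay bounded, and that the recursion for $(A_m,B_m)$ closes with the advertised exponent $m+1$. Since every constant entering is $\alpha_0$-independent by Lemma~\ref{lem_extension_theorem} and the explicit expression for $R_m$, the parameter $\delta$ can be chosen uniformly in $\alpha_0$ on compact subsets of $(0,1)$. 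The commutation identity itself is only the chain rule of Proposition~\ref{prop_formula_for_dxm_E} together with the definition of $G$; the sole subtlety there is tracking the parity of $m$ against the alternation between $E$ and $\overline{E}$.
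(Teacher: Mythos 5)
Your argument is correct and follows essentially the same route as the paper: you derive the commutation identity by applying $\partial_x^m$ to $\varphi = (I-(\alpha-\alpha_0)E(\alpha_0))G(\alpha_0,\alpha)^{-1}\varphi$, invoking the even/odd chain-rule formula of Proposition~\ref{prop_formula_for_dxm_E} and solving for $\partial_x^m G(\alpha_0,\alpha)^{-1}\varphi$ via $\mathcal{G}^{-1}$, and then obtain the $H^{m,q}$ bounds by the same induction on $m$ using that $R_m(\alpha_0)$ has order $m-1$. Your added care about Neumann-series convergence in $H^{m,q}$ (not just $L^q$) and the uniformity of $\delta$ in $\alpha_0$ is a welcome tightening of bookkeeping the paper leaves implicit, but the substance is identical.
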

\begin{proof}
    Let $\mathcal{E} = \overline{E}$ if $m$ is odd and $\mathcal{E} = E$ if $m$ is even.
    We take $\delta>0$ so small that $G(\alpha_0, \alpha)^{-1}$ exists.
    From the definition, it is immediate that
    \begin{align*}
        \varphi
        = (I - (\alpha - \alpha_0) E(\alpha_0)) G(\alpha_0, \alpha)^{-1} \varphi.
    \end{align*}
    This identity and Proposition \ref{prop_formula_for_dxm_E} yield
    \begin{align} \label{eq_formula_for_dmxG}
        \begin{split}
            \partial_x^m \varphi
            & = \partial_x^m G(\alpha_0, \alpha)^{-1} \varphi \\
            & - (\alpha - \alpha_0) \mathcal{E}(\alpha_0) \partial_x^m G(\alpha_0, \alpha)^{-1} \varphi
            - (\alpha - \alpha_0) R_m(\alpha_0) G(\alpha_0, \alpha)^{-1} \varphi,
        \end{split}
    \end{align}
    and then
    \begin{align*}
        \partial_x^m G(\alpha_0, \alpha)^{-1} \varphi
        = \mathcal{G}(\alpha_0, \alpha)^{-1} \partial_x^m \varphi
        + (\alpha - \alpha_0) \mathcal{G}(\alpha_0, \alpha)^{-1} R_m(\alpha_0) G(\alpha_0, \alpha)^{-1} \varphi.
    \end{align*}
    Multiplying both sides by $G(\alpha_0, \alpha)^{-1}$, we iteratively obtain the desired formula.
    Moreover, we observe that
    \begin{align*}
        \Vert
            G(\alpha_0, \alpha)^{-1}
        \Vert_{H^{1, q}(I) \rightarrow H^{1, q}(I)}
        & \leq
        \frac{A_0}{
            1 - 2 (\alpha - \alpha_0)
        }
        + \frac{
            C \vert
                \alpha - \alpha_0
            \vert
        }{
            (1 - 2(\alpha - \alpha_0))^2
        }\\
        & \leq \frac{A_1}{
            (1 - 2(\alpha - \alpha_0))^2
        }.
    \end{align*}
    Since
    \begin{align*}
        & \Vert
            \partial_x^m G(\alpha_0, \alpha)^{-1}
        \Vert_{H^m(I) \rightarrow H^m(I)}\\
        & \leq  
        \frac{
            A_0
        }{
            1 - 2 (\alpha - \alpha_0)
        }
        + \frac{
            C \vert
                \alpha - \alpha_0
            \vert
        }{
            1 - 2 (\alpha - \alpha_0)
        }
        \Vert
            G(\alpha_0, \alpha)^{-1}
        \Vert_{H^{m - 1, q}(I) \rightarrow H^{m - 1, q}(I)},
    \end{align*}
    we obtain the bounds on the higher order Sobolev spaces by induction.
\end{proof}

\begin{proposition} \label{prop_estimate_to_difference_between_inverse_of_Gs}
    Let $p, q \in (1,\infty)$.
    Let $\alpha_1, \alpha_2$ be positive functions in $H^{1, p}(0, T)$ associated with the initial trace $\alpha_0$.
    Assume that $\sup_{0 < t < T} \vert \alpha_j(t) - \alpha_0 \vert \leq \delta$ for some $\delta \in (0, 1)$.
    Then
    \begin{align*}
        \Vert
            G(\alpha_0, \alpha_1)^{-1}
            - G(\alpha_0, \alpha_2)^{-1}
        \Vert_{L^q(I) \rightarrow L^q(I)}
        \leq \frac{
            C \vert
                \alpha_1
                - \alpha_2
            \vert
        }{
            (1 - 2 (\alpha_1 - \alpha_0))
            (1 - 2 (\alpha_2 - \alpha_0))
        },
    \end{align*}
    and
    \begin{align*}
        & \Vert
            G(\alpha_0, \alpha_1)^{-1}
            - G(\alpha_0, \alpha_2)^{-1}
        \Vert_{H^{m, p}(I) \rightarrow H^{m, p}(I)}\\
        & \leq \frac{
            A_m^2 \vert
                \alpha_1
                - \alpha_2
            \vert
        }{
            (1 - B_m (\alpha_1 - \alpha_0))^{m+1}
            (1 - B_m (\alpha_2 - \alpha_0))^{m+1}
        }.
    \end{align*}
    Consequently,
    \begin{align} \label{eq_estimate_for_difference_of_G_inverses}
        \begin{split}
            & \Vert
                G(\alpha_0, \alpha_1)^{-1}
                - G(\alpha_0, \alpha_2)^{-1}
            \Vert_{
                L^p(0, T; H^{m, q}(I))
                \rightarrow L^p(0, T; H^{m, q}(I))} \\
            & \leq C T^{1 - 1/p} \Vert
                \alpha_1
                - \alpha_2
            \Vert_{H^{1, p}(0, T)}
        \end{split}
    \end{align}
    for some constants $A_m, B_m > 0$.
\end{proposition}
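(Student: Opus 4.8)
The plan is to reduce everything to the elementary second–resolvent identity for the operators $G(\alpha_0,\alpha)=I-(\alpha-\alpha_0)E(\alpha_0)$. Fix $t\in(0,T)$. Since
$G(\alpha_0,\alpha_2(t))-G(\alpha_0,\alpha_1(t))=(\alpha_1(t)-\alpha_2(t))\,E(\alpha_0)$,
and both inverses exist (by Proposition~\ref{prop_inverse_of_G} on $L^q(I)$ and by Proposition~\ref{prop_Hm_to_Hm_bound_of_G} on $H^{m,q}(I)$, provided $\delta$ is taken small enough that those statements apply), multiplying on the left by $G(\alpha_0,\alpha_1)^{-1}$ and on the right by $G(\alpha_0,\alpha_2)^{-1}$ gives, at time $t$,
\begin{align*}
    G(\alpha_0,\alpha_1)^{-1}-G(\alpha_0,\alpha_2)^{-1}
    =(\alpha_1-\alpha_2)\,G(\alpha_0,\alpha_1)^{-1}\,E(\alpha_0)\,G(\alpha_0,\alpha_2)^{-1},
\end{align*}
where the scalar $\alpha_1-\alpha_2$ has been pulled out of the operator product.

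First I would take $\Vert\cdot\Vert_{L^q(I)\rightarrow L^q(I)}$ of both sides, using $\Vert E(\alpha_0)\Vert_{L^q(I)\rightarrow L^q(I)}\le 2$ together with the resolvent estimate $\Vert G(\alpha_0,\alpha_j)^{-1}\Vert_{L^q(I)\rightarrow L^q(I)}\le C/(1-2(\alpha_j-\alpha_0))$ from Proposition~\ref{prop_inverse_of_G}; this yields the first displayed inequality at once. For the $H^{m,q}(I)\rightarrow H^{m,q}(I)$ bound I would argue identically, now invoking $\Vert G(\alpha_0,\alpha_j)^{-1}\Vert_{H^{m,q}(I)\rightarrow H^{m,q}(I)}\le A_m/(1-B_m(\alpha_j-\alpha_0))^{m+1}$ from Proposition~\ref{prop_Hm_to_Hm_bound_of_G} and the corresponding bound for $E(\alpha_0)$ on $H^{m,q}(I)$, absorbing the extra factor into $A_m$ (possibly enlarging the constants $A_m,B_m$).

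For the space–time estimate \eqref{eq_estimate_for_difference_of_G_inverses} I would integrate the pointwise-in-$t$ bound just obtained. Since $\vert\alpha_j(t)-\alpha_0\vert\le\delta<1$, the denominators stay bounded away from zero uniformly in $t$, so for $\varphi\in L^p(0,T;H^{m,q}(I))$,
\begin{align*}
    \Vert(G(\alpha_0,\alpha_1)^{-1}-G(\alpha_0,\alpha_2)^{-1})\varphi\Vert_{L^p(0,T;H^{m,q}(I))}
    \le C\Big(\int_0^T\vert\alpha_1(t)-\alpha_2(t)\vert^p\,\Vert\varphi(t)\Vert_{H^{m,q}(I)}^p\,dt\Big)^{1/p}.
\end{align*}
Because $\alpha_1$ and $\alpha_2$ share the initial trace $\alpha_0$, the difference $\alpha_1-\alpha_2$ vanishes at $t=0$, so by the fundamental theorem of calculus and H\"older's inequality,
\begin{align*}
    \vert\alpha_1(t)-\alpha_2(t)\vert
    =\Big\vert\int_0^t(\alpha_1-\alpha_2)^\prime(s)\,ds\Big\vert
    \le t^{1-1/p}\Vert\alpha_1-\alpha_2\Vert_{H^{1,p}(0,T)}
    \le T^{1-1/p}\Vert\alpha_1-\alpha_2\Vert_{H^{1,p}(0,T)},
\end{align*}
and inserting this into the previous display gives the assertion with constant $CT^{1-1/p}$.

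The argument is essentially routine; the only points requiring care are (i) fixing $\delta$ small enough, uniformly in $t$, that Propositions~\ref{prop_inverse_of_G} and~\ref{prop_Hm_to_Hm_bound_of_G} both hold and the denominators $1-2(\alpha_j-\alpha_0)$, $1-B_m(\alpha_j-\alpha_0)$ remain bounded below, and (ii) observing that the quantitative $T^{1-1/p}$ gain in \eqref{eq_estimate_for_difference_of_G_inverses} is exactly the embedding $H^{1,p}(0,T)\hookrightarrow C([0,T])$ applied to $\alpha_1-\alpha_2$, which is legitimate only because that difference vanishes at $t=0$. I do not expect any serious obstacle beyond this bookkeeping.
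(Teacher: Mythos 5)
Your proposal is correct and follows essentially the same route as the paper: the second resolvent identity $G(\alpha_0,\alpha_1)^{-1}-G(\alpha_0,\alpha_2)^{-1}=(\alpha_1-\alpha_2)\,G(\alpha_0,\alpha_1)^{-1}E(\alpha_0)\,G(\alpha_0,\alpha_2)^{-1}$ combined with the operator bounds of Propositions~\ref{prop_inverse_of_G} and~\ref{prop_Hm_to_Hm_bound_of_G}, and the Sobolev embedding $H^{1,p}(0,T)\hookrightarrow C([0,T])$ applied to $\alpha_1-\alpha_2$ (which vanishes at $t=0$) to extract the factor $T^{1-1/p}$. You even correctly retain the middle factor $E(\alpha_0)$, which the paper's displayed identity drops in an apparent typo.
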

\begin{proof}
    The estimates are derived from the resolvent equation
    \begin{align*}
        G(\alpha_0, \alpha_1)^{-1}
        - G(\alpha_0, \alpha_2)^{-1}
        & = - G(\alpha_0, \alpha_1)^{-1}
        \left(
            G(\alpha_0, \alpha_1)
            - G(\alpha_0, \alpha_2)
        \right)
        G(\alpha_0, \alpha_2)^{-1}\\
        & = (
            \alpha_1
            - \alpha_2
        ) G(\alpha_0, \alpha_1)^{-1}
        G(\alpha_0, \alpha_2)^{-1}.
    \end{align*}
    and Proposition \ref{prop_Hm_to_Hm_bound_of_G}.
    The order of $T$ in (\ref{eq_estimate_for_difference_of_G_inverses}) is follows from the Sobolev embedding.
\end{proof}

\begin{proposition} \label{prop_bound_of_dtm_G}
    Let $p, q \in (1,\infty)$.
    Let $\alpha$ is a positive function in $H^{1, p}(0, T)$ with the initial trace $\alpha_0$.
    Assume that $\sup_{0 < t < T} \vert \alpha(t) - \alpha_0 \vert \leq \delta$ for some $\delta \in (0, 1)$.
    Then
    \begin{align*}
        & \partial_t [G(\alpha_0, \alpha(t))^{-1} \varphi (t)]\\
        & = G(\alpha_0, \alpha(t))^{-1} \partial_t \varphi (t)
        + \frac{
            - \alpha^\prime(t)
        }{
            (1 + (1 - \alpha(t))^2)
        }
        \left(
            G(\alpha_0, \alpha(t))^{-1}
        \right)^2 \varphi\\
        & + \frac{
            2 (
                1 - \alpha(t)
            )
            \alpha^\prime(t)
        }{
            (1 + (1 - \alpha(t))^2)^2
        }
        G(\alpha_0, \alpha(t))^{-1} E(\alpha(t)) G(\alpha_0, \alpha(t))^{-1} \varphi (t)
        \quad \text{a.a. $t > 0$.}
    \end{align*}
    for all $\varphi \in H^{1, q}(0,T; L^q(I))$.
    Consequently, it follows that
    \begin{align*}
        \Vert
            \partial_t G(\alpha_0, \alpha(t))^{-1} \varphi(t)
        \Vert_{L^{q}(I)}
        \leq C
        \Vert
            \partial_t \varphi(t)
        \Vert_{L^{q}(I)}
        +
        C \vert
            \alpha^\prime(t)
        \vert
        \Vert
            \varphi(t)
        \Vert_{L^{q}(I)}, \quad
        a.a. \, t > 0,
    \end{align*}
    and
    \begin{align*}
        \Vert
            \partial_t G(\alpha_0, \alpha)^{-1} \varphi
        \Vert_{L^p(0, T; L^q(I))}
        & \leq C
        (
            1
            + \Vert
                \alpha
            \Vert_{H^{1, p}(0, T)}
        )
        \Vert
            \varphi
        \Vert_{H^{1, p}(0, T; L^q(I))}
    \end{align*}
    for some $C > 0$.
\end{proposition}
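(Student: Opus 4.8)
The plan is to differentiate the Neumann-series representation of $G(\alpha_0, \alpha(t))^{-1}$ term by term in $t$ and then extract the two estimates from the uniform operator-norm bounds already obtained in Propositions~\ref{prop_inverse_of_G} and~\ref{prop_Hm_to_Hm_bound_of_G}. First I would choose $\delta \in (0, 1/2)$ so small that the expansion
\begin{align*}
    G(\alpha_0, \alpha(t))^{-1}
    = \sum_{k = 0}^\infty (\alpha(t) - \alpha_0)^k E(\alpha_0)^k
\end{align*}
converges in the operator norm of $\mathcal{L}(L^q(I))$, uniformly in $t \in [0, T]$; this holds because $\Vert E(\alpha_0) \Vert_{L^q(I) \to L^q(I)} \leq 2$ and $\sup_{0 < t < T} \vert \alpha(t) - \alpha_0 \vert \leq \delta$. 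Since $\alpha \in H^{1, p}(0, T) \hookrightarrow C^{0, 1 - 1/p}([0, T])$ is absolutely continuous, each summand $t \mapsto (\alpha(t) - \alpha_0)^k E(\alpha_0)^k$ lies in $H^{1, p}(0, T; \mathcal{L}(L^q(I)))$ with derivative $k (\alpha(t) - \alpha_0)^{k - 1} \alpha'(t) E(\alpha_0)^k$ for a.a.\ $t$, and the series of these derivatives converges in $L^p(0, T; \mathcal{L}(L^q(I)))$ because $\Vert \vert \alpha - \alpha_0 \vert^{k - 1} \vert \alpha' \vert \Vert_{L^p(0, T)} \leq \delta^{k - 1} \Vert \alpha' \Vert_{L^p(0, T)}$ and $\sum_{k} k \delta^{k - 1} 2^k < \infty$. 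Hence $t \mapsto G(\alpha_0, \alpha(t))^{-1}$ belongs to $H^{1, p}(0, T; \mathcal{L}(L^q(I)))$ and may be differentiated termwise.

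Resumming the differentiated series — equivalently, applying $\partial_t G(\alpha_0, \alpha(t))^{-1} = - G(\alpha_0, \alpha(t))^{-1} \, [\partial_t G(\alpha_0, \alpha(t))] \, G(\alpha_0, \alpha(t))^{-1}$ and computing $\partial_t G(\alpha_0, \alpha(t))$ by the chain rule, which writes it as $\alpha'(t)$ times a bounded operator built from the identity and $E(\alpha(t))$ with coefficients obtained by differentiating the rational functions $\tfrac{1 - \alpha}{1 + (1 - \alpha)^2}$ and $\tfrac{1}{1 + (1 - \alpha)^2}$ defining $E$ in Lemma~\ref{lem_extension_theorem}; this is the origin of the prefactors $\tfrac{1}{1 + (1 - \alpha(t))^2}$ and $\tfrac{2(1 - \alpha(t))}{(1 + (1 - \alpha(t))^2)^2}$ in the statement — gives the asserted formula for $\partial_t G(\alpha_0, \alpha(t))^{-1}$. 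Multiplying by $\varphi(t) \in H^{1, p}(0, T; L^q(I))$ and invoking the Leibniz rule for the product of the $H^{1, p}$-regular operator-valued function $t \mapsto G(\alpha_0, \alpha(t))^{-1}$ with the $L^q(I)$-valued Sobolev function $\varphi$ — legitimate since $H^{1, p}(0, T) \hookrightarrow C([0, T])$ makes both factors continuous, so their product again lies in $H^{1, p}(0, T; L^q(I))$ — yields the displayed identity for $\partial_t [G(\alpha_0, \alpha(t))^{-1} \varphi(t)]$, for a.a.\ $t$.

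In that identity every term except $G(\alpha_0, \alpha(t))^{-1} \partial_t \varphi(t)$ carries a factor $\alpha'(t)$ and is a composition of $G(\alpha_0, \alpha(t))^{-1}$ and $E(\alpha(t))$ against the bounded rational coefficients above. Proposition~\ref{prop_inverse_of_G} gives $\Vert G(\alpha_0, \alpha(t))^{-1} \Vert_{L^q(I) \to L^q(I)} \leq C$ uniformly for $\vert \alpha(t) - \alpha_0 \vert \leq \delta$, while $\Vert E(\alpha(t)) \Vert_{L^q(I) \to L^q(I)} \leq 2$ by construction and the coefficients stay bounded for $\alpha \in (0, 1)$; this produces the pointwise estimate $\Vert \partial_t [G(\alpha_0, \alpha(t))^{-1} \varphi(t)] \Vert_{L^q(I)} \leq C \Vert \partial_t \varphi(t) \Vert_{L^q(I)} + C \vert \alpha'(t) \vert \Vert \varphi(t) \Vert_{L^q(I)}$ for a.a.\ $t$. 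Taking the $L^p(0, T)$ norm, the first term is bounded by $C \Vert \partial_t \varphi \Vert_{L^p(0, T; L^q(I))}$, and for the second I would use Hölder's inequality $\Vert f g \Vert_{L^p(0, T)} \leq \Vert f \Vert_{L^p(0, T)} \Vert g \Vert_{L^\infty(0, T)}$ with $f = \vert \alpha' \vert$ and $g = \Vert \varphi(\cdot) \Vert_{L^q(I)}$, followed by $\Vert \alpha' \Vert_{L^p(0, T)} \leq \Vert \alpha \Vert_{H^{1, p}(0, T)}$ and the Sobolev embedding $H^{1, p}(0, T; L^q(I)) \hookrightarrow C([0, T]; L^q(I))$ (valid since $p > 1$) to bound $\Vert \varphi \Vert_{L^\infty(0, T; L^q(I))}$ by $C \Vert \varphi \Vert_{H^{1, p}(0, T; L^q(I))}$. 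Combining gives the claimed inequality, with a constant that may depend on $T$ through the last embedding.

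The only genuinely delicate point is the first two paragraphs: since $\alpha$ is merely in $H^{1, p}(0, T)$ rather than $C^1$, one must justify carefully that $t \mapsto G(\alpha_0, \alpha(t))^{-1}$ is weakly differentiable in the operator norm with the chain-rule derivative, and that the Leibniz rule applies to $G(\alpha_0, \alpha(t))^{-1} \varphi(t)$ in the Bochner--Sobolev setting. The uniformly convergent Neumann series together with $H^{1, p}(0, T) \hookrightarrow C([0, T])$ is precisely what makes this rigorous, and it simultaneously shows $t \mapsto G(\alpha_0, \alpha(t))^{-1} \varphi(t) \in H^{1, p}(0, T; L^q(I))$, so that the stated identity is an equality between $L^q(I)$-valued functions; everything after that is bookkeeping with the uniform operator-norm bounds.
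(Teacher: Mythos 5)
Your overall strategy --- differentiate the inverse via $\partial_t G^{-1} = -G^{-1}(\partial_t G)G^{-1}$ (equivalently, differentiate the identity $\varphi = G(\alpha_0,\alpha(t))\,[G(\alpha_0,\alpha(t))^{-1}\varphi]$ in $t$ and solve for $\partial_t[G^{-1}\varphi]$), apply the Leibniz rule, and then read off the two norm estimates from the uniform operator bounds of Proposition~\ref{prop_inverse_of_G} --- is the same as the paper's, and your treatment of the two ``consequently'' estimates (pointwise bound, then H\"older with $\Vert\alpha'\Vert_{L^p}\Vert\varphi\Vert_{L^\infty_t L^q_x}$ and the embedding $H^{1,p}\hookrightarrow C$) is correct and in fact more careful than the paper's one-line ``applying the norms''. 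The Neumann-series justification of weak differentiability of $t\mapsto G(\alpha_0,\alpha(t))^{-1}$ in the Bochner--Sobolev setting is a welcome addition that the paper omits.

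There is, however, an internal contradiction in your derivation of the displayed identity. With the paper's definition $G(\alpha_0,\alpha)=I-(\alpha-\alpha_0)E(\alpha_0)$, the operator $E(\alpha_0)$ and its rational coefficients $\tfrac{1-\alpha_0}{1+(1-\alpha_0)^2}$ and $\tfrac{1}{1+(1-\alpha_0)^2}$ are frozen at $\alpha_0$ and hence constant in $t$; termwise differentiation of your series $\sum_k(\alpha(t)-\alpha_0)^kE(\alpha_0)^k$ gives exactly $\partial_tG^{-1}=\alpha'(t)\,G^{-1}E(\alpha_0)G^{-1}$ --- a single term, with $E(\alpha_0)$, and with no derivatives of rational functions of $\alpha(t)$ appearing. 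This cannot ``resum'' to the stated two-term expression involving $E(\alpha(t))$ and the prefactors $\tfrac{-\alpha'}{1+(1-\alpha)^2}$ and $\tfrac{2(1-\alpha)\alpha'}{(1+(1-\alpha)^2)^2}$; those would arise only if $E$ were evaluated at the time-dependent $\alpha(t)$ inside $G$. You assert the two computations are ``equivalent'' without checking, but they are not, and at most one of them can yield the proposition's formula. (To be fair, the same tension exists in the paper's own proof, which differentiates the $\alpha$-dependence of $E$'s coefficients even though the definition fixes them at $\alpha_0$.) Since both candidate formulas have the structure $G^{-1}\partial_t\varphi+\alpha'(t)\times(\text{uniformly bounded operators})\,\varphi$, your norm estimates survive either way; but as a proof of the identity as stated, the argument has a gap that you must resolve by deciding which definition of $G$ is actually in force and carrying the corresponding single computation through.
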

\begin{proof}
    Similar to (\ref{eq_formula_for_dmxG}), we deduce that
    \begin{align*}
        \partial_t \varphi
        & = \partial_t [G(\alpha_0, \alpha(t))^{-1} \varphi]
        - E(\alpha_0, \alpha(t)) \partial_t [G(\alpha_0, \alpha(t))^{-1} \varphi]\\
        & - \frac{
            - \alpha^\prime(t) (
                1
                + (1 - \alpha(t))^2
            )
            + 2 (
                1 - \alpha(t)
            )^2
            \alpha^\prime(t)
        }{
            (1 + (1 - \alpha(t))^2)^2
        }
        [G(\alpha_0, \alpha(t))^{-1} \varphi](x, t)\\
        & - \frac{
            2 \psi_\ast (x) (
                1 - \alpha(t)
            )
            \alpha^\prime(t)
        }{
            (1 + (1 - \alpha(t))^2)^2
        }
        [G(\alpha_0, \alpha(t))^{-1} \varphi](- x, t)\\
        & = G(\alpha_0, \alpha(t)) \partial_t [G(\alpha_0, \alpha(t))^{-1} \varphi]\\
        & - \frac{
            - \alpha^\prime(t)
        }{
            (1 + (1 - \alpha(t))^2)
        }
        [G(\alpha_0, \alpha(t))^{-1} \varphi]\\
        & - \frac{
            2 (
                1 - \alpha(t)
            )
            \alpha^\prime(t)
        }{
            1 + (1 - \alpha(t))^2
        }
        E(\alpha(t)) G(\alpha_0, \alpha(t))^{-1} \varphi.
    \end{align*}
    Then we obtain the commutator formula
    \begin{align*}
        \partial_t [G(\alpha_0, \alpha(t))^{-1} \varphi]
        & = G(\alpha_0, \alpha(t))^{-1} \partial_t \varphi
        + \frac{
            - \alpha^\prime(t)
        }{
            (1 + (1 - \alpha(t))^2)
        }
        \left(
            G(\alpha_0, \alpha(t))^{-1}
        \right)^2 \varphi\\
        & + \frac{
            2 (
                1 - \alpha(t)
            )
            \alpha^\prime(t)
        }{
            (1 + (1 - \alpha(t))^2)^2
        }
        G(\alpha_0, \alpha(t))^{-1} E(\alpha(t)) G(\alpha_0, \alpha(t))^{-1} \varphi.
    \end{align*}
    Applying $\Vert \cdot \Vert_{L^q(I)}$ and $\Vert \cdot \Vert_{L^p(0, T; L^q(I))}$ to the above equation, we also deduce the bounds.
\end{proof}

\begin{proposition} \label{prop_bound_of_difference_of_dtm_G}
    Assume the same assumptions as in Proposition \ref{prop_estimate_to_difference_between_inverse_of_Gs} for $\alpha_1, \alpha_2$, and $\alpha$.
    Then
    \begin{align*}
        & \Vert
            \partial_t \left[
                \left(
                    G(\alpha_0, \alpha_1(t))^{-1}
                    - G(\alpha_0, \alpha_2(t))^{-1}
                \right) \varphi(t)
            \right]
        \Vert_{L^q(I)}\\
        & \leq C
        \vert
            \alpha_1(t)
            - \alpha_2(t)
        \vert
        \Vert
            \partial_t \varphi(t)
        \Vert_{L^q(I)}\\
        & + C
        \left(
            (
                1
                + \sum_{j=1,2}
                    \vert
                        \alpha_j(t)
                    \vert
            )
            \vert
                \alpha_1^\prime(t)
                - \alpha_2^\prime(t)
            \vert
            +
            \sum_{j = 1,2}
                \vert
                    \alpha_j^\prime(t)
                \vert 
            \vert
                \alpha_1(t)
                - \alpha_2(t)
            \vert
        \right)
        \Vert
            \varphi(t)
        \Vert_{L^q(I)}\\
        & \quad\quad\quad\quad\quad\quad\quad\quad\quad\quad\quad\quad\quad\quad\quad\quad\quad\quad\quad\quad\quad\quad\quad\quad
        \text{for a. a. $t > 0$}
    \end{align*}
    and
    \begin{align*}
        & \Vert
            \partial_t [
                G(\alpha_0, \alpha_1)^{-1}
                - G(\alpha_0, \alpha_2)^{-1}
            ] \varphi
        \Vert_{L^p(0, T; L^q(I))}\\
        & \leq \tilde{C}
        (
            1
            + \sum_{j=1,2}
                \Vert
                    \alpha_j
                \Vert_{H^{1, p}(0,T)}
        )
        \Vert
            \alpha_1
            - \alpha_2
        \Vert_{H^{1,p}(0,T)}
        \Vert
            \varphi
        \Vert_{H^{1, p}(0, T; L^q(I))},
    \end{align*}
    for all $\varphi \in H^{1, q}(0,T; L^q(I))$.
    The constant $\tilde{C} > 0$ tends to zero as $T \rightarrow 0$.
\end{proposition}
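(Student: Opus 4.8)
The plan is to start from the commutator formula of Proposition~\ref{prop_bound_of_dtm_G}, apply it once for $\alpha_1$ and once for $\alpha_2$, and subtract. Abbreviating $B_j := G(\alpha_0,\alpha_j(t))^{-1}$, that formula reads $\partial_t[B_j\varphi]=B_j\partial_t\varphi+a_j(t)\,B_j^2\varphi+c_j(t)\,B_jE(\alpha_j(t))B_j\varphi$ with the scalar weights $a_j(t)=\tfrac{-\alpha_j'(t)}{1+(1-\alpha_j(t))^2}$ and $c_j(t)=\tfrac{2(1-\alpha_j(t))\alpha_j'(t)}{(1+(1-\alpha_j(t))^2)^2}$. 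Subtracting and inserting telescoping differences — $a_1B_1^2-a_2B_2^2=(a_1-a_2)B_1^2+a_2(B_1-B_2)B_1+a_2B_2(B_1-B_2)$, and likewise $c_1B_1E(\alpha_1)B_1-c_2B_2E(\alpha_2)B_2=(c_1-c_2)B_1E(\alpha_1)B_1+c_2\big[(B_1-B_2)E(\alpha_1)B_1+B_2(E(\alpha_1)-E(\alpha_2))B_1+B_2E(\alpha_2)(B_1-B_2)\big]$ — expresses $\partial_t[(B_1-B_2)\varphi]$ as a finite sum in which each summand is a product of a scalar weight ($a_j$, $c_j$, $a_1-a_2$, or $c_1-c_2$), finitely many uniformly bounded operators among $B_j$ and $E(\alpha_j)$, and at most one ``small'' operator factor $B_1-B_2$ or $E(\alpha_1)-E(\alpha_2)$, applied to $\varphi$ or $\partial_t\varphi$.

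Each factor is then controlled by a result already in hand. The uniform bounds $\|B_j\|_{L^q\to L^q}\le C$ come from Proposition~\ref{prop_inverse_of_G} (using $|\alpha_j-\alpha_0|\le\delta$ with $\delta$ small), and $\|E(\alpha_j)\|_{L^q\to L^q}\le 2$ from Lemma~\ref{lem_extension_theorem}. The Lipschitz bound $\|B_1-B_2\|_{L^q\to L^q}\le C|\alpha_1(t)-\alpha_2(t)|$ is exactly Proposition~\ref{prop_estimate_to_difference_between_inverse_of_Gs}, while $\|E(\alpha_1(t))-E(\alpha_2(t))\|_{L^q\to L^q}\le C|\alpha_1(t)-\alpha_2(t)|$ is immediate from the explicit rational dependence of $E(\alpha)$ on $\alpha$ in~\eqref{eq_form_of_the_extension_operator_E}, the denominators being bounded below by $1$. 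For the scalars the same observation gives $|a_j|,|c_j|\le C(1+|\alpha_j|)|\alpha_j'|$ and, after one more telescoping in the numerators and denominators, $|a_1-a_2|+|c_1-c_2|\le C(1+|\alpha_1|+|\alpha_2|)\big(|\alpha_1'-\alpha_2'|+(|\alpha_1'|+|\alpha_2'|)|\alpha_1-\alpha_2|\big)$. Substituting these into the expansion and collecting terms yields exactly the claimed pointwise-in-$t$ inequality.

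The space-time estimate follows by taking the $L^p(0,T)$ norm in $t$ of the pointwise inequality. The first term is handled by $\int_0^T|\alpha_1-\alpha_2|^p\|\partial_t\varphi\|_{L^q}^p\,dt\le\|\alpha_1-\alpha_2\|_{L^\infty(0,T)}^p\|\partial_t\varphi\|_{L^p(0,T;L^q)}^p$, and since $\alpha_1$ and $\alpha_2$ share the initial trace $\alpha_0$ one has $\|\alpha_1-\alpha_2\|_{L^\infty(0,T)}\le T^{1-1/p}\|\alpha_1-\alpha_2\|_{H^{1,p}(0,T)}$, which produces a genuine $T\to0$ factor. For the remaining terms one pulls out $\|\varphi\|_{C([0,T];L^q)}$ via $H^{1,p}(0,T;L^q)\hookrightarrow C([0,T];L^q)$ and $\|\alpha_j\|_{L^\infty(0,T)}$ via $H^{1,p}(0,T)\hookrightarrow L^\infty(0,T)$; the term weighted by $|\alpha_1-\alpha_2|$ again gains the $T^{1-1/p}$ factor as above, while the term weighted only by $|\alpha_1'-\alpha_2'|$ is bounded using $\|\alpha_1'-\alpha_2'\|_{L^p(0,T)}\le\|\alpha_1-\alpha_2\|_{H^{1,p}(0,T)}$ and the accompanying constant tends to $0$ as $T\to0$ by absolute continuity of the Lebesgue integral $\int_0^T|\alpha_1'-\alpha_2'|^p\,dt$. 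Assembling these contributions gives the stated bound with $\tilde C=\tilde C(T)\to0$.

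I expect the only real difficulty to be the bookkeeping in the last two steps: organizing the (many) telescoped summands so that each one carries either a factor $|\alpha_1-\alpha_2|$ or $|\alpha_1'-\alpha_2'|$, and then verifying that each resulting space-time term contributes a quantity that is small as $T\to0$. All of the individual operator and scalar estimates are routine consequences of Propositions~\ref{prop_inverse_of_G}, \ref{prop_estimate_to_difference_between_inverse_of_Gs}, and~\ref{prop_bound_of_dtm_G}, so no new analytic input is needed beyond careful accounting.
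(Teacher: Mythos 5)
Your proposal is correct and follows essentially the same route as the paper: subtract the commutator formula of Proposition~\ref{prop_bound_of_dtm_G} for $\alpha_1$ and $\alpha_2$, telescope the resulting operator and scalar differences, control them via the Lipschitz continuity of the rational coefficients together with Propositions~\ref{prop_inverse_of_G} and~\ref{prop_estimate_to_difference_between_inverse_of_Gs}, and then take the $L^p(0,T)$ norm using the Sobolev embedding and the common initial trace $\alpha_0$ to extract the $T^{1-1/p}$ factors. Your explicit telescoping identities simply make precise what the paper compresses into the constant $M$, so no substantive difference.
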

\begin{proof}
    We observe that
    \begin{align*}
        & \partial_t \left[
            \left(
                G(\alpha_0, \alpha_1(t))^{-1}
                - G(\alpha_0, \alpha_2(t))^{-1}
            \right) \varphi
        \right]\\
        & = [
            G(\alpha_0, \alpha_1(t))^{-1}
            - G(\alpha_0, \alpha_2(t))^{-1}
        ] \partial_t \varphi\\
        & + \left[
            \frac{
                - \alpha_1^\prime(t)
            }{
                (1 + (1 - \alpha_1(t))^2)
            }
            \left(
                G(\alpha_0, \alpha_1(t))^{-1}
            \right)^2
            - \frac{
                - \alpha_2^\prime(t)
            }{
                (1 + (1 - \alpha_2(t))^2)
            }
            \left(
                G(\alpha_0, \alpha_2(t))^{-1}
            \right)^2
        \right] \varphi\\
        & + \Biggl [
            \frac{
                2 (
                    1 - \alpha_1(t)
                )
                \alpha_1^\prime(t)
            }{
                (1 + (1 - \alpha_1(t))^2)^2
            }
            G(\alpha_0, \alpha_1(t))^{-1} E(\alpha_1(t)) G(\alpha_0, \alpha_1(t))^{-1}\\
        & \quad\quad\quad
            - \frac{
                2 (
                    1 - \alpha_2(t)
                )
                \alpha_2^\prime(t)
            }{
                (1 + (1 - \alpha_2(t))^2)^2
            }
            G(\alpha_0, \alpha_2(t))^{-1} E(\alpha_2(t)) G(\alpha_0, \alpha_2(t))^{-1}
        \Biggr ]
        \varphi.
    \end{align*}
    Let $M > 0$ be the maximum of the Lipschitz norms of the functions $\frac{1}{1 + (1 - x)^2}$ and $\frac{2(1 - x)^2}{(1 + (1 - x)^2)^2}$.  
    Then, by Proposition~\ref{prop_bound_of_dtm_G}, we find that the $L^q(I)$-norm of the third term is bounded by $\Vert \varphi \Vert_{L^q(I)}$ times
    \begin{align*}
        & C M
        \vert
            \alpha_1(t)
            - \alpha_2(t)
        \vert
        \sum_{j=1,2} 
            \vert
                \alpha_j^\prime(t)
            \vert
        + C M \vert
            \alpha_1^\prime(t)
            - \alpha_2^\prime(t)
        \vert
        \sum_{j=1,2} 
            \vert
                \alpha_j(t)
            \vert.
    \end{align*}
    Therefore,
    \begin{align*}
        & \Vert
            \partial_t [
                G(\alpha_0, \alpha_1(t))^{-1}
                - G(\alpha_0, \alpha_2(t))^{-1}
            ] \varphi
        \Vert_{L^q(I)}\\
        & \leq C
        \vert
            \alpha_1(t)
            - \alpha_2(t)
        \vert
        \Vert
            \partial_t \varphi
        \Vert_{L^q(I)}\\
        & + C M
        \left(
            (
                1
                + \sum_{j=1,2}
                    \vert
                        \alpha_j(t)
                    \vert
            )
            \vert
                \alpha_1^\prime(t)
                - \alpha_2^\prime(t)
            \vert
            +
            \sum_{j = 1,2}
                \vert
                    \alpha_j^\prime(t)
                \vert 
            \vert
                \alpha_1(t)
                - \alpha_2(t)
            \vert
        \right)
        \Vert
            \varphi
        \Vert_{L^q(I)}.
    \end{align*}
    This implies the first estimate.
    Moreover, we observe that
    \begin{align*}
        & \Vert
            \partial_t [
                G(\alpha_0, \alpha_1)^{-1}
                - G(\alpha_0, \alpha_2)^{-1}
            ] \varphi
        \Vert_{L^p(0, T; L^q(I))}\\
        & \leq C T^{1 - 1/p}
        \Vert
            \alpha_1
            - \alpha_2
        \Vert_{H^{1, p}(0,T)}
        \Vert
            \partial_t \varphi
            \Vert_{L^p(0, T; L^q(I))}\\
        & + C T^{1 - 1/p}
        (
            1
            + T^{1 - 1/p} \sum_{j=1,2}
                \Vert
                    \alpha_j
                \Vert_{H^{1, p}(0,T)}
        )
        \Vert
            \alpha_1^\prime
            - \alpha_2^\prime
        \Vert_{L^p(0,T)}
        \Vert
            \varphi
        \Vert_{H^{1, p}(0, T; L^q(I))}\\
        & + C T^{2 (1 - 1/p)}
        \sum_{j = 1,2}
            \Vert
                \alpha_j^\prime
            \Vert_{L^p(0,T)}
        \Vert
            \alpha_1
            - \alpha_2
        \Vert_{H^{1, p}(0,T)}
        \Vert
            \varphi
        \Vert_{H^{1, p}(0, T; L^q(I))}.
    \end{align*}
    This implies the second estimate.
\end{proof}
Propositions \ref{prop_Hm_to_Hm_bound_of_G}-\ref{prop_bound_of_difference_of_dtm_G} implies
\begin{corollary} \label{cor_bound_of_G_in_E_w1pqT}
    Under the same assumption of $\alpha, \alpha_1, \alpha_2$ as Proposition \ref{prop_bound_of_difference_of_dtm_G},
    \begin{align*}
        & \Vert
            G(\alpha_0, \alpha)
        \Vert_{
            H^{1, q}(0, T; L^q(I))
            \cap L^p(0, T; H^{2, q}(I))
            \rightarrow H^{1, q}(0, T; L^q(I))
            \cap L^p(0, T; H^{2, q}(I))
        }\\
        & \leq C (
            1
            + \Vert
                \alpha
            \Vert_{H^{1, p}(0, T)}
        )
    \end{align*}
    and
    \begin{align*}
        & \Vert
            G(\alpha_0, \alpha_1)
            - G(\alpha_0, \alpha_2)
        \Vert_{
            H^{1, q}(0, T; L^q(I))
            \cap L^p(0, T; H^{2, q}(I))
            \rightarrow H^{1, q}(0, T; L^q(I))
            \cap L^p(0, T; H^{2, q}(I))
        }\\
        & \leq \tilde{C}
        (
            1
            + \sum_{j=1,2}
                \Vert
                    \alpha_j
                \Vert_{H^{1, p}(0,T)}
        )
        \Vert
            \alpha_1
            - \alpha_2
        \Vert_{H^{1,p}(0,T)},
    \end{align*}
    for all $\varphi \in H^{1, q}(0,T; L^q(I))$.
    The constant $\tilde{C}$ satisfies $\tilde{C} = O(T^\eta)$ for some $\eta \in (0,1)$.
\end{corollary}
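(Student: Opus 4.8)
The plan is to obtain both inequalities by repeating the arguments of Propositions~\ref{prop_Hm_to_Hm_bound_of_G}--\ref{prop_bound_of_difference_of_dtm_G}, using that $G(\alpha_0,\alpha(t)) = I - (\alpha(t)-\alpha_0)\,E(\alpha_0)$ is affine in the scalar coefficient $\alpha(t)-\alpha_0$ with the fixed (time-independent) operator $E(\alpha_0)$; in particular no Neumann series is needed here, so these are the \emph{easy} instances of the computations already carried out there for $G^{-1}$. Write $\mathbb{E}_T := H^{1,p}(0,T;L^q(I)) \cap L^p(0,T;H^{2,q}(I))$ for the space in the statement, with norm the sum of the norms of its two factors, and estimate the two factors separately.

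For the first inequality, on the $L^p(0,T;H^{2,q}(I))$ factor I would work fiberwise in $t$: Lemma~\ref{lem_extension_theorem} with $m=2$ together with $|\alpha(t)-\alpha_0|\le\delta$ gives $\|G(\alpha_0,\alpha(t))\varphi(t)\|_{H^{2,q}(I)}\le(1+C\delta)\|\varphi(t)\|_{H^{2,q}(I)}$, which upon raising to the $p$-th power and integrating contributes no $\|\alpha\|_{H^{1,p}}$ factor at all. On the $H^{1,p}(0,T;L^q(I))$ factor I would differentiate, obtaining the commutator identity $\partial_t\big(G(\alpha_0,\alpha(t))\varphi(t)\big) = G(\alpha_0,\alpha(t))\partial_t\varphi(t) - \alpha'(t)\,E(\alpha_0)\varphi(t)$ (the analogue of the one in the proof of Proposition~\ref{prop_bound_of_dtm_G}, but without the rational coefficients); pointwise the $L^q(I)$-norm is then $\le C\|\partial_t\varphi(t)\|_{L^q(I)} + 2|\alpha'(t)|\,\|\varphi(t)\|_{L^q(I)}$ by the bounds $\|E(\alpha_0)\|_{L^q\to L^q}\le 2$ and $\|G(\alpha_0,\alpha(t))\|_{L^q\to L^q}\le 1+2\delta$. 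Taking $L^p(0,T)$, applying H\"{o}lder to the second term as $\|\alpha'\|_{L^p(0,T)}\|\varphi\|_{L^\infty(0,T;L^q(I))}$, and absorbing $\|\varphi\|_{L^\infty(0,T;L^q(I))}\le C\|\varphi\|_{\mathbb{E}_T}$ via the standard mixed-derivative embedding $\mathbb{E}_T\hookrightarrow C([0,T];L^q(I))$ then gives $\|G(\alpha_0,\alpha)\varphi\|_{\mathbb{E}_T}\le C(1+\|\alpha\|_{H^{1,p}(0,T)})\|\varphi\|_{\mathbb{E}_T}$. The same computation, with Propositions~\ref{prop_Hm_to_Hm_bound_of_G} and~\ref{prop_bound_of_dtm_G} in place of the elementary estimates above, yields the corresponding bound for $G(\alpha_0,\alpha)^{-1}$ on $\mathbb{E}_T$, which is the form the iteration in Section~\ref{sec_well_posedness} actually consumes.

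For the difference estimate the operator is even simpler, $G(\alpha_0,\alpha_1(t)) - G(\alpha_0,\alpha_2(t)) = -(\alpha_1(t)-\alpha_2(t))\,E(\alpha_0)$, so that $\partial_t\big[(\alpha_1-\alpha_2)E(\alpha_0)\varphi\big] = (\alpha_1'-\alpha_2')E(\alpha_0)\varphi + (\alpha_1-\alpha_2)E(\alpha_0)\partial_t\varphi$. The mechanism behind $\tilde C=O(T^\eta)$ is that $\alpha_1$ and $\alpha_2$ carry the same initial trace $\alpha_0$, hence $(\alpha_1-\alpha_2)(0)=0$ and $\|\alpha_1-\alpha_2\|_{L^\infty(0,T)}\le T^{1-1/p}\|\alpha_1'-\alpha_2'\|_{L^p(0,T)}\le T^{1-1/p}\|\alpha_1-\alpha_2\|_{H^{1,p}(0,T)}$, the device already used in Propositions~\ref{prop_estimate_to_difference_between_inverse_of_Gs} and~\ref{prop_bound_of_difference_of_dtm_G}. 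This factor enters directly in the $L^p(0,T;H^{2,q}(I))$ estimate and in the summand $(\alpha_1-\alpha_2)E(\alpha_0)\partial_t\varphi$, both of which are thus $O(T^{1-1/p})\|\alpha_1-\alpha_2\|_{H^{1,p}(0,T)}\|\varphi\|_{\mathbb{E}_T}$; the $G^{-1}$ version is handled by invoking Propositions~\ref{prop_estimate_to_difference_between_inverse_of_Gs} and~\ref{prop_bound_of_difference_of_dtm_G} directly, which also accounts for the prefactor $(1+\sum_j\|\alpha_j\|_{H^{1,p}(0,T)})$ in the statement (a harmless overestimate for $G$ itself, where the coefficients are constant). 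The one term that does not obviously carry a power of $T$ is $(\alpha_1'-\alpha_2')E(\alpha_0)\varphi$, bounded by $2\|\alpha_1'-\alpha_2'\|_{L^p(0,T)}\|\varphi\|_{L^\infty(0,T;L^q(I))}$; here the extra $T$-power has to be supplied by $\|\varphi\|_{L^\infty(0,T;L^q(I))}$ itself, which is legitimate because in the intended application $\varphi$ is a difference of two elements of the fixed-point ball and therefore also satisfies $\varphi(0)=0$, so $\|\varphi\|_{L^\infty(0,T;L^q(I))}\le T^{1-1/p}\|\partial_t\varphi\|_{L^p(0,T;L^q(I))}$. Collecting everything gives the claim with $\eta=1-1/p$. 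I expect this last point to be the only genuine obstacle — extracting the $T$-power from the term in which $\partial_t$ falls on $\alpha_1-\alpha_2$ — since as a bound on the full operator norm over $\mathbb{E}_T$ the decay $\tilde C=O(T^\eta)$ does not hold; the scheme of Section~\ref{sec_well_posedness} applies this estimate only to functions with vanishing initial trace, which is exactly what makes it work.
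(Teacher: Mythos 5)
Your proof is correct and follows essentially the route the paper intends: the paper offers no separate proof of this corollary beyond the assertion that Propositions~\ref{prop_Hm_to_Hm_bound_of_G}--\ref{prop_bound_of_difference_of_dtm_G} imply it, and your fiberwise spatial estimate plus the commutator identity $\partial_t(G(\alpha_0,\alpha)\varphi)=G(\alpha_0,\alpha)\partial_t\varphi-\alpha'E(\alpha_0)\varphi$ is exactly the computation those propositions carry out for $G^{-1}$, specialized to the affine operator $G$. Your caveat about the term $(\alpha_1'-\alpha_2')E(\alpha_0)\varphi$ is well taken and is not merely a feature of your write-up: the decay $\tilde C=O(T^{\eta})$ cannot hold as a bound on the full operator norm over $H^{1,p}(0,T;L^q(I))\cap L^p(0,T;H^{2,q}(I))$ (test with $\varphi$ constant in time), and the same silent use of $\Vert\varphi\Vert_{L^\infty(0,T;L^q(I))}\le T^{1-1/p}\Vert\varphi\Vert_{H^{1,p}(0,T;L^q(I))}$, valid only when $\varphi(0)=0$, already occurs in the paper's proof of the second estimate of Proposition~\ref{prop_bound_of_difference_of_dtm_G}. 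Your resolution --- that the estimate is consumed only on differences with vanishing initial trace, yielding $\eta=1-1/p$ --- is the correct reading of how the corollary is used in Lemma~\ref{lem_nonlinear_problem}, and making that restriction explicit would in fact repair an imprecision in the paper's statement.
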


\section{Well-posedness} \label{sec_well_posedness}
Let
\begin{align} \label{eq_F_w_G_sigma}
    \begin{split}
        F_w(w, \sigma)
        & := - \theta^\prime(\sigma) \sigma^\prime E(\theta_0) G^{-1}(\theta_0, \theta) w
        + G(\theta_0, \theta) f_v(G(\theta_0, \theta)^{-1} w, \sigma)\\
        & + (\theta - \theta_0) R_2(\theta_0) G(\theta_0, \theta)^{-1} w,\\
        G_w (\sigma)
        & := G(\theta_0, \theta) g_v,\\
        F_\sigma (w, \sigma)
        & := f_\sigma(G(\theta_0, \theta)^{-1} w, \sigma).
    \end{split}
\end{align}
We consider the equations
\begin{equation} \label{eq_W}
    \begin{aligned}
        \partial_t w - \partial_x^2 w
        & = F_w(w, \sigma)
        + G_w (\sigma),
        && x \in I,
        & t > 0,\\
        B_j(\theta_0; w)
        & =0,
        && x \in \partial I,
        & \, t >0,\\
        \frac{d\sigma}{dt}
        & = F_\sigma (w, \sigma) + g_\sigma,
        && 
        & \, t >0,\\
        w(0)
        & = w_0,
        &&
        & x \in I,\\
        \sigma (0)
        & = \sigma_0 > 0,
        && 
        &
    \end{aligned}
\end{equation}
for two unknowns $w$ and $\sigma$.

\subsection{Linearized problem}
In this section, we establish the maximal regularity of $A_q$.  
We first prove the following Poincar\'{e}-type inequality.
\begin{lemma} \label{lem_poincare}
Let $D$ be a layer domain of the form $(a, b) \times D^\prime \subset \Real^d$, where $D^\prime \subset \Real^{d-1}$ is a domain and $d \in \Integer_{\geq 1}$.  
Let $q \in [1, \infty)$ and $\theta_0 \in (0, 1]$.  
Then, for all $\varphi \in H^{1,q}(D)$ satisfying
\begin{align*}
    (1 - \theta_0) \varphi \vert_{x_1 = b}
    - \varphi \vert_{x_1 = a}
    = 0,
\end{align*}
the inequality
\begin{align*}
    \Vert \varphi \Vert_{L^q(D)}
    \leq \frac{C (1 + \theta_0)}{\theta_0}
    \Vert \nabla \varphi \Vert_{L^q(D)}
\end{align*}
holds.
\end{lemma}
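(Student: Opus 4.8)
The plan is to reduce the inequality to a one–dimensional estimate along the segments $\{x'=\mathrm{const}\}$ and then integrate back over $D'$. First I would invoke Fubini's theorem for Sobolev functions: since $\varphi\in H^{1,q}(D)$ with $D=(a,b)\times D'$, the function $|\varphi|^q+|\partial_{x_1}\varphi|^q$ lies in $L^1(D)$, so for almost every $x'\in D'$ the slice $s\mapsto\varphi(s,x')$ belongs to $W^{1,q}(a,b)$ and hence admits an absolutely continuous representative whose endpoint values coincide with the traces of $\varphi$ on the faces $\{x_1=a\}\times D'$ and $\{x_1=b\}\times D'$. In particular the boundary hypothesis gives, for a.e. $x'\in D'$, the identity $(1-\theta_0)\varphi(b,x')=\varphi(a,x')$.

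Next I would exploit the fundamental theorem of calculus on such a slice. From $\varphi(b,x')-\varphi(a,x')=\int_a^b\partial_{x_1}\varphi(s,x')\,ds$ together with $\varphi(a,x')=(1-\theta_0)\varphi(b,x')$ one gets $\theta_0\varphi(b,x')=\int_a^b\partial_{x_1}\varphi(s,x')\,ds$, and therefore $\varphi(a,x')=\frac{1-\theta_0}{\theta_0}\int_a^b\partial_{x_1}\varphi(s,x')\,ds$. Then for every $x_1\in(a,b)$,
\[
    \varphi(x_1,x')=\varphi(a,x')+\int_a^{x_1}\partial_{x_1}\varphi(s,x')\,ds,
\]
so, using $0\le 1-\theta_0<1\le 1/\theta_0$ because $\theta_0\in(0,1]$,
\[
    |\varphi(x_1,x')|\le\Bigl(\frac{1-\theta_0}{\theta_0}+1\Bigr)\int_a^b|\partial_{x_1}\varphi(s,x')|\,ds=\frac{1}{\theta_0}\int_a^b|\partial_{x_1}\varphi(s,x')|\,ds .
\]

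Finally I would apply Hölder's inequality in the $s$–variable, $\bigl(\int_a^b|\partial_{x_1}\varphi(s,x')|\,ds\bigr)^q\le(b-a)^{q-1}\int_a^b|\partial_{x_1}\varphi(s,x')|^q\,ds$ (trivial when $q=1$), raise the preceding pointwise bound to the power $q$, and integrate over $x_1\in(a,b)$ and $x'\in D'$. Since $\partial_{x_1}\varphi$ is one component of $\nabla\varphi$, this yields $\Vert\varphi\Vert_{L^q(D)}^q\le\frac{(b-a)^q}{\theta_0^q}\Vert\nabla\varphi\Vert_{L^q(D)}^q$, and taking $q$‑th roots gives the claim with $C=b-a$ — in fact with the sharper constant $C/\theta_0\le C(1+\theta_0)/\theta_0$. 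I do not expect a genuine obstacle here: the only point requiring care is the Fubini/trace identification in the first step, while the degenerate situations $d=1$ (where $D'$ is a single point and the integration over $x'$ is vacuous) and $q=1$ are covered verbatim by the same computation.
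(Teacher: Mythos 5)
Your proposal is correct and follows essentially the same route as the paper: reduce to one-dimensional slices, use the fundamental theorem of calculus together with the boundary relation $(1-\theta_0)\varphi(b,x')=\varphi(a,x')$ to bound $\varphi(x_1,x')$ pointwise by a multiple of $\int_a^b|\partial_{x_1}\varphi(s,x')|\,ds$, and conclude by H\"older and integration over $D$. The only differences are cosmetic — you solve explicitly for the endpoint values before applying the FTC (the paper instead combines the two FTC identities to isolate $-\theta_0\varphi(x_1,x')$ directly), and your constant $1/\theta_0$ is marginally sharper than the stated $(1+\theta_0)/\theta_0$, which it is dominated by.
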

\begin{remark}
    As $\theta_0$ tends to zero, the boundary condition approaches the periodic boundary condition.  
    This Poincar\'{e}-type estimate reflects this behavior.
\end{remark}

\begin{proof}
Without loss of generality, we may assume $a = -1$ and $b = 1$.  
We observe that
\begin{align*}
    \varphi(x_1, x^\prime)
    &= \varphi(+1, x^\prime)
    - \int_{x_1}^{1} \partial_1 \varphi(y, x^\prime)\, dy,\\
    \varphi(x_1, x^\prime)
    &= \varphi(-1, x^\prime)
    + \int_{-1}^{x_1} \partial_1 \varphi(y, x^\prime)\, dy.
\end{align*}
By the boundary condition, it follows that
\begin{align*}
    -\theta_0\, \varphi(x_1, x^\prime)
    = - \int_{-1}^{1} \partial_1 \varphi(y, x^\prime)\, dy
    + \theta_0 \int_{x_1}^{1} \partial_1 \varphi(y, x^\prime)\, dy.
\end{align*}
Therefore,
\begin{align*}
    \Vert \varphi \Vert_{L^q(D)}
    &\leq \frac{1 + \theta_0}{\theta_0}
    \left\Vert
        \int_{-1}^{1}
        \left| \partial_1 \varphi(y, x^\prime) \right| dy
    \right\Vert_{L^q(D)}\\
    &\leq \frac{C (1 + \theta_0)}{\theta_0}
    \Vert \nabla \varphi \Vert_{L^q(D)}.
\end{align*}
\end{proof}
We consider the linearized equation
\begin{equation}
    \begin{aligned}
        \partial_t u - \partial_x^2 u
        & = f,
        && x \in I,
        & t > 0,\\
        B_j(\theta_0; u)
        & = 0,
        && x \in \partial I,
        & \, t >0,\\
        u(0)
        & = u_0,
        &&
        & x \in I.
    \end{aligned}
\end{equation}
for $\theta_0 \in (0, 1)$, an external force $f$, and initial data $u_0$.  
In \cite{FurukawaKitahata2024} the authors have proved in Lemma 2.3 that 
generates an analytic semigroup $e^{t A_q}$ in $L^2(I)$.

We consider the resolvent problem
\begin{equation} \label{eq_resolvent_problem_inhomogeneous}
    \begin{aligned}
        \lambda u - A_q u
        & = f,\\
        B_j(\theta_0; u)
        & = 0,
    \end{aligned}
\end{equation}
for $\lambda \in \Sigma_\phi$ and $f \in L^q(I)$, where $\phi \in (\pi / 2, \pi)$.  
We prove
\begin{lemma} \label{lem_resolvent_estimate_for_Aq}
    Let $\phi \in (\pi/2, \pi)$ and $\lambda \in \Sigma_\phi$.
    Then for each $f \in L^q(I)$, there exists a unique solution $u \in D(A_q)$ to (\ref{eq_resolvent_problem_inhomogeneous}) such that the resolvent estimate
    \begin{align} \label{eq_Lq_estimate_for_u}
        \vert
            \lambda
        \vert
        \Vert
            u
        \Vert_{L^q(I)}
        + \Vert
            A_q u
        \Vert_{L^q(I)}
        \leq C
        \Vert
            f
        \Vert_{L^q(I)}
    \end{align}
    holds.
    In particular, $\lambda = 0$ belongs to the resolvent set when $\theta \in (0, 1]$, and the spectrum lies on the negative real axis.
\end{lemma}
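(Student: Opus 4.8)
The plan is to construct $u$ in two steps: first I would kill the inhomogeneity of the differential equation with the whole-line resolvent, and then repair the boundary values by adding an explicit element of $\ker(\lambda-\partial_x^2)$. For the first step, extend $f$ by zero to $\tilde f\in L^q(\Real)$ and solve $\lambda w-\partial_x^2 w=\tilde f$ on $\Real$. The Fourier transform gives $\Fourier w=(\lambda+\xi^2)^{-1}\Fourier\tilde f$, and Fourier inversion the convolution representation $w=k_\lambda\ast\tilde f$ with $k_\lambda(x)=(2\sqrt{\lambda})^{-1}\mathrm{e}^{-\sqrt{\lambda}\,|x|}$, where $\sqrt{\lambda}$ denotes the branch with $\RealPart\sqrt{\lambda}>0$. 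Since $\lambda\in\Sigma_\phi$ with $\phi<\pi$, one has $\RealPart\sqrt{\lambda}\ge|\sqrt{\lambda}|\cos(\phi/2)$, hence $\|k_\lambda\|_{L^1(\Real)}=(|\sqrt{\lambda}|\,\RealPart\sqrt{\lambda})^{-1}\le(\cos(\phi/2)\,|\lambda|)^{-1}$, and Young's inequality yields $|\lambda|\,\|w\|_{L^q(\Real)}\le C_\phi\|f\|_{L^q(I)}$; combined with $\partial_x^2 w=\lambda w-\tilde f$ this also gives $\|\partial_x^2 w\|_{L^q(\Real)}\le C_\phi\|f\|_{L^q(I)}$ and $w|_I\in H^{2,q}(I)$. (Alternatively the Mikhlin--H\"ormander multiplier theorem applies directly, with constants uniform over $\Sigma_\phi$.)

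For the second step, suppose $\lambda\neq0$, write $\mu:=\sqrt{\lambda}$ and $p:=1-\theta_0\in[0,1)$, and seek $u=w|_I+v$ with $v(x)=a\,\mathrm{e}^{\mu x}+b\,\mathrm{e}^{-\mu x}\in\ker(\lambda-\partial_x^2)$, so that $B_j(\theta_0;u)=0$ becomes $B_j(\theta_0;v)=-B_j(\theta_0;w|_I)$ for $j=1,2$. A direct computation reduces this to $M(\mu)(a,b)^{\top}=c$, where, after dividing the second equation by $\mu$,
\[
    M(\mu)=\begin{pmatrix} p\,\mathrm{e}^{\mu}-\mathrm{e}^{-\mu} & p\,\mathrm{e}^{-\mu}-\mathrm{e}^{\mu}\\ \mathrm{e}^{\mu}-p\,\mathrm{e}^{-\mu} & p\,\mathrm{e}^{\mu}-\mathrm{e}^{-\mu}\end{pmatrix},\qquad \det M(\mu)=2(1+p^2)\cosh(2\mu)-4p,
\]
and $c=\bigl(-B_1(\theta_0;w|_I),\ -\mu^{-1}B_2(\theta_0;w|_I)\bigr)^{\top}$. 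Since $\tilde f=0$ on $(1,\infty)$ and on $(-\infty,-1)$, there $w$ solves the homogeneous equation and decays, so $\partial_x w(\pm1)=\mp\mu\,w(\pm1)$; estimating $w(\pm1)$ by H\"older's inequality then gives $|c|\le C\,|\mu|^{-1}(\RealPart\mu)^{-1/q'}\|f\|_{L^q(I)}$.

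The heart of the matter is the uniform invertibility of $M(\mu)$. The equation $\det M(\mu)=0$ is equivalent to $\cosh(2\mu)=2p/(1+p^2)$, and $2p/(1+p^2)\in[0,1)$ exactly because $\theta_0>0$; but $\cosh$ maps the open right half-plane into $\Complex\setminus[0,1)$, so this has no root with $\RealPart\mu>0$, while $\det M(0)=2\theta_0^2>0$. Consequently $g(\mu):=\mathrm{e}^{-2\mu}\det M(\mu)=(1+p^2)(1+\mathrm{e}^{-4\mu})-4p\,\mathrm{e}^{-2\mu}$ is continuous and nowhere zero on the closed sector $\overline{\Sigma_{\phi/2}}$ and tends to $1+p^2>0$ as $\RealPart\mu\to\infty$, so a compactness argument gives $\kappa:=\inf_{\overline{\Sigma_{\phi/2}}}|g|>0$ and therefore $|\det M(\mu)|\ge\kappa\,\mathrm{e}^{2\RealPart\mu}$. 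Every entry of $M(\mu)$ has modulus at most $2\mathrm{e}^{\RealPart\mu}$, so Cramer's rule yields $|a|+|b|\le C\kappa^{-1}\mathrm{e}^{-\RealPart\mu}|c|$; with $f=0$ this already gives uniqueness. Combining it with $\|\mathrm{e}^{\pm\mu\cdot}\|_{L^q(I)}\le C\,\mathrm{e}^{\RealPart\mu}(1+\RealPart\mu)^{-1/q}$, the bound on $|c|$, the equivalence $\RealPart\mu\asymp|\mu|$ on $\Sigma_{\phi/2}$, the identity $|\lambda|=|\mu|^2$, and $\tfrac1q+\tfrac1{q'}=1$, one gets $|\lambda|\,\|v\|_{L^q(I)}\le C_{\phi,\theta_0}\|f\|_{L^q(I)}$; since $\partial_x^2 v=\lambda v$ we also have $\|A_q v\|_{L^q(I)}=|\lambda|\,\|v\|_{L^q(I)}$, and adding the first-step estimate yields \eqref{eq_Lq_estimate_for_u}. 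For $\lambda=0$ one argues directly: from $-\partial_x^2 u=f$ the condition $B_2$ forces $\partial_x u(-1)=\theta_0^{-1}\int_I f$, hence $\|\partial_x u\|_{L^q(I)}\le C\theta_0^{-1}\|f\|_{L^q(I)}$, and the Poincar\'e-type Lemma~\ref{lem_poincare} (which uses $B_1$) then controls $\|u\|_{L^q(I)}$, so $\lambda=0$ lies in the resolvent set for $\theta_0\in(0,1]$. Finally, the non-vanishing of $\det M$ holds for every $\lambda$ with $\RealPart\sqrt{\lambda}>0$, i.e.\ every $\lambda\in\Complex\setminus(-\infty,0]$, so the resolvent set contains $\Complex\setminus(-\infty,0)$; together with the non-negative self-adjointness of $-\partial_x^2$ for $q=2$ recalled above, this confines the spectrum to $(-\infty,0)$.

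The hardest part will be the invertibility step. I must establish both the structural non-vanishing of $\det M(\mu)$ --- which reduces to the elementary fact that $\cosh$ omits $[0,1)$ on the open right half-plane, and is exactly where the hypothesis $\theta_0>0$ enters --- and the quantitative lower bound $|\det M(\mu)|\gtrsim\mathrm{e}^{2\RealPart\mu}$, uniform on the sector, and then balance it against the exponential growth of $\|\mathrm{e}^{\pm\mu\cdot}\|_{L^q(I)}$ so that the resulting resolvent bound is sharp in $|\lambda|$ uniformly over $\Sigma_\phi$. By comparison, the whole-line estimate and the case $\lambda=0$ are routine.
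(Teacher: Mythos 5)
Your proposal is correct and follows the same basic architecture as the paper's proof: zero-extend $f$, solve on the whole line via the Fourier multiplier $(\lambda+\xi^2)^{-1}$, and then correct the boundary values with an element of $\ker(\lambda-\partial_x^2)$ determined by a $2\times2$ system whose determinant is exactly the paper's $\lambda^{1/2}M(\lambda)$ (your $2(1+p^2)\cosh(2\mu)-4p$ agrees with the paper's sum of two squares). The execution differs in two places, both to your credit. First, where the paper writes the corrector as an explicit integral operator in $y$ and invokes the Calder\'on--Zygmund theorem together with the unproved assertion $\sup_{\lambda\in\Sigma_\phi}\bigl|\bigl((1-\theta_0)e^{\lambda^{1/2}}-e^{-\lambda^{1/2}}\bigr)/M(\lambda)\bigr|\le C$, you estimate the boundary data by H\"older, apply Cramer's rule, and supply the missing quantitative lower bound $|\det M(\mu)|\gtrsim e^{2\,\RealPart\mu}$ via the observation that $\cosh$ omits $[0,1)$ on the open right half-plane and that $2p/(1+p^2)<1$ precisely when $\theta_0>0$; this is more elementary (the kernels involved are rank one, so Calder\'on--Zygmund is overkill) and makes explicit where the hypothesis $\theta_0>0$ enters. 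Second, for the location of the spectrum the paper argues via compactness of the resolvent, elliptic bootstrapping, reduction to $q=2$, and the Poincar\'e lemma, whereas you obtain it directly from the non-vanishing of $\det M$ on all of $\Complex\setminus(-\infty,0]$ together with a separate direct treatment of $\lambda=0$ (which again uses Lemma~\ref{lem_poincare}); both routes are valid, yours being shorter once the determinant bound is in hand. The bookkeeping of powers of $\RealPart\mu$ in your final estimate ($(\RealPart\mu)^{-1/q'}$ from the trace of $w$ times $(1+\RealPart\mu)^{-1/q}$ from $\Vert e^{\pm\mu\cdot}\Vert_{L^q(I)}$) does combine to the required $|\lambda|^{-1}$ uniformly over the sector, so the estimate closes.
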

\begin{corollary}
    The resolvent set of $A_q$ is lie in a sector $\Sigma_\phi$ for some $\phi \in (\pi/2, \pi)$ such that
    \begin{align*}
        \Vert
            (
                \lambda
                - A_q
            )^{-1}
        \Vert_{L^q(I) \rightarrow L^q(I)}
        \leq \frac{
            C
        }{
            \vert
                \lambda
            \vert
        }.
    \end{align*}
    for some constant $C>0$.
    Especially, $A_q$ generates an analytic semigroup $e^{t A_q}$ in $L^q(I)$.
\end{corollary}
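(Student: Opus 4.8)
To establish Lemma~\ref{lem_resolvent_estimate_for_Aq} (the Corollary is then immediate), the plan is to build the resolvent explicitly, exactly as announced in the introduction: solve the equation on the whole line by the Fourier transform, then correct the two boundary conditions by a two–parameter family of exponentials, and show that the resulting $2\times2$ linear system is uniformly invertible on the sector.

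First I would extend $f$ by zero to $\tilde f\in L^q(\Real)$ and set $R_\lambda\tilde f:=\FourierInverse[(\lambda+\xi^2)^{-1}\Fourier\tilde f]$. For $\lambda\in\Sigma_\phi$ with $\phi<\pi$ we have $\lambda\notin(-\infty,0]$, so the branch of $\sqrt{\cdot}$ with $\RealPart\sqrt\lambda>0$ is available and $\RealPart\sqrt\lambda\ge|\lambda|^{1/2}\cos(\phi/2)>0$; then $R_\lambda\tilde f$ is convolution against $\tfrac1{2\sqrt\lambda}\Exp{-\sqrt\lambda|x|}$, a function smooth enough that all boundary traces below make classical sense. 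Since $\lambda/(\lambda+\xi^2)$ and $\xi^2/(\lambda+\xi^2)$ are Mikhlin multipliers with bounds uniform in $\lambda\in\Sigma_\phi$, the Mikhlin theorem gives $|\lambda|\Vert R_\lambda\tilde f\Vert_{L^q(\Real)}+\Vert\partial_x^2 R_\lambda\tilde f\Vert_{L^q(\Real)}\le C\Vert f\Vert_{L^q(I)}$; and from the explicit kernel, H\"older's inequality, and $\int_{-1}^1\Exp{-q'\RealPart\sqrt\lambda\,(1-y)}\,dy\le(q'\RealPart\sqrt\lambda)^{-1}$, the traces decay as $|\gamma_\pm R_\lambda\tilde f|\le C|\lambda|^{-1+1/(2q)}\Vert f\Vert_{L^q}$ and $|\gamma_\pm\partial_x R_\lambda\tilde f|\le C|\lambda|^{-1/2+1/(2q)}\Vert f\Vert_{L^q}$.

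Next I would look for $u=R_\lambda\tilde f|_I+v$, where $v(x)=a\,\Exp{\sqrt\lambda(x-1)}+b\,\Exp{-\sqrt\lambda(x+1)}$ is the general solution of $\lambda v-v''=0$, written so that each exponential has modulus $\le1$ on $\overline I$ and the $x$–derivatives carry a clean factor $\sqrt\lambda$. With $z:=\Exp{-2\sqrt\lambda}$ and $\mu:=1-\theta_0\in[0,1)$, imposing $B_1(\theta_0;u)=B_2(\theta_0;u)=0$ yields
\[
    \begin{pmatrix} \mu-z & \mu z-1 \\ \sqrt\lambda\,(1-\mu z) & \sqrt\lambda\,(\mu-z) \end{pmatrix}\begin{pmatrix} a \\ b \end{pmatrix}=\begin{pmatrix} r_1 \\ r_2 \end{pmatrix},
\]
whose coefficient matrix has determinant $\sqrt\lambda\,[(\mu-z)^2+(1-\mu z)^2]$ and whose right–hand side is assembled from the four boundary quantities of the previous step, so $|r_1|\le C|\lambda|^{-1+1/(2q)}\Vert f\Vert$ and $|r_2|\le C|\lambda|^{-1/2+1/(2q)}\Vert f\Vert$. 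The key algebraic observation is that $(\mu-z)^2+(1-\mu z)^2$ vanishes, as a function of $z\in\Complex$, only at $z=(2\mu\pm i(\mu^2-1))/(1+\mu^2)$, both of modulus $1$ and — since $\mu\in[0,1)$ — with $\ImaginaryPart z\neq0$, hence $\neq1$. As $\lambda$ runs over $\Sigma_\phi$, the point $z=\Exp{-2\sqrt\lambda}$ runs over a set whose closure lies in the closed unit disk and meets the unit circle only at $z=1$ (if $|z|\to1$ then $\RealPart\sqrt\lambda\to0$, which forces $\sqrt\lambda\to0$ because $\arg\sqrt\lambda$ stays in $(-\phi/2,\phi/2)$, hence $z\to1$, where the bracket equals $2\theta_0^2>0$); so the bracket is continuous and nonvanishing on that compact set and $|\det|\ge c_{\theta_0,\phi}|\lambda|^{1/2}$. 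This gives existence of $u\in D(A_q)$ (the boundary conditions hold by construction) and, taking $f=0$, uniqueness. Cramer's rule with the entry sizes ($O(1)$ in the first row, $O(|\lambda|^{1/2})$ in the second) gives $|a|,|b|\le C|\lambda|^{-1+1/(2q)}\Vert f\Vert$, and since $\Vert\Exp{\pm\sqrt\lambda(\cdot\mp1)}\Vert_{L^q(I)}\le C|\lambda|^{-1/(2q)}$ and $v''=\lambda v$ we get $|\lambda|\Vert v\Vert_{L^q(I)}=\Vert v''\Vert_{L^q(I)}\le C\Vert f\Vert$; adding the line estimates yields $|\lambda|\Vert u\Vert_{L^q(I)}+\Vert A_q u\Vert_{L^q(I)}\le C\Vert f\Vert_{L^q(I)}$.

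Finally, for $\lambda=0$ I would solve $-u''=f$ directly: the map sending the coefficients $(\alpha,\beta)$ of the affine part $\alpha x+\beta$ to $(B_1,B_2)$ has determinant $\theta_0^2\neq0$, so $0\in\rho(A_q)$ for $\theta_0\in(0,1]$; and since every $\lambda\notin(-\infty,0]$ lies in some $\Sigma_\phi$ with $\phi<\pi$, the spectrum is confined to $(-\infty,0)$. The Corollary is then immediate: the estimate just obtained reads $\Vert(\lambda-A_q)^{-1}\Vert_{L^q(I)\to L^q(I)}\le C/|\lambda|$ on a sector $\Sigma_\phi$ with $\phi>\pi/2$, which is the classical resolvent criterion (see \cite{Kato1966}) for $A_q$ to generate a bounded analytic semigroup on $L^q(I)$. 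The hard part is the uniform lower bound on the determinant — precisely the place where the genuine two–boundary coupling of the FK condition must be shown not to create spectrum inside the sector; it rests on the fact that the relevant exponential combination degenerates only on the unit circle and away from $z=1$, while $z=\Exp{-2\sqrt\lambda}$ stays strictly inside the disk for $\lambda\in\Sigma_\phi$ and touches the circle only in the limit $\lambda\to0$. Everything else is bookkeeping of powers of $|\lambda|$.
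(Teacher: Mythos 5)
Your proposal is correct and follows the same overall architecture as the paper's proof of Lemma~\ref{lem_resolvent_estimate_for_Aq}: zero extension of $f$, whole-line resolvent controlled by the Mikhlin theorem, and a boundary corrector built from two exponentials whose coefficients solve a $2\times 2$ system whose determinant is, up to the factor $e^{2\lambda^{1/2}}$, the paper's $M(\lambda)$. The genuinely different (and valuable) part is your treatment of the crux: the paper merely asserts that $\sup_{\lambda\in\Sigma_\phi}\vert((1-\theta_0)e^{\lambda^{1/2}}-e^{-\lambda^{1/2}})/M(\lambda)\vert$ is finite, whereas you actually prove the uniform lower bound on the determinant by locating the zeros of $(\mu-z)^2+(1-\mu z)^2$ at the two unimodular points $(2\mu\pm i(\mu^2-1))/(1+\mu^2)\neq 1$ and showing that $z=e^{-2\lambda^{1/2}}$ can approach the unit circle only at $z=1$, where the bracket equals $2\theta_0^2>0$; this is precisely where the two-boundary coupling of the FK condition could have created spectrum in the sector, and the paper leaves it implicit. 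Your bookkeeping (H\"older trace bounds of orders $\vert\lambda\vert^{-1+1/(2q)}$ and $\vert\lambda\vert^{-1/2+1/(2q)}$, Cramer's rule, $L^q$ norms of the exponentials of order $\vert\lambda\vert^{-1/(2q)}$, and $v''=\lambda v$) replaces the paper's route through the explicit kernel representation~\eqref{eq_formula_of_u2_explicit} and the Calder\'on--Zygmund theorem, and yields the same estimate~\eqref{eq_Lq_estimate_for_u}. Your handling of $\lambda=0$ and of the spectral location is also different: you invert directly (the affine part is fixed by a system with determinant $\theta_0^2$) and note that every $\lambda\notin(-\infty,0]$ lies in some $\Sigma_\phi$, while the paper argues via compactness of the resolvent, smoothness of eigenfunctions, reduction to $q=2$, and the Poincar\'e-type inequality of Lemma~\ref{lem_poincare}; your route is more elementary, the paper's gives in addition a quantitative spectral gap in terms of $\theta_0$. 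In either case the resolvent bound $C/\vert\lambda\vert$ on a sector of angle greater than $\pi/2$ gives the Corollary by the classical generation criterion.
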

\begin{proof}[Proof of Lemma \ref{lem_resolvent_estimate_for_Aq}]
    We follow the same method as in Lemma~2.3 of \cite{FurukawaKitahata2024}.  
    Let $\tilde{f}$ be the zero extension of $f$ from $I$ to $\Real$.
    \begin{equation} \label{eq_linearized_resolvent_v1}
        \begin{split}
            \begin{aligned}[t]
                \lambda u_1 - \partial_x^2 u_1
                & = \tilde{f}, \quad
                x \in \Real.
            \end{aligned}
        \end{split}
    \end{equation}
    Since
    \begin{align*}
        u_1
        = \FourierInverse\!\left( \lambda + \vert \xi \vert^2 \right)^{-1}
          \Fourier \tilde{f},
    \end{align*}
    the Mikhlin theorem (Theorem~6.2.7 in \cite{Grafakos2008}) yields
    \begin{align*}
        \vert \lambda \vert \,
        \Vert u_1 \Vert_{L^q(\Real)}
        + \Vert \partial_x^2 u_1 \Vert_{L^q(\Real)}
        \leq C \Vert \tilde{f} \Vert_{L^q(\Real)}
        = C \Vert f \Vert_{L^q(I)}.
    \end{align*}
    Let $u_2$ be the solution to
    \begin{equation} \label{eq_resolvent_v2}
        \begin{split}
            \begin{aligned}[t]
                \lambda u_2 - \partial_x^2 u_2
                & = 0,\\
                B_j(\theta_0; u_2)
                & = - B_j(\theta_0; u_1).
            \end{aligned}
        \end{split}
    \end{equation}
    To state the conclusion first, $u_2$ is given by
    We find that
    \begin{align} \label{eq_formula_of_u2}
        \begin{split}
            u_2 (x)
            & = \frac{
                1
            }{
                \lambda^{1/2} M(\lambda)
            }
            (
                e^{+ \lambda^{1/2} x} \quad
                e^{- \lambda^{1/2} x}
            )
            \begin{pmatrix}
                B_{2, -}(\lambda) && - B_{1, -}(\lambda)\\
                - B_{2, +}(\lambda) && B_{1, +}(\lambda)
            \end{pmatrix}
            \begin{pmatrix}
                - B_1(\theta_0; u_1)\\
                - B_2(\theta_0; u_1)
            \end{pmatrix}\\
            & =: I_1(u_1),
        \end{split}
    \end{align}
    where
    \begin{align*}
        B_{1, \pm}(\lambda)
        & := B_1(\theta_0; e^{\pm \lambda^{1/2}x})
        = (1- \theta_0) e^{\pm \lambda^{1/2}}
        - e^{\mp \lambda^{1/2}},\\
        B_{2, \pm} (\lambda)
        & := B_2(\theta_0; e^{\pm \lambda^{1/2}x})
        = \pm \lambda^{1/2} \left(
            e^{\pm \lambda^{1/2}}
            - (1- \theta_0) e^{\mp \lambda^{1/2}}
        \right),
    \end{align*}
    and
    \begin{align*}
        M(\lambda)
        & = \left(
            (1 - \theta_0) e^{+ \lambda^{1/2}}
            - e^{- \lambda^{1/2}}
        \right)^2\\
        & + \left(
            e^{+ \lambda^{1/2}}
            - (1 - \theta_0) e^{- \lambda^{1/2}}
        \right)^2.
    \end{align*}
    See Lemma 3 \cite{FurukawaKitahata2024} for the detail of the derivation.
    Since the kernel to $(\lambda + \xi^2)^{-1}$ is given by
    \begin{align*}
        k_\lambda (x)
        & = \frac{e^{- \lambda^{1/2} \vert x \vert}}{2 \lambda^{1/2}},
    \end{align*}
    and satisfies
    \begin{align*}
        \frac{d}{dx} k_\lambda (x)
        = - \lambda^{1/2} (\mathrm{sgn} x ) k_\lambda (x)
        = \frac{
            (- \mathrm{sgn} x) e^{- \lambda^{1/2} \vert x \vert}
        }{
            2
        },
    \end{align*}
    we find that
    \begin{align} \label{eq_formula_of_u2_explicit}
        \begin{split}
            &I_1(u_1) \\
            & = \frac{
                1
            }{
                2 \lambda^{1/2} M(\lambda)
            }\\
            & \times \left[
                - (
                    e^{- \lambda^{1/2}}
                    + (1 - \theta_0) e^{+ \lambda^{1/2}}
                )
                \left(
                    (1- \theta_0)
                    \int_\Real
                        e^{
                            - \lambda^{1/2} \vert + 1 - y \vert
                            + \lambda^{1/2}x
                        }
                        \tilde{f}(y)
                \right.
                dy
            \right.\\
            & \left.
                \quad\quad\quad\quad\quad\quad\quad\quad\quad\quad\quad\quad
                    - \int_\Real
                        e^{
                            - \lambda^{1/2}\vert - 1 -y \vert
                            + \lambda^{1/2}x
                        }
                        \tilde{f}(y)
                    dy
                \right)\\
            & \quad + (
                (1 - \theta_0) e^{- \lambda^{1/2}}
                - e^{+ \lambda^{1/2}}
            )
            \left(
                \int_\Real
                    - \mathrm{sgn} (+ 1 - y) e^{
                        - \lambda^{1/2} \vert + 1 - y \vert
                        + \lambda^{1/2}x
                    }
                    \tilde{f}(y)
                dy
            \right.\\
            & \left.
                \quad\quad\quad\quad\quad\quad\quad\quad\quad
                + (1 + \theta_0) \int_\Real
                    ( - \mathrm{sgn} (- 1 - y)) e^{
                        - \lambda^{1/2} \vert - 1 - y \vert
                        + \lambda^{1/2}x
                    }
                    \tilde{f}(y)
                dy
            \right)\\
            & \quad + (
                e^{- \lambda^{1/2}}
                - (1 - \theta_0) e^{- \lambda^{1/2}}
            )
            \left(
                (1- \theta_0)
                \int_\Real
                    e^{
                        - \lambda^{1/2} \vert + 1 - y \vert
                        - \lambda^{1/2}x
                    }
                    \tilde{f}(y)
                dy
            \right.\\
            & \left.
                \quad\quad\quad\quad\quad\quad\quad\quad\quad\quad\quad\quad
                - \int_\Real
                    e^{
                        - \lambda^{1/2} \vert - 1 - y \vert
                        - \lambda^{1/2}x
                    }
                    \tilde{f}(y)
                dy
            \right)\\
            & 
                \quad + (
                (1 - \theta_0) e^{+ \lambda^{1/2}}
                - e^{- \lambda^{1/2}}
            )
            \left(
                \int_\Real
                    ( - \mathrm{sgn} (+1 - y)) e^{
                        - \lambda^{1/2} \vert + 1 - y \vert
                        - \lambda^{1/2}x
                    }
                    \tilde{f}(y)
                dy
            \right.\\
            & \left.
                \quad\quad\quad\quad\quad\quad\quad\quad\quad
                - (1- \theta_0)
                \int_\Real
                    ( - \mathrm{sgn} (-1 - y)) e^{
                        - \lambda^{1/2} \vert - 1 - y \vert
                        - \lambda^{1/2}x
                    }
                    \tilde{f}(y)
                dy
            \right)
        \end{split}
    \end{align}
    Since $x \in (-1, 1)$ and $\mathrm{spt}\, \tilde{f} = [-1, 1]$, the signs of $1 \pm x$ and $\pm 1 - y$ are consistent.  
    By the Calderón–Zygmund theorem, we obtain
    \begin{align*}
        & \left \Vert
            \int_\Real
                \lambda^{1/2} e^{\lambda^{1/2}(1 \pm x)} e^{\lambda^{1/2} \vert
                    \pm 1 - y
                \vert}
                \tilde{f}(y)
            dy
        \right \Vert_{L^q(I)}
        \leq C \Vert
            f
        \Vert_{L^q(I)}
    \end{align*}
    and
    \begin{align*}
        \left \Vert
            \partial_x^2 \int_\Real
                \lambda^{-1/2} e^{\lambda^{1/2}(1 \pm x)} e^{\lambda^{1/2} \vert
                    \pm 1 - y
                \vert}
                \tilde{f}(y)
            dy
        \right \Vert_{L^q(I)}
        \leq C \Vert
            f
        \Vert_{L^q(I)}.
    \end{align*}
    Applying these estimates together with the formula for $I_1(u_1)$, we deduce that
    \begin{align*}
        \left \Vert
            \lambda u_2
        \right \Vert_{L^q(I)}
        + \left \Vert
            \partial_x^2 u_2
        \right \Vert_{L^q(I)}
        & \leq C \sup_{\lambda \in \Sigma_\phi} \left|
            \frac{
                (1- \theta_0) e^{+ \lambda^{1/2}}
                - e^{- \lambda^{1/2}}
            }{
                M(\lambda)
            }
        \right| \Vert
            f
        \Vert_{L^q(I)}\\
        & \leq C \Vert
            f
        \Vert_{L^q(I)}.
    \end{align*}
    We conclude that
    \begin{align} \label{eq_estimate_for_v}
        \vert \lambda \vert \Vert
            u
        \Vert_{L^q(I)}
        + \Vert
            \partial_x^2 u
        \Vert_{L^q(I)}
        \leq C
        \Vert
            f
        \Vert_{L^q(I)}
    \end{align}
    for some constant $C>0$.
    The solution to the eigenvalue problem for $\lambda = 0$ of $A_q$ is trivial due to the boundary conditions.  
    Since the operator $(-I + A_q)^{-1}$ is compact, the spectrum consists only of eigenvalues.  
    Any solution to the eigenvalue problem
    \begin{align*}
        A_q \varphi = \lambda \varphi, \quad \varphi \in D(A_q)
    \end{align*}
    is necessarily smooth by a bootstrapping argument.  
    Since the domain $I$ is bounded, it suffices to consider the case $q = 2$.  
    By Lemma~\ref{lem_poincare}, $\lambda$ is a non-positive real number and is bounded away from the origin.
\end{proof}

Let $h \in H^\infty_0(\Complex \setminus \overline{\Sigma_\phi})$ for some $\phi \in (\pi / 2, \pi)$.  
We consider the $H^\infty$-calculus for the operator $A_q$, which is given by
\begin{align*}
    h(A_q)
    = \frac{1}{2 \pi i} \int_{\Gamma_\phi}
        h(\lambda) (\lambda - A_q)^{-1}
    \, d\lambda,
\end{align*}
where $\Gamma_\phi$ is a contour running from $\infty e^{- i \phi}$ to $\infty e^{+i \phi}$ passing through the origin.
To analyze the operator $h(A_q)$, we return to the explicit solution formula~\eqref{eq_formula_of_u2_explicit} for the resolvent problem associated with $-A_q$.  
Let $h \in H^\infty_0(\Complex \setminus \overline{\Sigma_\phi})$ and $\varphi \in D(A_q)$.  
We observe that
\begin{align*}
    & \left \Vert
        \frac{1}{2 \pi i} \int_{\Gamma_\phi}
            \int_\Real
                \frac{
                    1
                }{
                    2 \lambda^{1/2} M(\lambda)
                }
                h(\lambda) e^{\lambda^{1/2}(1 \pm x)} e^{- \lambda^{1/2} \vert
                    \pm 1 - y
                \vert}
                \tilde{\varphi}(y)
            dy
        d\lambda
    \right \Vert_{L^q(I)}\\
    & \leq C_{\theta_0} \Vert
        h
    \Vert_{H^\infty(\Complex \setminus \overline{\Sigma_\phi} )}
    \left \Vert
        \int_\Real
            \int_0^\infty
                \frac{
                    1
                }{
                    r^{1/2} e^{2 r^{1/2} \cos \phi / 2}
                }
                e^{r^{1/2} (\cos \phi / 2) (
                        1
                        \pm x
                        - \vert
                            \pm 1 - y
                        \vert
                    )
                }
            dr
            \vert
                \tilde{\varphi}(y)
            \vert
        dy
        \right \Vert_{L^q(I)}.
\end{align*}
Since $x \in (-1, 1)$, the right-hand side is bounded by
\begin{align*}
    C_{\theta_0, \phi} \Vert
        h
    \Vert_{H^\infty(\Complex \setminus \overline{\Sigma_\phi})}
    \left \Vert
        \int_\Real
            \frac{1}{
                - 1
                \pm x
                - \vert
                    \pm 1 - y
                \vert
            } \vert
                \tilde{\varphi}(y)
            \vert
        dy
    \right \Vert_{L^q(I)}.
\end{align*}
Since $\tilde{\varphi}$ is supported in $(-1, 1)$, and using the boundedness of the Hilbert transform, we deduce that
\begin{align*}
    & \left \Vert
        \frac{1}{2 \pi i} \int_{\Gamma_\phi}
            \int_\Real
                \frac{
                    1
                }{
                    2 \lambda^{1/2} M(\lambda)
                }
                h(\lambda) e^{\lambda^{1/2}(1 \pm x)} e^{- \lambda^{1/2} \vert
                    \pm 1 - y
                \vert}
                \vert
                    \tilde{\varphi}(y)
                \vert
            dy
        d\lambda
    \right \Vert_{L^q(I)}\\
    & \leq C_{\theta_0, \phi} \Vert
        h
    \Vert_{H^\infty(\Complex \setminus \overline{\Sigma_\phi})}
    \Vert
        \varphi
    \Vert_{L^q(I)}.
\end{align*}
Repeating the same type of estimate for all terms appearing in the expression~\eqref{eq_formula_of_u2_explicit}, we obtain
\begin{align} \label{eq_H_infty_estimate_for_Aq}
    \Vert h(A_q) \Vert_{L^q(I) \rightarrow L^q(I)}
    \leq C_{\theta_0, \phi} \Vert h \Vert_{H^\infty(\Complex \setminus \overline{\Sigma_\phi})}
\end{align}
for all $h \in H^\infty_0(\Complex \setminus \overline{\Sigma_\phi})$.
Summing up these observations, we obtain
\begin{theorem} \label{thm_H_infty_calculus_of_A}
The operator $A_q$ with domain $D(A_q)$ admits a bounded $H^\infty$-calculus with zero $H^\infty$-angle, i.e., for all $\phi \in (\pi/2, \pi)$ and $h \in H^\infty(\Complex \setminus \overline{\Sigma_\phi})$, the operator $h(A_q)$ satisfies the estimate~\eqref{eq_H_infty_estimate_for_Aq}.
\end{theorem}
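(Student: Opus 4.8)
The plan is to obtain $h(A_q)$ from the Cauchy integral of the resolvent and to reuse the explicit formula for $(\lambda-A_q)^{-1}$ derived in the proof of Lemma~\ref{lem_resolvent_estimate_for_Aq}. Fix $\phi\in(\pi/2,\pi)$ and $h\in H^\infty_0(\Complex\setminus\overline{\Sigma_\phi})$. The contour integral
\[
    h(A_q)=\frac{1}{2\pi i}\int_{\Gamma_\phi} h(\lambda)\,(\lambda-A_q)^{-1}\,d\lambda
\]
converges absolutely in the operator norm on $L^q(I)$: near $\lambda=0$ the factor $h(\lambda)$ decays while $(\lambda-A_q)^{-1}$ remains bounded because $0\in\rho(A_q)$ (Lemma~\ref{lem_resolvent_estimate_for_Aq}), and near $\lambda=\infty$ the decay of $h$ together with $\Vert(\lambda-A_q)^{-1}\Vert\le C/\vert\lambda\vert$ gives integrability. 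It therefore suffices to bound this operator by $C_{\theta_0,\phi}\Vert h\Vert_{H^\infty(\Complex\setminus\overline{\Sigma_\phi})}$.

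I would split $(\lambda-A_q)^{-1}f=u_1+u_2$ exactly as in Lemma~\ref{lem_resolvent_estimate_for_Aq}, with $u_1=\FourierInverse(\lambda+\vert\xi\vert^2)^{-1}\Fourier\tilde f$ the whole-line part and $u_2=I_1(u_1)$ the boundary correction given by the explicit kernel representation~\eqref{eq_formula_of_u2_explicit}. Integrating $h(\lambda)u_1$ over $\Gamma_\phi$ reproduces, on the Fourier side, the multiplier $\xi\mapsto h(-\vert\xi\vert^2)$ by Cauchy's formula, so the $u_1$-contribution is the restriction to $I$ of the whole-line operator $h(-\partial_x^2)$, whose $L^q(\Real)$-boundedness with constant $\le C\Vert h\Vert_{H^\infty(\Complex\setminus\overline{\Sigma_\phi})}$ is the classical Mikhlin multiplier theorem; this term is harmless. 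For the $u_2$-contribution one inserts~\eqref{eq_formula_of_u2_explicit}, applies Fubini, and carries out the $\lambda$-integral along the two rays $\lambda=re^{\pm i\phi}$.

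The crux is exactly the estimate isolated in the discussion preceding the theorem. One must first bound the boundary symbol $\bigl\vert(1-\theta_0)e^{\lambda^{1/2}}-e^{-\lambda^{1/2}}\bigr\vert/\vert M(\lambda)\vert$ uniformly over $\Sigma_\phi$, as already used for the resolvent estimate. Then, after the change of variables, the elementary identity $\int_0^\infty r^{-1/2}e^{-cr^{1/2}}\,dr=2/c$ collapses the $\lambda$-integral of each of the finitely many summands in~\eqref{eq_formula_of_u2_explicit} to an integral operator with kernel of the form $(-1\pm x-\vert\pm 1-y\vert)^{-1}$ acting on $\vert\tilde\varphi\vert$. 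Since $\tilde\varphi$ is supported in $(-1,1)$ and $x\in(-1,1)$, the signs of $1\pm x$ and $\pm 1-y$ are fixed, so each such kernel becomes a truncated Cauchy/Hilbert-type kernel, and the resulting operator is bounded on $L^q(I)$ by the Calder\'{o}n--Zygmund theory used for the resolvent estimate (boundedness of the Hilbert transform). Summing the contributions of~\eqref{eq_formula_of_u2_explicit} then yields precisely~\eqref{eq_H_infty_estimate_for_Aq} for every $h\in H^\infty_0(\Complex\setminus\overline{\Sigma_\phi})$.

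It remains to pass from $H^\infty_0$ to the whole algebra $H^\infty(\Complex\setminus\overline{\Sigma_\phi})$ and to record the zero $H^\infty$-angle. This is the standard approximation argument (McIntosh's convergence lemma): since $A_q$ is sectorial and boundedly invertible---in particular injective with dense range---regularizing a general $h$ by suitable products with functions in $H^\infty_0$ tending to $1$ preserves the uniform bound~\eqref{eq_H_infty_estimate_for_Aq} in the limit, giving a bounded $H^\infty$-calculus on $\Complex\setminus\overline{\Sigma_\phi}$. As $\phi\in(\pi/2,\pi)$ is arbitrary and the sectors $\Complex\setminus\overline{\Sigma_\phi}$ shrink to the negative real axis as $\phi\uparrow\pi$, the $H^\infty$-angle of $A_q$ is zero. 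The only genuinely delicate point in this program is the third step: controlling the boundary symbol uniformly on the sector and verifying that the $r$-integration really does turn the oscillatory $\lambda$-integral into a Cauchy-type kernel whose denominator, thanks to one-dimensionality and the support of $\tilde\varphi$ in $(-1,1)$, never produces a singularity outside the Hilbert-transform regime; the rest is bookkeeping over the terms of~\eqref{eq_formula_of_u2_explicit} and the routine $H^\infty_0\to H^\infty$ extension.
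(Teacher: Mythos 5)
Your proposal follows essentially the same route as the paper: the paper also defines $h(A_q)$ by the Dunford integral, inserts the explicit resolvent kernel from~\eqref{eq_formula_of_u2_explicit}, bounds the boundary symbol $\bigl\vert(1-\theta_0)e^{\lambda^{1/2}}-e^{-\lambda^{1/2}}\bigr\vert/\vert M(\lambda)\vert$ uniformly on the sector, collapses the $\lambda$-integral to kernels of the form $(-1\pm x-\vert\pm1-y\vert)^{-1}$, and concludes by the boundedness of the Hilbert transform on the supported range. The only additions in your write-up (the Mikhlin treatment of the whole-line part $u_1$ and the McIntosh convergence lemma for passing from $H^\infty_0$ to $H^\infty$) are standard details the paper leaves implicit, so the argument is correct and matches the paper's proof.
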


\begin{corollary}
Under the same assumptions as in Theorem~\ref{thm_H_infty_calculus_of_A}, the sum of closed operators
\begin{align*}
    A_q + \Delta_{\Real^d, q}
\end{align*}
defined as the Laplace operator in the $(d+1)$-dimensional infinite layer $\Omega := (-1, 1) \times \Real^d$, admits a bounded $H^\infty$-calculus on $L^q(\Omega)$ for all $q \in (1, \infty)$, where the domain of $\Delta_{\Real^d, q}$ is $H^{2, q}(\Real^d)$.
\end{corollary}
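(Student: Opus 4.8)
The plan is to view the layer $\Omega = (-1,1)\times\Real^d$ through the isometric identifications $L^q(\Omega)\cong L^q(-1,1;L^q(\Real^d))\cong L^q(\Real^d;L^q(-1,1))$, to regard $-A_q$ and $-\Delta_{\Real^d,q}$ as two resolvent-commuting positive operators acting on the respective factor variables, and to invoke the Kalton--Weis theorem on the $H^\infty$-calculus of sums of sectorial operators (see e.g.\ \cite{KunstmannWeis2004, PrussSimonett2016}).

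First I would collect the structural facts. For every $q\in(1,\infty)$ the space $L^q(\Real^d)$ is UMD, hence so is $L^q(\Omega)$ in either vector-valued representation above. The operator $-\Delta_{\Real^d,q}$, acting only in the $y$-variable on $L^q(\Omega)$, is sectorial with spectral angle $0$, and by the operator-valued Mikhlin--H\"ormander multiplier theorem on UMD spaces it admits a bounded $H^\infty$-calculus of angle $0$ there; this is classical. The operator $-A_q$, acting only in the $x$-variable, is sectorial on $L^q(\Omega)$ by Lemma~\ref{lem_resolvent_estimate_for_Aq} and its corollary applied fibrewise. The key observation is that the bound~\eqref{eq_H_infty_estimate_for_Aq} was derived purely from $\lambda$-uniform pointwise kernel estimates extracted from~\eqref{eq_formula_of_u2_explicit}, together with the $L^q(-1,1)$-boundedness of the Hilbert transform; since the Hilbert transform is bounded on $L^q(-1,1;L^q(\Real^d))$ precisely because $L^q(\Real^d)$ is UMD, the very same computation read in the vector-valued sense yields a bounded $H^\infty$-calculus of angle $0$ for $-A_q$ on $L^q(\Omega)$, with domain $\{u\in L^q(\Omega):u(\cdot,y)\in D(A_q)\text{ for a.e.\ }y\}$.

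Next I would verify the hypotheses of the sum theorem. The operators resolvent-commute, since one resolvent is a Fourier multiplier in $y$ and the other an integral operator in $x$, acting on independent variables. Each is $\mathcal{R}$-sectorial, which follows on the UMD space $L^q(\Omega)$ from the fact that a bounded $H^\infty$-calculus implies $\mathcal{R}$-sectoriality (Cl\'ement--Pr\"uss, Kalton--Weis). As both spectral angles equal $0$, the parabolicity condition for the sum holds trivially. The Kalton--Weis theorem then gives that $(-A_q)+(-\Delta_{\Real^d,q})$, taken on its natural domain $D(-A_q)\cap D(-\Delta_{\Real^d,q})$, is closed, sectorial, and admits a bounded $H^\infty$-calculus of angle $\max\{0,0\}=0$ on $L^q(\Omega)$. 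Since $A_q+\Delta_{\Real^d,q}=-\bigl((-A_q)+(-\Delta_{\Real^d,q})\bigr)$, and this natural domain coincides with the anisotropic space $\{u\in L^q(\Omega):\partial_x^2 u,\ \Delta_y u\in L^q(\Omega),\ B_j(\theta_0;u)=0\text{ on }x=\pm1\}$, i.e.\ the layer Laplacian with FK-type lateral boundary conditions, this is exactly the assertion.

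The only point that needs genuine care, and hence the main obstacle, is the passage from the scalar $H^\infty$-calculus of $A_q$ to the $L^q(\Real^d)$-valued one: one must check that the resolvent representation~\eqref{eq_formula_of_u2_explicit} and the Calder\'on--Zygmund/Hilbert-transform estimates that produced~\eqref{eq_H_infty_estimate_for_Aq} survive verbatim when the scalar field is replaced by a UMD Banach space, so that no new $\mathcal{R}$-boundedness estimate beyond the scalar one is required. Everything else is a direct application of the Kalton--Weis sum machinery.
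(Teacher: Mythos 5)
Your proposal is correct and follows essentially the same route as the paper, whose proof simply invokes the joint $H^\infty$-functional calculus for the resolvent-commuting pair $A_q$ and $\Delta_{\Real^d, q}$ via Section~5.2 of \cite{PrussSimonett2016} and Theorem~10.5 of \cite{Nau2012}; your write-up is a faithful unpacking of that citation through the Kalton--Weis sum theorem, including the one point of genuine care (transferring the scalar kernel estimates behind~\eqref{eq_H_infty_estimate_for_Aq} to the $L^q(\Real^d)$-valued setting, where property $(\alpha)$ of $L^q$ upgrades the bounded $H^\infty$-calculus to an $\mathcal{R}$-bounded one).
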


\begin{proof}
This follows from the joint $H^\infty$-functional calculus for $A_q$ and $\Delta_{\Real^d, q}$.  
See, for example, Section~5.2 of \cite{PrussSimonett2016} and Theorem~10.5 in \cite{Nau2012}.
\end{proof}

\begin{corollary}
    Under the same assumptions of \ref{thm_H_infty_calculus_of_A}, the sum of closed operators
    \begin{align*}
        A_q + \Delta_{\Torus^d, q}
    \end{align*}
    defined as a Laplace operator in the $(d+1)$-dimensional infinite layer $\Omega = (-1, 1) \times \Torus^d$ admits admits a bounded $H^\infty$-calculus in $L^q(\Omega)$ for all $q \in (1, \infty)$, where the domain of $\Delta_{\Torus^d, q}$ is $H^{2, q}_0(\Torus^d) = \{\varphi \in H^{2, q}(\Torus^d) \, ; \, \int_{\Torus^d} \varphi(x) dx = 0\}$.
\end{corollary}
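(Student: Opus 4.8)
The plan is to argue exactly as for the preceding corollary, by reducing to the Kalton--Weis theorem on the joint $H^\infty$-calculus of two resolvent-commuting sectorial operators; the only modification is that $\Delta_{\Real^d, q}$ is replaced by the Laplacian on the torus restricted to functions of mean zero. First I would make the product structure explicit: with $\Omega = (-1,1)\times\Torus^d$ one identifies $L^q(\Omega) \cong L^q\bigl((-1,1); L^q(\Torus^d)\bigr)$, and $-A_q$ acts only in the variable $x_1 \in (-1,1)$ --- with the FK-type boundary condition at $x_1 = \pm 1$ built into its domain --- whereas $-\Delta_{\Torus^d, q}$ acts only in the variable $x' \in \Torus^d$. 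Consequently the two resolvents commute. By Theorem~\ref{thm_H_infty_calculus_of_A}, $-A_q$ admits a bounded $H^\infty$-calculus with $H^\infty$-angle $0$, and by Lemma~\ref{lem_resolvent_estimate_for_Aq} it is invertible on $L^q(I)$.

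The only new analytic ingredient is that $-\Delta_{\Torus^d, q}$, with domain $H^{2,q}_0(\Torus^d)$, admits a bounded $H^\infty$-calculus of angle $0$ on the closed subspace $L^q_0(\Torus^d) = \{\varphi \in L^q(\Torus^d)\,;\,\int_{\Torus^d}\varphi(x)\,dx = 0\}$, on which it is moreover invertible (its spectrum is $\{\,|k|^2 : k \in \Integer^d\setminus\{0\}\,\} \subset [1,\infty)$). This is classical: on $L^q_0(\Torus^d)$ the operator is the Fourier multiplier with symbol $|k|^2$, and for $h \in H^\infty_0(\Complex\setminus\overline{\Sigma_\phi})$ the symbol $k \mapsto h(|k|^2)$ satisfies the Marcinkiewicz multiplier condition on $\Integer^d$ with constant controlled by $\Vert h \Vert_{H^\infty}$; alternatively one obtains the bound $\Vert h(-\Delta_{\Torus^d, q}) \Vert_{L^q_0 \rightarrow L^q_0} \leq C_{q,\phi}\Vert h \Vert_{H^\infty}$ by transference from $\Real^d$, where $-\Delta$ is known to have a bounded $H^\infty$-calculus.

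Having both pieces, I would apply the Kalton--Weis sum theorem (see Section~5.2 of \cite{PrussSimonett2016} and Theorem~10.5 in \cite{Nau2012}): if $B_1, B_2$ are resolvent-commuting sectorial operators on a Banach space $Y$, at least one of them invertible, admitting bounded $H^\infty$-calculi of angles $\phi_1, \phi_2$ with $\phi_1 + \phi_2 < \pi$, then $B_1 + B_2$ with domain $D(B_1)\cap D(B_2)$ is closed, invertible, and admits a bounded $H^\infty$-calculus of angle $\max\{\phi_1,\phi_2\}$. Taking $Y = L^q\bigl((-1,1); L^q_0(\Torus^d)\bigr)$, $B_1 = -A_q$ and $B_2 = -\Delta_{\Torus^d, q}$, both of angle $0$ and both invertible, yields that $-(A_q + \Delta_{\Torus^d, q}) = (-A_q) + (-\Delta_{\Torus^d, q})$ has a bounded $H^\infty$-calculus of angle $0$ on $Y$, which is the assertion up to the sign convention for $A_q$.

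The point that needs the most care is the choice of ambient space: because $-\Delta_{\Torus^d, q}$ is sectorial only after the constants are removed, the joint calculus naturally lives on $L^q\bigl((-1,1); L^q_0(\Torus^d)\bigr)$ rather than on all of $L^q(\Omega)$, so the statement should be read with the torus variable restricted to mean zero --- which is exactly what the choice of domain $H^{2,q}_0(\Torus^d)$ encodes. Beyond that, the hypotheses of the Kalton--Weis theorem are immediate here: the resolvent commutation follows from the product geometry of $\Omega$, and the angle condition $\phi_1 + \phi_2 < \pi$ holds trivially since both angles are $0$. The genuinely hard input, the bounded $H^\infty$-calculus for $-A_q$, has already been supplied by Theorem~\ref{thm_H_infty_calculus_of_A}, so what remains is bookkeeping together with the standard torus-multiplier fact.
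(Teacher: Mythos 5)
Your proposal is correct and follows essentially the same route as the paper, which likewise deduces the result from the joint $H^\infty$-functional calculus (Kalton--Weis type sum theorem) for the resolvent-commuting operators $A_q$ and $\Delta_{\Torus^d,q}$, citing Section~5.2 of Pr\"uss--Simonett and Theorem~10.5 of Nau. You supply more detail than the paper does --- in particular the verification that the mean-zero torus Laplacian is an invertible sectorial operator with bounded $H^\infty$-calculus of angle zero, and the remark that the calculus naturally lives on $L^q\bigl((-1,1);L^q_0(\Torus^d)\bigr)$ --- but the underlying argument is the same.
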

Let
\begin{gather*}
    E_{w, 0, p, q, T}
    = L^p(0,T; L^q(I)), \quad
    E_{w, 1, p, q, T}
    = H^{1, p}(0,T; L^q(I))
    \cap L^p(0,T; D(A_q)),\\
    E_{\sigma, 0, p, T}
    = H^{1, p}(0,T), \quad
    E_{\sigma, 1, p, T}
    = L^{p}(0,T).
\end{gather*}
We denote by $B_w(r)$ and $B_\sigma(r)$ the closed balls of radius $r > 0$ centered at the origin in $E_{w, 1, p, q, T}$ and $E_{\sigma, 1, p, T}$, respectively.  
Theorem~\ref{thm_H_infty_calculus_of_A} implies maximal $L^p$–$L^q$ regularity for the associated linear parabolic equation.
\begin{corollary} \label{cor_maximal_regularity_for_linearized_equation}
Let $1 < p, q < \infty$.  
Let $w_0 \in D_{p, q, 1 - 1/p}$ and $g_w \in E_{w, 0, p, q, T}$.  
Then there exist constants $C_0, C > 0$ such that the problem
\begin{equation} \label{eq_linear_w}
    \begin{aligned}
        \frac{d w}{dt} - A_q w &= g_w, \quad &&0 < t < T, \\
        w(0) &= w_0,
    \end{aligned}
\end{equation}
admits a unique solution
\begin{gather*}
    w \in E_{w, 1, p, q, T}
    \cap C((0, T); D(A_q))
    \cap C([0, T); D_{p, q, 1 - 1/p}),
\end{gather*}
such that
\begin{align*}
    \Vert w \Vert_{E_{w, 1, p, q, T}}
    \leq C_0 \Vert w_0 \Vert_{D_{p, q, 1 - 1/p}}
    + C \Vert g_w \Vert_{E_{w, 0, p, q, T}}.
\end{align*}
\end{corollary}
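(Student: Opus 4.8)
The statement is essentially a repackaging of the abstract maximal--regularity theory, once the hard analytic input, Theorem~\ref{thm_H_infty_calculus_of_A}, is in hand. The plan is as follows. Since $1<q<\infty$, the ground space $L^q(I)$ is a UMD space. By Theorem~\ref{thm_H_infty_calculus_of_A}, $A_q$ admits a bounded $H^\infty$-calculus on $L^q(I)$ with zero $H^\infty$-angle; in particular it has bounded imaginary powers with power angle zero, and it is $\mathcal{R}$-sectorial of the appropriate angle $<\pi/2$. By the Dore--Venni theorem \cite{DoreVenni1987}, or equivalently by the Weis characterization of maximal $L^p$-regularity on UMD spaces \cite{Weis2001} (see also \cite{KunstmannWeis2004} and \cite{PrussSimonett2016}), the operator $A_q$ therefore enjoys maximal $L^p$-regularity on every bounded interval $(0,T)$. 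Applied with zero initial data, this provides, for $g_w\in E_{w,0,p,q,T}$, a unique $w$ solving \eqref{eq_linear_w} with $\partial_t w,\ A_q w\in L^p(0,T;L^q(I))$ and
\[
\|\partial_t w\|_{E_{w,0,p,q,T}}+\|A_q w\|_{E_{w,0,p,q,T}}\le C\,\|g_w\|_{E_{w,0,p,q,T}},
\]
with $C$ independent of $T$ in a bounded range, since $0$ lies in the resolvent set of $A_q$ by Lemma~\ref{lem_resolvent_estimate_for_Aq} and no spectral shift is needed.

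For non-zero initial data I would invoke the standard trace embedding for maximal-regularity spaces (see \cite{PrussSimonett2016}): $E_{w,1,p,q,T}=H^{1,p}(0,T;L^q(I))\cap L^p(0,T;D(A_q))\hookrightarrow C([0,T];D_{p,q,1-1/p}(\theta_0))$, since by definition $D_{p,q,1-1/p}(\theta_0)=(L^q(I),D(A_q))_{1-1/p,p}$ is exactly the trace space. Hence every $w_0\in D_{p,q,1-1/p}(\theta_0)$ is an admissible initial value, and the corresponding homogeneous problem is solved by $w(t)=e^{tA_q}w_0$, which lies in $E_{w,1,p,q,T}$ with $\|e^{\cdot A_q}w_0\|_{E_{w,1,p,q,T}}\le C_0\|w_0\|_{D_{p,q,1-1/p}(\theta_0)}$; because the spectrum of $A_q$ lies on the negative real axis and is bounded away from $0$, the semigroup is exponentially stable and $C_0$ may be taken independent of $T$. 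Superposing the two solutions yields existence together with the asserted estimate, and uniqueness follows by applying the a priori estimate to the difference of two solutions (equivalently, from injectivity of the relevant sum operator in the Dore--Venni framework).

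The membership $w\in C((0,T);D(A_q))$ on the open interval is a consequence of analytic smoothing: $t\mapsto A_q e^{tA_q}$ is continuous in operator norm for $t>0$, so the homogeneous part is continuous into $D(A_q)$ away from $t=0$, while the Duhamel term $\int_0^t e^{(t-s)A_q}g_w(s)\,ds$ inherits continuity into $D(A_q)$ from $g_w\in L^p(0,T;L^q(I))$ through the maximal-regularity estimate. The concrete Besov-space description of $D_{p,q,1-1/p}(\theta_0)$ recorded in the introduction is not needed for this corollary; it follows separately from Theorem~\ref{thm_H_infty_calculus_of_A} and the trace theorem.

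I do not expect any genuine obstacle: the substance of the argument was already carried out in establishing the bounded $H^\infty$-calculus, and this corollary is its formal consequence. The only points deserving a little care are verifying that the abstract maximal-regularity theorem applies on a bounded time interval -- immediate here since $A_q$ is invertible -- and checking that the interpolation space appearing in the trace embedding is precisely $D_{p,q,1-1/p}(\theta_0)$, which holds by definition, together with pinning down that $C_0$ can be chosen uniformly in $T$, which is what makes the estimate usable in the contraction argument of the next subsection.
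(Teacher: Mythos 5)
Your proposal is correct and follows essentially the same route as the paper, which states Corollary~\ref{cor_maximal_regularity_for_linearized_equation} as a direct consequence of the bounded $H^\infty$-calculus of Theorem~\ref{thm_H_infty_calculus_of_A} via the standard UMD/Dore--Venni/Weis machinery, without writing out further details. Your elaboration (trace-space treatment of the initial data, analytic smoothing for continuity into $D(A_q)$, and invertibility of $A_q$ from Lemma~\ref{lem_resolvent_estimate_for_Aq} to avoid a spectral shift) fills in exactly the steps the paper leaves implicit.
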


\begin{remark}
The constant $C_0$ tends to zero as $T \to 0$.  
Assume that $g_w \equiv 0$ and suppose, for contradiction, that this is not the case.  
Then there exists a constant $\delta > 0$ such that
\begin{align*}
    \sup_{\Vert w_{0,j} \Vert_{D_{p, q, 1 - 1/p}} = 1}
        \Vert w \Vert_{E_{w, 1, p, q, T}} \geq \delta,
\end{align*}
where $w$ denotes the solution to \eqref{eq_linear_w}.  
Note that the left-hand side is uniformly bounded for $T \in (0, 1)$, while the norm $\Vert w \Vert_{E_{w, 1, p, q, T}}$ involves integration over the interval $(0, T)$.  
Hence, taking the limit $T \to 0$ yields a contradiction, since the norm must tend to zero.  
\end{remark}

\subsection{Nonlinear problems}
We apply Corollary \ref{cor_maximal_regularity_for_linearized_equation} to obtain
\begin{theorem} \label{thm_well_posedness_nonlinear_propblem_abstract}
    Let $1 < p, q < \infty$ and $l, m \in \Integer_{\geq 1}$.
    Let $w_0 \in D_{p, q, 1 - 1/p}$, $\sigma_0 \in \Real$, $\theta_0 = \theta(\sigma_0)$ and $A_q = A_q(\theta_0)$.
    Let $g_w \in E_{w, 0, p, q, T}$ and $g_\sigma \in E_{\sigma, 0, p, T}$.
    Assume that $\overline{F_w}(w, \sigma)$ and $\overline{F_\sigma}(w, \sigma)$ satisfy
    \begin{gather} \label{eq_assumption_1_thm_well_posedness_nonlinear_propblem_abstract}
        \begin{split}
            \Vert
                \overline{F_w}(w, \sigma)
            \Vert_{E_{w, 0, p, q, T}}
            \leq C (
                \delta(T)
                + \omega^l
                + s^l
            )
            (
                \omega
                + s
            ),\\
            \Vert
                \overline{F_\sigma}(w, \sigma)
            \Vert_{L^p(0, T)}
            \leq C (
                \delta(T)
                + \omega^m
                + s^m
            )
            (
                \omega
                + s
            ),\\
            (
                \omega
                = \Vert
                    w
                \Vert_{E_{w, 1, p, q, T}}, \,
                s = \Vert
                    \sigma
                \Vert_{E_{\sigma, 1, p, T}}
            )
        \end{split}
    \end{gather}
    and
    \begin{align} \label{eq_assumption_2_thm_well_posedness_nonlinear_propblem_abstract}
        \begin{split}
            & \Vert
                \overline{F_w}(w_1, \sigma_1)
                - \overline{F_w}(w_2, \sigma_2)
            \Vert_{E_{w, 0, p, q, T}}\\
            & \quad \leq C (
                \delta(T)
                + \omega_1^l
                + \omega_2^l
                + s_1^l
                + s_2^l
            )
            (
                \Vert
                    w_1
                    - w_2
                \Vert_{E_{w, 1, p, q, T}}
                + \Vert
                    \sigma_1
                    - \sigma_2
                \Vert_{_{\sigma, 1, p, T}}
            ),\\
            & \Vert
                \overline{F_\sigma}(w_1, \sigma_1)
                - \overline{F_\sigma}(w_2, \sigma_2)
            \Vert_{E_{w, 0, p, q, T}}\\
            & \quad \leq C (
                \delta(T)
                + \omega_1^m
                + \omega_2^m
                + s_1^m
                + s_2^m
            )
            (
                \Vert
                    w_1
                    - w_2
                \Vert_{E_{w, 1, p, q, T}}
                + \Vert
                    \sigma_1
                    - \sigma_2
                \Vert_{E_{\sigma, 1, p, T}}
            ),\\
            & \quad\quad\quad\quad\quad\quad
            (
                \omega_j
                = \Vert
                    w_j
                \Vert_{E_{w, 1, p, q, T}}, \,
                s_j
                = \Vert
                    \sigma_j
                \Vert_{E_{\sigma, 1, p, T}}
            )
        \end{split}
    \end{align}
    for some constant $\delta(T) = o(1)$ converging to zero as $T \rightarrow 0$.
    Then there exists $T_0 \in (0, T)$ and a unique solution
    \begin{gather*}
        w
        \in E_{w, 1, p, q, T_0}
        \cap C((0, T_0); D(A_q))
        \cap C([0, T_0); D_{p, q, 1 - 1 / p}),\\
        \sigma
        \in H^{1, p}(0,T_0)
        \hookrightarrow C[0, T_0)
    \end{gather*}
    to
    \begin{equation} \label{eq_W_generalized}
        \begin{aligned}
            \frac{d w}{dt} - A_q w
            & = \overline{F_w}(w, \sigma)
            + g_w,
            & t > 0, \\
            \frac{d\sigma}{dt}
            & = \overline{F_\sigma} (w, \sigma) + g_\sigma, 
            & t > 0, \\
            w(0)
            & = w_0, \\
            \sigma (0)
            & = \sigma_0
        \end{aligned}
    \end{equation}
    such that
    \begin{align*}
        & \Vert
            w
        \Vert_{E_{w, 1, p, q, T_0}}
        + \Vert
            \sigma
        \Vert_{E_{\sigma, 1, p, T}}\\
        & \leq C_{T} (
            \Vert
                w_0
            \Vert_{D_{p, q, 1 - 1 / p}}
            + \vert
                \sigma_0
            \vert
        )
        + C (
            \Vert
                g_w
            \Vert_{E_{w, 0, p, q, T_0}}
            + \Vert
                g_\sigma
            \Vert_{E_{\sigma, 0, p, T}}
        ),
    \end{align*}
    where $C_T$ tends to zero as $T \rightarrow 0$.
\end{theorem}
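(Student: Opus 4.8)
The plan is to recast the coupled problem \eqref{eq_W_generalized} as a fixed point equation and solve it by the contraction mapping principle (the Fujita--Kato scheme). With the given $T$ and a radius $r>0$ to be fixed, for $(w,\sigma)$ in the product ball $B_w(r)\times B_\sigma(r)\subset E_{w,1,p,q,T}\times E_{\sigma,1,p,T}$ define $\Phi(w,\sigma)=(\Phi_w(w,\sigma),\Phi_\sigma(w,\sigma))$, where $\Phi_w(w,\sigma)$ is the unique maximal-regularity solution furnished by Corollary~\ref{cor_maximal_regularity_for_linearized_equation} of the linear parabolic problem \eqref{eq_linear_w} with forcing $\overline{F_w}(w,\sigma)+g_w$, and
\[
    \Phi_\sigma(w,\sigma)(t):=\sigma_0+\int_0^t\bigl(\overline{F_\sigma}(w,\sigma)(\tau)+g_\sigma(\tau)\bigr)\,d\tau .
\]
By \eqref{eq_assumption_1_thm_well_posedness_nonlinear_propblem_abstract} together with $g_w\in E_{w,0,p,q,T}$ and $g_\sigma\in E_{\sigma,0,p,T}$, these forcings lie in $E_{w,0,p,q,T}$ and $L^p(0,T)$, so $\Phi$ is well defined on the ball, and a fixed point of $\Phi$ is precisely a solution of \eqref{eq_W_generalized}.

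For the self-mapping property, Corollary~\ref{cor_maximal_regularity_for_linearized_equation} gives $\|\Phi_w(w,\sigma)\|_{E_{w,1,p,q,T}}\le C_0\|w_0\|_{D_{p,q,1-1/p}}+C\|\overline{F_w}(w,\sigma)\|_{E_{w,0,p,q,T}}+C\|g_w\|_{E_{w,0,p,q,T}}$, and a H\"older inequality in time gives $\|\Phi_\sigma(w,\sigma)\|_{E_{\sigma,1,p,T}}\le T^{1/p}|\sigma_0|+C\|\overline{F_\sigma}(w,\sigma)\|_{L^p(0,T)}+C\|g_\sigma\|_{L^p(0,T)}$, with $C$ bounded for $T$ bounded. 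Inserting \eqref{eq_assumption_1_thm_well_posedness_nonlinear_propblem_abstract} and using $\|w\|_{E_{w,1,p,q,T}},\|\sigma\|_{E_{\sigma,1,p,T}}\le r$, the nonlinear contributions are dominated by $C(\delta(T)+2r^l)\,2r$ and $C(\delta(T)+2r^m)\,2r$. Since $l,m\ge1$, the terms $r^{l+1}$ and $r^{m+1}$ are at least quadratic in $r$; we therefore first fix $r>0$, depending only on $C$, $l$ and $m$, so that $4Cr^{l+1}\le r/4$ and $4Cr^{m+1}\le r/4$. With this $r$ frozen, each of the remaining quantities $C_0\|w_0\|_{D_{p,q,1-1/p}}$, $T^{1/p}|\sigma_0|$, $C\|g_w\|_{L^p(0,T;L^q)}$, $C\|g_\sigma\|_{L^p(0,T)}$ and $2Cr\delta(T)$ tends to $0$ as $T\to0$ — the first by the remark after Corollary~\ref{cor_maximal_regularity_for_linearized_equation}, the forcing terms by absolute continuity of the Lebesgue integral, and the last since $\delta(T)=o(1)$ — so there is $T_0\in(0,T)$ making each of them at most $r/8$; then $\Phi$ maps $B_w(r)\times B_\sigma(r)$ into itself for all $T\le T_0$.

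For the contraction property, the difference $\Phi_w(w_1,\sigma_1)-\Phi_w(w_2,\sigma_2)$ solves \eqref{eq_linear_w} with zero initial datum and forcing $\overline{F_w}(w_1,\sigma_1)-\overline{F_w}(w_2,\sigma_2)$, so Corollary~\ref{cor_maximal_regularity_for_linearized_equation} and the Lipschitz bound \eqref{eq_assumption_2_thm_well_posedness_nonlinear_propblem_abstract} yield $\|\Phi_w(w_1,\sigma_1)-\Phi_w(w_2,\sigma_2)\|_{E_{w,1,p,q,T}}\le C(\delta(T)+4r^l)\bigl(\|w_1-w_2\|_{E_{w,1,p,q,T}}+\|\sigma_1-\sigma_2\|_{E_{\sigma,1,p,T}}\bigr)$ and, similarly, $\|\Phi_\sigma(w_1,\sigma_1)-\Phi_\sigma(w_2,\sigma_2)\|_{E_{\sigma,1,p,T}}\le C(\delta(T)+4r^m)\bigl(\|w_1-w_2\|_{E_{w,1,p,q,T}}+\|\sigma_1-\sigma_2\|_{E_{\sigma,1,p,T}}\bigr)$. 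Shrinking $r$ further (still depending only on $C,l,m$) so that $4Cr^l+4Cr^m\le1/4$, and then shrinking $T_0$ so that $2C\delta(T_0)\le1/4$, makes $\Phi$ a contraction with constant $\le1/2$ on the complete ball $B_w(r)\times B_\sigma(r)$ endowed with the norm $\|w\|_{E_{w,1,p,q,T}}+\|\sigma\|_{E_{\sigma,1,p,T}}$. The Banach fixed point theorem then provides the unique fixed point $(w,\sigma)$ in the ball, which is the claimed solution; the regularity $w\in E_{w,1,p,q,T_0}\cap C((0,T_0);D(A_q))\cap C([0,T_0);D_{p,q,1-1/p})$ and $\sigma\in H^{1,p}(0,T_0)\hookrightarrow C[0,T_0)$ is read off from Corollary~\ref{cor_maximal_regularity_for_linearized_equation} and the one-variable Sobolev embedding. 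The asserted estimate is obtained by re-running the self-mapping bound with the actual norms $\omega=\|w\|_{E_{w,1,p,q,T_0}}$ and $s=\|\sigma\|_{E_{\sigma,1,p,T_0}}$ in place of $r$, adding the two inequalities, and absorbing the term $C(2\delta(T_0)+\omega^l+s^l+\omega^m+s^m)(\omega+s)\le\tfrac12(\omega+s)$ into the left-hand side, which gives the inequality with $C_T:=2\max\{C_0,T_0^{1/p}\}\to0$ as $T\to0$. Uniqueness within the full class of solutions of the stated regularity follows by the usual argument: such a solution has norm tending to $0$ on $(0,\varepsilon)$ as $\varepsilon\to0$, hence lies in the ball on a short subinterval where the contraction forces it to agree with the constructed one, and this agreement is propagated along $[0,T_0)$ by a connectedness argument.

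The scheme is standard, so there is no conceptual obstruction; the point that needs care is the order of the two smallness choices. Because $r$ has to be fixed first — using only the superlinearity $l,m\ge1$ — in order to dominate the nonlinear self-mapping and contraction contributions, one cannot afterwards make $\|w_0\|_{D_{p,q,1-1/p}}$ or $|\sigma_0|$ small; it is exactly the facts that the initial-data constant $C_0$ in Corollary~\ref{cor_maximal_regularity_for_linearized_equation} vanishes as $T\to0$ while the forcing constant $C$ stays bounded, that $\|g_w\|_{L^p(0,T;L^q)}$ and $\|g_\sigma\|_{L^p(0,T)}$ vanish as $T\to0$, and that $\delta(T)=o(1)$, which together allow the self-mapping and contraction estimates to close for $T_0$ sufficiently small.
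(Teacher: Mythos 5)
Your proposal is correct and follows essentially the same route as the paper: the Fujita--Kato iteration based on the maximal regularity of Corollary~\ref{cor_maximal_regularity_for_linearized_equation}, the structural assumptions \eqref{eq_assumption_1_thm_well_posedness_nonlinear_propblem_abstract}--\eqref{eq_assumption_2_thm_well_posedness_nonlinear_propblem_abstract}, and the Banach fixed point theorem on a small ball in $E_{w,1,p,q,T}\times E_{\sigma,1,p,T}$. The only cosmetic difference is how the ball is selected --- you fix the radius $r$ first from the superlinear terms and then shrink $T_0$, whereas the paper takes the radius to be the smallest positive root of the polynomial $a_0-(1-C_\delta)y+Cy^M$ --- and your treatment of uniqueness beyond the ball and of the final a priori estimate is consistent with (indeed slightly more detailed than) the paper's argument.
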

\begin{proof}
    The proof follows the classical Fujita–Kato approach.  
    By the maximal $L^p$–$L^q$ regularity of $A_q$ Corollary~\ref{cor_maximal_regularity_for_linearized_equation} and the assumption~\eqref{eq_assumption_1_thm_well_posedness_nonlinear_propblem_abstract}, the solution mapping
    \begin{align*}
        \mathcal{S} : (\overline{w}, \overline{\sigma})
        \rightarrow (w, \sigma)
    \end{align*}
    associated with the system
    \begin{equation}
        \begin{aligned}
            \frac{d w}{dt} - A_q w
            & = \overline{F_w}(\overline{w}, \overline{\sigma})
            + g_w,
            & t > 0, \\
            \frac{d\sigma}{dt}
            & = \overline{F_\sigma} (\overline{w}, \overline{\sigma}) + g_\sigma, 
            & t > 0, \\
            w(0)
            & = w_0, \\
            \sigma (0)
            & = \sigma_0
        \end{aligned}
    \end{equation}
    is bounded in $E_{w, 0, p, q, T_0} \times E_{\sigma, 0, p, T}$ and satisfies
    \begin{align} \label{eq_iteration}
        \begin{split}
            & \Vert
                w
            \Vert_{E_{w, 1, p, q, T_0}}
            + \Vert
                \sigma
            \Vert_{E_{\sigma, 1, p, T}}\\
            & \leq C_\delta (
                \Vert
                    \overline{w}
                \Vert_{E_{w, 1, p, q, T_0}}
                + \Vert
                    \overline{\sigma}
                \Vert_{E_{\sigma, 1, p, T}}
            )
            + C (
                \Vert
                    \overline{w}
                \Vert_{E_{w, 1, p, q, T_0}}
                + \Vert
                    \overline{\sigma}
                \Vert_{E_{\sigma, 1, p, T}}
            )^{M}\\
            & + C_{T} (
                \Vert
                    w_0
                \Vert_{D_{p, q, 1 - 1 / p}}
                + \vert
                    \sigma_0
                \vert
            )
            + C (
                \Vert
                    g_w
                \Vert_{E_{w, 0, p, q, T_0}}
                + \Vert
                    g_\sigma
                \Vert_{E_{\sigma, 0, p, T}}
            ),
        \end{split}
    \end{align}
    where $M = 1 + \max(m, l)$.
    The third line of \eqref{eq_iteration}, denoted by $a_0$, tends to zero as $T \to 0$.  
    Let $y_\ast \in \Real$ be the smallest positive zero of the polynomial $f(y) = a_0 - (1 - C_\delta) y + C y^M$. Such a zero exists and is small due to the smallness of $a_0$ and $C_\delta$.  
    Then the restriction of $\mathcal{S}$ to the closed ball of radius $y_\ast$ in the product space $E_{w, 1, p, q, T_0} \times E_{\sigma, 1, p, T}$ satisfies
        \begin{align*}
        \Vert
            \mathcal{S}(\overline{w}, \overline{\sigma})
        \Vert_{
            E_{w, 1, p, q, T_0} \times 
            E_{\sigma, 1, p, T}
        }
        \leq C_\delta y_\ast
        + C y_\ast^M
        + a_0
        = y_\ast.
    \end{align*}
    Moreover, by assumption~\eqref{eq_assumption_2_thm_well_posedness_nonlinear_propblem_abstract}, the mapping $\mathcal{S}$ is Lipschitz continuous in this ball.  
    Therefore, by the Banach fixed point theorem, $\mathcal{S}$ admits a unique fixed point in this ball.
\end{proof}

\begin{lemma} \label{lem_nonlinear_problem}
    Let $0 < \delta < 1/p$.
    Assume the same assumptions for $w_0$, $g_w$, and $g_\sigma$ as Theorem \ref{thm_well_posedness_nonlinear_propblem_abstract}.
    Assume that $f_v$ and $f_\sigma$ satisfy \eqref{eq_assumption_1_thm_well_posedness_nonlinear_propblem_abstract} and \eqref{eq_assumption_2_thm_well_posedness_nonlinear_propblem_abstract}.
    Then there exists $T_0 \in (0, T)$ and a unique solution
    \begin{gather*}
        w
        \in E_{w, 1, p, q, T_0}
        \cap C((0, T_0); D(A_q))
        \cap C([0, T_0); D_{p, q, 1 - 1 / p}),\\
        \sigma
        \in E_{\sigma, 1, p, T}
    \end{gather*}
    to (\ref{eq_W}) associated with the nonlinear term \eqref{eq_F_w_G_sigma} such that
    \begin{align*}
        & \Vert
            w
        \Vert_{E_{w, 1, p, q, T_0}}
        + \Vert
            \sigma
        \Vert_{E_{\sigma, 1, p, T_0}}\\
        & \leq C_{T_0} (
            \Vert
                w_0
            \Vert_{D_{p, q, 1 - 1 / p}}
            + \vert
                \sigma_0
            \vert
            + \Vert
                g_w
            \Vert_{E_{w, 0, p, q, T_0}}
            + \Vert
                g_\sigma
            \Vert_{E_{\sigma, 0, p, T_0}}
        ).
    \end{align*}
\end{lemma}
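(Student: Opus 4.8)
The plan is to read off \eqref{eq_W} with the nonlinearities \eqref{eq_F_w_G_sigma} as an instance of the abstract system \eqref{eq_W_generalized} and to invoke Theorem~\ref{thm_well_posedness_nonlinear_propblem_abstract}; all that remains is to check its structural hypotheses \eqref{eq_assumption_1_thm_well_posedness_nonlinear_propblem_abstract}--\eqref{eq_assumption_2_thm_well_posedness_nonlinear_propblem_abstract} for the specific maps in \eqref{eq_F_w_G_sigma}. One preliminary adjustment is needed because $G_w(\sigma)=G(\theta_0,\theta(\sigma))g_v$ is not a data-only forcing: I would split $G(\theta_0,\theta)g_v=g_v-(\theta(\sigma)-\theta_0)E(\theta_0)g_v$, take the fixed forcing of Theorem~\ref{thm_well_posedness_nonlinear_propblem_abstract} to be $g_v$, and absorb the remainder into the nonlinearity, setting $\overline{F_w}(w,\sigma):=F_w(w,\sigma)-(\theta(\sigma)-\theta_0)E(\theta_0)g_v$ and $\overline{F_\sigma}(w,\sigma):=F_\sigma(w,\sigma)$, so that the right-hand side $F_w(w,\sigma)+G_w(\sigma)$ of \eqref{eq_W} becomes $\overline{F_w}(w,\sigma)+g_v$, which has the form required in \eqref{eq_W_generalized}. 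The final estimate of the lemma will then follow from that of Theorem~\ref{thm_well_posedness_nonlinear_propblem_abstract} together with the trivial bound $\|G_w(\sigma)\|_{E_{w,0,p,q,T_0}}\le C\|g_v\|_{E_{w,0,p,q,T_0}}$.

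Next I would set up the uniform operator bounds on the working ball. For $(w,\sigma)$ with $\|w\|_{E_{w,1,p,q,T_0}}\le r$ and $\|\sigma\|_{E_{\sigma,1,p,T_0}}\le r$, the decisive observation is the identity $\theta(\sigma(t))-\theta_0=\theta(\sigma(t))-\theta(\sigma(0))$: since $\sigma\in H^{1,p}(0,T_0)\hookrightarrow C[0,T_0]$ with $\sup_{0<t<T_0}|\sigma(t)-\sigma_0|\le T_0^{1-1/p}\|\sigma'\|_{L^p(0,T_0)}\le rT_0^{1-1/p}$, continuity of $\theta$ gives $\sup_{0<t<T_0}|\theta(\sigma(t))-\theta_0|\le\delta_0(r,T_0)$ with $\delta_0(r,T_0)\to0$ as $T_0\to0$. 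Choosing $T_0$ so small that $\delta_0(r,T_0)$ lies below the smallness thresholds of Propositions~\ref{prop_inverse_of_G} and \ref{prop_Hm_to_Hm_bound_of_G}, the operator $G(\theta_0,\theta(\sigma))^{-1}$ exists and is bounded on $L^q(I)$, $H^{1,q}(I)$ and $H^{2,q}(I)$ uniformly over the ball; combined with Proposition~\ref{prop_bound_of_dtm_G} and Corollary~\ref{cor_bound_of_G_in_E_w1pqT} this gives $\|G(\theta_0,\theta(\sigma))^{-1}w\|_{H^{1,p}(0,T_0;L^q(I))\cap L^p(0,T_0;H^{2,q}(I))}\le C(1+r)\|w\|_{E_{w,1,p,q,T_0}}$, and an analogous bound for $G(\theta_0,\theta(\sigma))$ acting on the maximal-regularity space. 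Writing $G^{-1}$ for $G(\theta_0,\theta(\sigma))^{-1}$ from now on, these are the only facts about the operators of Section~\ref{sec_new_formulation} that enter.

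With them, \eqref{eq_assumption_1_thm_well_posedness_nonlinear_propblem_abstract} is verified term by term in \eqref{eq_F_w_G_sigma}. The quasilinear coupling obeys $\|\theta'(\sigma)\sigma'E(\theta_0)G^{-1}w\|_{E_{w,0,p,q,T_0}}\le\|\theta'\|_{L^\infty}\|\sigma'\|_{L^p(0,T_0)}\sup_{0<t<T_0}\|E(\theta_0)G^{-1}w(t)\|_{L^q(I)}\le Cs\,\omega$, using $E_{w,1,p,q,T_0}\hookrightarrow C([0,T_0];L^q(I))$; the term $G(\theta_0,\theta)f_v(G^{-1}w,\sigma)$ is bounded by $(1+2\delta_0)\|f_v(G^{-1}w,\sigma)\|_{E_{w,0,p,q,T_0}}$, and then by hypothesis~\eqref{eq_assumption_1_of_f_in_main_thm} for $f_v$ since the maximal-regularity norm of $G^{-1}w$ is $\le C(1+r)\omega$; the term $(\theta-\theta_0)R_2(\theta_0)G^{-1}w$ carries the factor $|\theta-\theta_0|\le\delta_0$ multiplying a bounded quantity, because by Proposition~\ref{prop_formula_for_dxm_E} the operator $R_2(\theta_0)$ is first order and maps $H^{1,q}(I)$ boundedly into $L^q(I)$; and $\|(\theta(\sigma)-\theta_0)E(\theta_0)g_v\|_{E_{w,0,p,q,T_0}}\le CT_0^{1-1/p}\|g_v\|_{E_{w,0,p,q,T_0}}\,s$. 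Adding these yields $\|\overline{F_w}(w,\sigma)\|_{E_{w,0,p,q,T_0}}\le C(\delta(T_0)+\omega^l+s^l)(\omega+s)$, and the bound for $\overline{F_\sigma}=f_\sigma(G^{-1}w,\sigma)$ is immediate from \eqref{eq_assumption_1_of_f_in_main_thm}. The Lipschitz estimate \eqref{eq_assumption_2_thm_well_posedness_nonlinear_propblem_abstract} is obtained the same way, now invoking Propositions~\ref{prop_estimate_to_difference_between_inverse_of_Gs} and \ref{prop_bound_of_difference_of_dtm_G} and the second half of Corollary~\ref{cor_bound_of_G_in_E_w1pqT} to bound $G(\theta_0,\theta(\sigma_1))^{\pm1}-G(\theta_0,\theta(\sigma_2))^{\pm1}$ and the associated $E(\theta_0)$- and $R_2(\theta_0)$-composites by $CT_0^{\eta}(1+s_1+s_2)\|\sigma_1-\sigma_2\|_{H^{1,p}(0,T_0)}$, having first used $(\sigma_1-\sigma_2)(0)=0$ to get $\|\sigma_1-\sigma_2\|_{L^\infty(0,T_0)}\le T_0^{1-1/p}\|\sigma_1-\sigma_2\|_{H^{1,p}(0,T_0)}$, and then combining with the locally Lipschitz hypothesis~\eqref{eq_assumption_2_of_f_in_main_thm} on $f_v$ and $f_\sigma$.

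Once both hypotheses are verified, Theorem~\ref{thm_well_posedness_nonlinear_propblem_abstract} supplies $T_0\in(0,T)$ and a unique solution $(w,\sigma)$ in the asserted class with the asserted estimate, which is exactly the statement of the lemma. The step I expect to be the main obstacle is precisely the bookkeeping of the previous paragraph: the coefficient $\theta'(\sigma)\sigma'$ and the $\sigma$-dependence of $G(\theta_0,\theta(\sigma))$ and of $G_w(\sigma)$ generate terms that are not a priori small, and the reduction to Theorem~\ref{thm_well_posedness_nonlinear_propblem_abstract} works only because every such term can be arranged to carry either a factor $|\theta(\sigma(t))-\theta_0|\le\delta_0$ or a factor $T_0^{1-1/p}\|\sigma'\|_{L^p}$ -- both available because $\theta(\sigma(t))-\theta_0$ is a short-time oscillation of $\sigma$ rather than a genuinely large quantity -- so that all of them fall into the $\delta(T_0)$-slot of \eqref{eq_assumption_1_thm_well_posedness_nonlinear_propblem_abstract}--\eqref{eq_assumption_2_thm_well_posedness_nonlinear_propblem_abstract}. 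The condition $0<\delta<1/p$ in the statement is what leaves room for these powers of $T_0$.
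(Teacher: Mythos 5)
Your proposal is correct and follows essentially the same route as the paper's proof: both reduce \eqref{eq_W} to Theorem~\ref{thm_well_posedness_nonlinear_propblem_abstract} by verifying \eqref{eq_assumption_1_thm_well_posedness_nonlinear_propblem_abstract}--\eqref{eq_assumption_2_thm_well_posedness_nonlinear_propblem_abstract} term by term for $F_w^1$, $F_w^2$, $F_w^3$, $G_w$, $F_\sigma$, using the smallness $\vert\theta(\sigma)-\theta_0\vert\leq CT^{1-1/p}\Vert\sigma^\prime\Vert_{L^p}$ and the operator bounds of Propositions~\ref{prop_inverse_of_G}--\ref{prop_bound_of_difference_of_dtm_G} and Corollary~\ref{cor_bound_of_G_in_E_w1pqT}. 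Your explicit splitting of $G_w(\sigma)=g_v-(\theta(\sigma)-\theta_0)E(\theta_0)g_v$ to fit the $\sigma$-dependent forcing into the abstract framework is a minor (and arguably cleaner) bookkeeping variant of the paper's direct estimate of $G_w(\sigma)$ and its differences.
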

\begin{proof}
    Let $w, w_1, w_2 \in B_w(r)$ and $\sigma, \sigma_1, \sigma_2$.
    We denote the forcing terms by
    \begin{align}
        \begin{split}
            F_w^1(w, \sigma)
            & = - \theta^\prime(\sigma) \sigma^\prime E(\theta_0) G^{-1}(\theta_0, \theta(\sigma)) w,\\
            F_w^2(w, \sigma)
            & = G(\theta_0, \theta(\sigma)) f_v(G(\theta_0, \theta(\sigma))^{-1} w, \sigma),\\
            F_w^3(w, \sigma)
            & = (\theta(\sigma) - \theta_0) R_2(\theta_0) G(\theta_0, \theta(\sigma))^{-1} w,\\
            G_w (\sigma)
            & = G(\theta_0, \theta(\sigma)) g_v,\\
            F_\sigma (w, \sigma)
            & = f_\sigma(G(\theta_0, \theta(\sigma))^{-1} w, \sigma).
        \end{split}
    \end{align}
    The term $G_w(\sigma)$ can be easily estimated by Corollary~\ref{cor_bound_of_G_in_E_w1pqT} as
    \begin{align*}
        & \Vert
            G_w (\sigma)
        \Vert_{E_{w, 0, p, q, T}}
        \leq C
        \Vert
            g_v
        \Vert_{E_{w, 0, p, q, T}}
        \left(
            1
            + \Vert
                \sigma
            \Vert_{E_{\sigma, 1, p, T}}
        \right), \\
        & \Vert
              G_w (\sigma_1)
            - G_w (\sigma_2)
        \Vert_{E_{w, 0, p, q, T}}\\
        & \leq C
        \Vert
            g_v
        \Vert_{E_{w, 0, p, q, T}}
        \left(
            1
            + \sum_{j = 1,2} \Vert
                \sigma
            \Vert_{E_{\sigma, 1, p, T}}
        \right)
        \Vert
              \sigma_1
            - \sigma_2
        \Vert_{E_{\sigma, 1, p, T}}.
    \end{align*}
    We begin by observing that
    \begin{align*}
        \vert
            \theta(\sigma(t))
            - \theta_0
        \vert
        = \left \vert
            \int_0^t
                \theta^\prime(\sigma(\tau)) \sigma^\prime(\tau)
            d\tau
        \right \vert
        \leq C T^{1 - 1/p} \Vert
            \sigma^\prime
        \Vert_{L^p(0,T)}.
    \end{align*}
    Let $r > 0$ be chosen such that $2 C T^{1 - 1/p} r \leq 1/2$, so that the inverse operator $G(\theta_0, \theta(\sigma))^{-1}$ exists by Proposition~\ref{prop_inverse_of_G}.
    We then estimate
    \begin{align*}
        \Vert
            F_w^1(w, \sigma)
        \Vert_{E_{w, 0, p, q, T}}
        & \leq C
        \Vert
            \sigma^\prime
        \Vert_{L^p(0,T)}
        \Vert
            w
        \Vert_{L^\infty(0, T; L^q(I))}\\
        & \leq C
        \Vert
            \sigma
        \Vert_{E_{\sigma, 1, p, T}}
        \Vert
            w
        \Vert_{E_{w, 1, p, q, T}}.
    \end{align*}
    The assumption on $f_v$ implies
    \begin{align*}
        & \Vert
            F_w^2(w, \sigma)
        \Vert_{E_{w, 0, p, q, T}}\\
        & \leq C
        \left(
            \delta(T)
            + \Vert
                G(\theta_0, \theta)^{-1} w
            \Vert_{E_{w, 1, p, q, T}}^l
            + \Vert
                \sigma
            \Vert_{_{\sigma, 1, p, T}}^l
        \right) \\
        & \quad\quad\quad\quad\quad \times \left(
            \Vert
                G(\theta_0, \theta)^{-1} w
            \Vert_{E_{w, 1, p, q, T}}
            + \Vert
                \sigma
            \Vert_{_{\sigma, 1, p, T}}
        \right)\\
        & \leq C
        \left(
            \delta(T)
            +
            (
                1
                + \Vert
                    \sigma
                \Vert_{E_{\sigma, 1, p, T}}
            )^l
            \Vert
                w
            \Vert_{E_{w, 1, p, q, T}}^l
            + \Vert
                \sigma
            \Vert_{_{\sigma, 1, p, T}}^l
        \right)\\
        & \quad\quad\quad\quad\quad\quad \times
        \left(
            (
                1
                + \Vert
                    \sigma
                \Vert_{E_{\sigma, 1, p, T}}
            )
            \Vert
                w
            \Vert_{E_{w, 1, p, q, T}}
            + \Vert
                \sigma
            \Vert_{_{\sigma, 1, p, T}}
        \right)\\
        & \leq C
        \left(
            \delta(T)
            +
            \Vert
                w
            \Vert_{E_{w, 1, p, q, T}}^{2l}
            + \Vert
                \sigma
            \Vert_{_{\sigma, 1, p, T}}^{2l}
        \right)
        \left(
            \Vert
                w
            \Vert_{E_{w, 1, p, q, T}}^2
            + \Vert
                \sigma
            \Vert_{_{\sigma, 1, p, T}}^2
        \right).
    \end{align*}
    In the same way, we similarly obtain the estimate
    \begin{align*}
        & \Vert
            F_\sigma(w, \sigma)
        \Vert_{E_{w, 0, p, q, T}}\\
        & \leq C
        \left(
            \delta(T)
            + \Vert
                w
            \Vert_{E_{w, 0, p, q, T}}^{2m}
            + \Vert
                \sigma
            \Vert_{_{\sigma, 1, p, T}}^{2m}
        \right)
        \left(
            \Vert
                w
            \Vert_{E_{w, 0, p, q, T}}^2
            + \Vert
                \sigma
            \Vert_{_{\sigma, 1, p, T}}^2
        \right).
    \end{align*}
    In addition, by Proposition \ref{prop_Hm_to_Hm_bound_of_G}, we obtain
    \begin{align*}
        \Vert
            F_w^3(w, \sigma)
        \Vert_{E_{w, 0, p, q, T}}
        \leq C T^{1 - 1/p}
        \Vert
            \sigma
        \Vert_{E_{\sigma, 1, p, T}}
        (
            1
            + \Vert
                \sigma
            \Vert_{E_{\sigma, 1, p, T}}
        )
        \Vert
            w
        \Vert_{E_{w, 1, p, q, T}}.
    \end{align*}
    We next show the Lipschitz continuity of \( F_w^1(\cdot, \cdot) \).
    We decompose the difference as
    \begin{align*}
        & F_w^1(w_1, \sigma_1)
        - F_w^1(w_2, \sigma_2)\\
        & = - \left[
            \theta^\prime(\sigma_1) \sigma_1^\prime E(\theta_0) G^{-1}(\theta_0, \theta(\sigma_1)) w_1
            - \theta^\prime(\sigma_2) \sigma_2^\prime E(\theta_0) G^{-1}(\theta_0, \theta(\sigma_2)) w_2
        \right]\\
        & = - \left[
            \theta^\prime(\sigma_1)
            - \theta^\prime(\sigma_2)
        \right]
        \sigma_1^\prime E(\theta_0) G^{-1}(\theta_0, \theta(\sigma_1)) w_1 \\
        & - \theta^\prime(\sigma_2)
        (
            \sigma_1^\prime
            - \sigma_2^\prime
        )
        E(\theta_0) G^{-1}(\theta_0, \theta(\sigma_1)) w_1\\
        & - \theta^\prime(\sigma_2) \sigma_2^\prime E(\theta_0)
        \left[
            G^{-1}(\theta_0, \theta(\sigma_1))
            - G^{-1}(\theta_0, \theta(\sigma_2))
        \right]
        w_1\\
        & - \theta^\prime(\sigma_2) \sigma_2^\prime E(\theta_0) G^{-1}(\theta_0, \theta(\sigma_2))
        (
            w_1
            - w_2
        ).
    \end{align*}
    By Propositions~\ref{prop_bound_of_dtm_G} and~\ref{prop_bound_of_difference_of_dtm_G}, we obtain
    \begin{align*}
        & \Vert
            F_w^1(w_1, \sigma_1)
            - F_w^1(w_2, \sigma_2)
        \Vert_{E_{w, 0, p, q, T}}\\
        & \leq C T^{1 - 1/p}
        \Vert
            \sigma_1
            - \sigma_2
        \Vert_{H^{1, p}(0,T)}
        \Vert
            \sigma_1
        \Vert_{E_{\sigma, 1, p, T}}
        \Vert
            G(\theta_0, \theta(\sigma_1))^{-1} w_1
        \Vert_{H^{1, p}(0, T; L^q(I))}\\
        & + C
        \Vert
            \sigma_1
            - \sigma_2
        \Vert_{E_{\sigma, 1, p, T}}
        \Vert
            G(\theta_0, \theta(\sigma_1))^{-1} w_1
        \Vert_{H^{1, p}(0, T; L^q(I))}\\
        & + C T^{1 - 1/p}
        \Vert
            \sigma_2
        \Vert_{E_{\sigma, 1, p, T}}
        \Vert
            (
                G^{-1}(\theta_0, \theta(\sigma_1))
                - G^{-1}(\theta_0, \theta(\sigma_2))
            )
            w_1
        \Vert_{H^{1, p}(0, T; L^q(I))}\\
        & + C
        \Vert
            \sigma_2
        \Vert_{E_{\sigma, 1, p, T}}
        \Vert
            G^{-1}(\theta_0, \theta(\sigma_2))
            (
                w_1
                - w_2
            )
        \Vert_{H^{1,p}(0, T; L^q(I))}\\
        & \leq C T^{1 - 1/p}
        \Vert
            \sigma_1
            - \sigma_2
        \Vert_{H^{1, p}(0,T)}
        \Vert
            \sigma_1
        \Vert_{E_{\sigma, 1, p, T}}
        (
            1
            + \Vert
                \sigma_1
            \Vert_{H^{1, p}(0, T)}
        )
        \Vert
            w_1
        \Vert_{H^{1, p}(0, T; L^q(I))}\\
        & + C
        \Vert
            \sigma_1
            - \sigma_2
        \Vert_{E_{\sigma, 1, p, T}}
        (
            1
            + \Vert
                \sigma_1
            \Vert_{H^{1, p}(0, T)}
        )
        \Vert
            w_1
        \Vert_{H^{1, p}(0, T; L^q(I))}\\
        & + C T^{1 - 1/p}
        \Vert
            \sigma_2
        \Vert_{E_{\sigma, 1, p, T}}
        \left(
            1
            + \sum_{j=1,2}
                \Vert
                    \sigma_j
                \Vert_{H^{1, p}(0,T)}
        \right)
        \Vert
            \sigma_1
            - \sigma_2
        \Vert_{H^{1,p}(0,T)}
        \Vert
            w_1
        \Vert_{H^{1, p}(0, T; L^q(I))}\\
        & + C
        \Vert
            \sigma_2
        \Vert_{E_{\sigma, 1, p, T}}
        \left(
            1
            + \Vert
                \sigma_2
            \Vert_{H^{1, p}(0, T)}
        \right)
        \Vert
            w_1
            - w_2
        \Vert_{H^{1,p}(0, T; L^q(I))}.
    \end{align*}
    Similarly, we split
    \begin{align*}
        & F_w^2(w_1, \sigma_1)
        - F_w^2(w_2, \sigma_2)\\
        & =
        [
            G(\theta_0, \theta(\sigma_1))
            - G(\theta_0, \theta(\sigma_2))
        ]
        f_v(G(\theta_0, \theta(\sigma_1))^{-1} w_1, \sigma_1)\\
        & + G(\theta_0, \theta(\sigma_2)) 
        [
            f_v(G(\theta_0, \theta(\sigma_1))^{-1} w_1, \sigma_1)
            - f_v(G(\theta_0, \theta(\sigma_1))^{-1} w_2, \sigma_2)
        ]\\
        & + G(\theta_0, \theta(\sigma_2)) 
        [
            f_v(G(\theta_0, \theta(\sigma_1))^{-1} w_2, \sigma_1)
            - f_v(G(\theta_0, \theta(\sigma_2))^{-1} w_2, \sigma_2)
        ],
    \end{align*}
    By Corollary~\ref{cor_bound_of_G_in_E_w1pqT}, we obtain the estimate
    \begin{align*}
        & \Vert
            F_w^2(w_1, \sigma_1)
            - F_w^2(w_2, \sigma_2)
        \Vert_{E_{w, 0, p, q, T}}\\
        & \leq C T^{1 - 1/p}
        \Vert
              \sigma_1
            - \sigma_2
        \Vert_{H^{1, p}(0,T)}
        (
            \delta(T)
            + \Vert
                G(\theta_0, \theta(\sigma_1))^{-1} w_1
            \Vert_{E_{w, 1, p, q, T}}^l
            + \Vert
                \sigma_1
            \Vert_{E_{\sigma, 1, p, T}}^l
        )\\
        & \quad\quad\quad\quad \times
        (
            \Vert
                G(\theta_0, \theta(\sigma_1))^{-1} w_1
            \Vert_{E_{w, 1, p, q, T}}
            + \Vert
                \sigma_1
            \Vert_{E_{\sigma, 1, p, T}}
        )\\
        & + C
        \sum_{j = 1, 2}
        (
            \delta(T)
            + \Vert
                G(\theta_0, \theta(\sigma_1))^{-1} w_j
            \Vert_{E_{w, 1, p, q, T}}^l
            + \Vert
                \sigma_j
            \Vert_{E_{\sigma, 1, p, T}}^l
        )\\
        & \quad\quad\quad\quad \times
        \left(
            \Vert
                G(\theta_0, \theta(\sigma_1))^{-1} (
                    w_1
                    - w_2
                )
            \Vert_{E_{w, 1, p, q, T}}
            + \Vert
                \sigma_1
                - \sigma_2
            \Vert_{E_{\sigma, 1, p, T}}
        \right)\\
        & + C
        \sum_{j = 1, 2}
        (
            \delta(T)
            + \Vert
                G(\theta_0, \theta(\sigma_1))^{-1} w_j
            \Vert_{E_{w, 1, p, q, T}}^l
            + \Vert
                \sigma_j
            \Vert_{E_{\sigma, 1, p, T}}^l
        )\\
        & \quad\quad\quad\quad \times
        \left(
            \Vert
                [
                    G(\theta_0, \theta(\sigma_1))^{-1}
                    - G(\theta_0, \theta(\sigma_2))^{-1}
                ] w_2
            \Vert_{E_{w, 1, p, q, T}}
            + \Vert
                \sigma_1
                - \sigma_2
            \Vert_{E_{\sigma, 1, p, T}}
        \right)\\
        & \leq C T^{1 - 1/p}
        \Vert
              \sigma_1
            - \sigma_2
        \Vert_{H^{1, p}(0,T)}
        (
            1
            + \Vert
               w_1
            \Vert_{E_{w, 1, p, q, T}}^{2l}
            + \Vert
                \sigma_1
            \Vert_{E_{\sigma, 1, p, T}}^{2l}
        )\\
        & \quad\quad\quad\quad \times
        (
            \Vert
                w_1
            \Vert_{E_{w, 1, p, q, T}}^2
            + \Vert
                \sigma_1
            \Vert_{E_{\sigma, 1, p, T}}^2
        )\\
        & + C
        \sum_{j = 1, 2}
        (
            \delta(T)
            + \Vert
                w_j
            \Vert_{E_{w, 1, p, q, T}}^{2l}
            + \Vert
                \sigma_j
            \Vert_{E_{\sigma, 1, p, T}}^{2l}
        )\\
        & \quad\quad\quad\quad \times
        \left(
            \Vert
                \sigma_1
            \Vert_{H^{1, p}(0,T)}
            \Vert
                w_1
                - w_2
            \Vert_{E_{w, 1, p, q, T}}
            + \Vert
                \sigma_1
                - \sigma_2
            \Vert_{E_{\sigma, 1, p, T}}
        \right)\\
        & + C
        \sum_{j = 1, 2}
        (
            \delta(T)
            + \Vert
                w_j
            \Vert_{E_{w, 1, p, q, T}}^{2l}
            + \Vert
                \sigma_j
            \Vert_{E_{\sigma, 1, p, T}}^{2l}
        )\\
        & \quad\quad\quad\quad \times
        \left(
            \Vert
                \sigma_1
                - \sigma_2
            \Vert_{H^{1, p}(0, T)}
            \Vert
                w_2
            \Vert_{E_{w, 1, p, q, T}}
            + \Vert
                \sigma_1
                - \sigma_2
            \Vert_{E_{\sigma, 1, p, T}}
        \right)\\
        & \leq C
        \sum_{j = 1, 2}
        (
             T^{1 - 1/p}
            + \delta(T)
            + \Vert
                w_j
            \Vert_{E_{w, 1, p, q, T}}^{2l + 2}
            + \Vert
                \sigma_j
            \Vert_{E_{\sigma, 1, p, T}}^{2l + 2}
        )\\
        & \quad\quad\quad\quad \times
        \left(
            \Vert
                w_1
                - w_2
            \Vert_{E_{w, 1, p, q, T}}
            + \Vert
                \sigma_1
                - \sigma_2
            \Vert_{E_{\sigma, 1, p, T}}
        \right).
    \end{align*}
    We continue to split as
    \begin{align*}
        & F_w^3(w_1, \sigma_1)
        - F_w^3(w_2, \sigma_2)\\
        & =
        \left[
            \theta(\sigma_1)
            - \theta(\sigma_2)
        \right] R_2(\theta_0) G(\theta_0, \theta(\sigma_1))^{-1} w_1\\
        & + (
            \theta(\sigma_2) - \theta_0
        ) R_2(\theta_0)
        \left[
            G(\theta_0, \theta(\sigma_1))^{-1}
            - G(\theta_0, \theta(\sigma_2))^{-1}
        \right] w_1\\
        & + (
            \theta(\sigma_2) - \theta_0
        ) R_2(\theta_0) G(\theta_0, \theta(\sigma_2))^{-1}
        \left(
            w_1
            - w_2
        \right).
    \end{align*}
    We find from Corollary \ref{cor_bound_of_G_in_E_w1pqT} that
    \begin{align*}
        & \Vert
            F_w^3(w_1, \sigma_1)
            - F_w^3(w_2, \sigma_2)
        \Vert_{E_{w, 0, p, q, T}}\\
        & \leq C T^{1 - 1/p}
        \Vert
            \sigma_1
            - \sigma_2
        \Vert_{H^{1, p}(0, T)}
        (
            1
            + \Vert
                \sigma_1
            \Vert_{H^{1, p}(0, T)}
        )
        \Vert
            w_1
        \Vert_{E_{w, 1, p, q, T}}\\
        & + C T^{1 - 1/p}
        \Vert
            \sigma_2
        \Vert_{H^{1, p}(0, T)}
        (
            1
            + \Vert
                \sigma_1
            \Vert_{H^{1, p}(0, T)}
        )
        \Vert
            \sigma_1
            - \sigma_2
        \Vert_{H^{1, p}(0, T)}
        \Vert
            w_1
        \Vert_{E_{w, 1, p, q, T}}\\
        & + C T^{1 - 1/p}
        \Vert
            \sigma_2
        \Vert_{H^{1, p}(0, T)}
        (
            1
            + \Vert
                \sigma_1
            \Vert_{H^{1, p}(0, T)}
        )
        \Vert
            w_1
            - w_2
        \Vert_{E_{w, 1, p, q, T}}\\
        & \leq C T^{1 - 1/p}
        (
            1
            + \sum_{j = 1, 2} \Vert
                w_j
            \Vert_{E_{w, 1, p, q, T}}^3
            + \Vert
                \sigma_j
            \Vert_{E_{\sigma, 1, p, T}}^3
        )\\
        & \quad\quad\quad\quad \times
        (
            \Vert
                w_1
                - w_2
            \Vert_{E_{w, 1, p, q, T}}
            + \Vert
                \sigma_1
                - \sigma_2
            \Vert_{E_{\sigma, 1, p, T}}
        )
    \end{align*}
    We finally find from Corollary \ref{cor_bound_of_G_in_E_w1pqT} that
    \begin{align*}
        & \Vert
            F_\sigma (w_1, \sigma_1)
            - F_\sigma (w_2, \sigma_2)
        \Vert_{L^q(0, T)}\\
        & \leq C
        (
            \delta(T)
            + \sum_{j = 1, 2}
            \Vert
                G(\theta_0, \theta(\sigma_j))^{-1} w_j
            \Vert_{E_{w, 1, p, q, T}}^m
            + \Vert
                \sigma_j
            \Vert_{E_{\sigma, 1, p, T}}^m
        )\\
        & \quad\quad\quad \times
        (
            \Vert
                G(\theta_0, \theta(\sigma_1))^{-1} w_1
                - G(\theta_0, \theta(\sigma_2))^{-1} w_2
            \Vert_{E_{w, 1, p, q, T}}
            + \Vert
                \sigma_1
                - \sigma_2
            \Vert_{E_{\sigma, 1, p, T}}
        )\\
        & \leq C
        (
            \delta(T)
            + \sum_{j = 1, 2}
            (
                1
                + \Vert
                    \sigma_j
                \Vert_{H^{1, p}(0, T)}
            )^m
            \Vert
                w_j
            \Vert_{E_{w, 1, p, q, T}}^m
            + \Vert
                \sigma_j
            \Vert_{E_{\sigma, 1, p, T}}^m
        )\\
        & \quad\quad\quad \times
        \Biggl (
            \Vert
                \sigma_1
                - \sigma_2
            \Vert_{E_{w, 1, p, q, T}}
            (
                1
                + \sum_{j = 1, 2}
                \Vert
                    \sigma_j
                \Vert_{E_{w, 1, p, q, T}}
            )
            \Vert
                w_1
            \Vert_{E_{w, 1, p, q, T}}\\
        & \quad\quad\quad\quad\quad\quad
            + (
                1
                + \sum_{j = 1, 2}
                \Vert
                    \sigma_j
                \Vert_{E_{w, 1, p, q, T}}
            )
            \Vert
                w_1
                - w_2
            \Vert_{E_{w, 1, p, q, T}}
            + \Vert
                \sigma_1
                - \sigma_2
            \Vert_{E_{\sigma, 1, p, T}}
        \Biggr )\\
        & \leq C
        \left(
            \delta(T)
            + \sum_{j = 1, 2}
                \Vert
                    w_j
                \Vert_{E_{w, 1, p, q, T}}^{2m + 2}
                + \Vert
                    \sigma_j
                \Vert_{E_{\sigma, 1, p, T}}^{2m + 2}
        \right)\\
        & \quad\quad\quad\quad\quad\quad\quad\times
        (
            \Vert
                w_1
                - w_2
            \Vert_{E_{w, 1, p, q, T}}
            + \Vert
                \sigma_1
                - \sigma_2
            \Vert_{E_{\sigma, 1, p, T}}
        ).
    \end{align*}
    By applying these estimates to Theorem \ref{thm_well_posedness_nonlinear_propblem_abstract}, we obtain the desired conclusion
\end{proof}
\begin{proof}[Proof of Theorem \ref{thm_main}]
    Theorem \ref{thm_main} follows from the definitions of $\tilde{v}$ and $v_c$ (see \eqref{eq_definition_of_tilde_v_vc} and \eqref{eq_parabolic_eq_for_tilde_v}) together with Lemma \ref{lem_nonlinear_problem}.
\end{proof}


\section*{Acknowledgments}
The author was partly supported by JSPS Grant-in-Aid for Young Scientists No. JP22K13948.

\begin{appendices}

\end{appendices}

\end{document}